\def\N{\mathbb{N}}
\def\R{\mathbb{R}}
\def\A{\mathbf{A}}
\def\K{\mathbf{K}}
\def\E{\mathbf{E}}
\def\L{\mathbf{L}}
\numberwithin{equation}{section}
\newtheorem{theorem}{Theorem}[section]
\newtheorem{lemma}[theorem]{Lemma}%[section]
\newtheorem{remark}[theorem]{Remark}%[section]
\newtheorem{proposition}[theorem]{Proposition}%[section]
\newtheorem{corollary}[theorem]{Corollary}%[section]
\newtheorem{definition}[theorem]{Definition}
\newcommand*\bigcdot{\mathpalette\bigcdot@{0.7}}
\newcommand*\bigcdot@[2]{\mathbin{\vcenter{\hbox{\scalebox{#2}{$\m@th#1\bullet$}}}}}
\def\bt{\begin{theorem}}
\def\et{\end{theorem}}
\def\be{\begin{equation}}
\def\ee{\end{equation}}
\def\bl{\begin{lemma}}
\def\el{\end{lemma}}
\def\br{\begin{remark}}
\def\er{\end{remark}}
\def\bc{\begin{corollary}}
\def\ec{\end{corollary}}
\def\bd{\begin{definition}}
\def\ed{\end{definition}}
\def\bp{\begin{proposition}}
\def\ep{\end{proposition}}
\def\u{\pmb{u}}
\def\v{\pmb{v}}
\def\det{\mathrm{det}}
\def\dist{\text{dist}}
\def\M{M_{n,m}(\mathbb{R})}
\def\Mn{M_{n,1}(\mathbb{R})}
\def\Mm{M_{1,m}(\mathbb{R})}
\def\x{\pmb{x}}
\def\y{\pmb{y}}
\def\tr{\mathrm{tr}}
\begin{document}
\title{On the $m$th order $p$-affine capacity
\footnote{Keywords: $L_p$ affine Sobolev inequality, $L_p$ projection body,  $(L_p,Q)$-projection body, $m$th order $L_p$ affine Sobolev inequality, $m$th order $p$-affine capacity, $m$th order $p$-integral affine surface area, $p$-affine capacity}}
\author{Xia Zhou and Deping Ye}
\date{}
\maketitle
\begin{abstract} 
Let $M_{n, m}(\mathbb{R})$ denote the space of $n\times m$ real matrices, and   $\mathcal{K}_o^{n,m}$ be the set of convex bodies in $M_{n, m}(\mathbb{R})$ containing the origin.  We develop a theory for the $m$th order $p$-affine capacity $C_{p,Q}(\bigcdot)$ for $p\in[1,n)$ and $Q\in\mathcal{K}_{o}^{1,m}$. Several equivalent definitions for the $m$th order $p$-affine capacity will be provided, and some of its fundamental properties will be proved, including for example,  translation invariance and affine invariance.  We also establish several inequalities related to the $m$th order $p$-affine capacity, including those comparing to  the $p$-variational capacity,  the volume,   the $m$th order $p$-integral affine surface area, as well as the $L_p$ surface area.     

\vskip 2mm Mathematics Subject Classification (2020):  52A40, 52A38,  53A15, 46E30, 46E35, 28A75.

\end{abstract}

\maketitle

\section{Introduction}
The isoperimetric and isocapacity inequalities are fundamental in mathematics, physics, and related fields. In their simplest form (see, e.g., \cite{Gardner-2002-LP-isop-Ine,Ludwig Xiao and Zhang 2011,Lutwak1,Mazya 1985,Mazya 2011,schneider}), these inequalities can be stated as follows:
for $p\in [1, n)$ and for any convex body $K\subset \Mn$ (i.e., a compact convex set with nonempty interior) containing the origin $o$ in its interior, one has
\begin{align}
\label{xiaos-result-2--1}
\left( \frac{V_n(K)}{V_n(B_2^n)} \right)^{\frac{1}{n}} \leq \left( \frac{C_p(K)}{C_p(B_2^n)} \right)^{\frac{1}{n - p}} \leq \left( \frac{S_p(K)}{S_p(B_2^n)} \right)^{\frac{1}{n - p}}.
\end{align}
Hereafter, $\Mn$ denotes the $n$-dimensional Euclidean space, $B_2^n$ refers to the unit Euclidean ball in $\Mn$,  and $V_n(K)$ and $S_p(K)$ are the volume and $L_p$ surface area (see \eqref{lp-surface-area-of-star body}) of $K$, respectively. By $C_p(K)$, we mean the $p$-variational capacity of $K$ (see e.g., Evans and Gariepy \cite{Evans-Gariepy-1992}, and Maz'ya \cite{Mazya 1985,Mazya 2011}), which may be formulated by   
\begin{align*}
C_p(K)=\inf\bigg\{\int_{\Mn}|\nabla f(x)|^p d x:  f \in C_c^{\infty}\ \mathrm{and}\  f \geq 1\ \ \mathrm{on}\  K\bigg\},
\end{align*} where $C_c^{\infty}$ denotes the set of compactly supported, infinitely continuously differentiable functions on $\Mn$, $\nabla f$ is the gradient of $f$, $|y|$ is the Euclidean norm of $y\in \Mn$, and $dx$ denotes the Lebesgue measure on $\Mn$. 
Inequality \eqref{xiaos-result-2--1} is  closely related to the fundamental $L_p$ Sobolev inequality  (see e.g., Federer and Fleming \cite{Federer-p-1} and Maz'ya \cite{Maz'ya-p-large-1} for $p=1$, Aubin \cite{sharp-lp-sobolev-Aubin-p-ge-1} and Talenti \cite{Talenti-p-large-1} for $p\in(1,n)$): for $p\in [1, n)$ and for $f\in C_c^{\infty}$, there exists a constant $C>0$, depending only on $p$ and $n$, such that 
\begin{align}
\label{Lp-sov-11} 
    C\|f\|_{\frac{np}{n - p}}\leq  \|\nabla f\|_{p}, 
\end{align} where $\|f\|_p$ and $\|\nabla f\|_p$ are the $L_p$ norms of $f$ and $\nabla f$, respectively, given by $$\|f\|_p=\Big(\int_{\Mn} |f(x)|^pdx\Big)^{\frac{1}{p}}\ \ \mathrm{and} \ \ \|\nabla f\|_p=\Big(\int_{\Mn} |\nabla f(x)|^pdx\Big)^{\frac{1}{p}}.$$ In particular, the first inequality in \eqref{xiaos-result-2--1} follows from \eqref{Lp-sov-11} by taking the infimum over all $f\in C_c^{\infty}$ such that $f\geq 1$ on $K$. 
Inequality \eqref{Lp-sov-11} also extends to more general functions in the Sobolev space $W^{1, p}$, as defined in Section \ref{notation}. Note that the $p$-variational capacity plays a central role in various  problems in convex geometry and analysis, including Minkowski-type problems for the $p$-variational capacity (see e.g., \cite{Colesanti 2015,Hong-Ye-Zhang-2018,Jerison 1996,Jerison-1996-2,Ji-dual-Minkowski-p-for-p-capacity,Liu-Sheng-flow-Minkowski-problem-q-capacity,Xiong-Xiong-Lu-2019,zou-xiong-2020}), and the Brunn-Minkowski inequalities for the $p$-variational capacity (see e.g., \cite{Borell 1983,Caffarelli 1996,for p capacity 1,Hu 2024,Zou and Xiong 2018}).

While inequality \eqref{xiaos-result-2--1} reveals the relationships between volume, $p$-variational capacity, and $L_p$ surface area, the latter two are not invariant under volume-preserving linear transformations. Consequently, these inequalities may be arguably less powerful compared to their affine invariant counterparts, namely, the affine isoperimetric, isocapacity and Sobolev inequalities (see e.g., \cite{Ludwig Xiao and Zhang 2011,lp proj ineq 1,LYZ-2000-a-new ellipsoid, sharpaffine lp sobol ine,Petty isoperimetric problem,Xiao 2015,Xiao 2016,Zhang 1999}). 
Let $\M$ denote the space of $n\times m$ real matrices. Let $K\subset \Mn$ be a convex body containing $o$ in its interior, and let $f\in C_c^{\infty}$. For convenience of later context, we let 
\begin{align}\label{Phi-p-H}
 \Phi_{p,H}(K) &=\Big(\int_{S^{nm-1}}\Big(\int_{S^{n-1}}H(v^{\mathrm{T}}{}_{\bigcdot} \u)dS_{K,p}(v)\Big)^{-\frac{nm}{p}} d \u\Big)^{-\frac{p}{nm}} \ \ \ \  \mathrm{for}\ \ p\in[1,\infty),\\ \label{E-p-H}
  \mathcal{E}_{p, H}(f) & = \Big(\int_{S^{n m-1}}\Big(\int_{\Mn} H\big((\nabla f(z))^{\mathrm{T}}{}_{\bigcdot} \u\big) d z\Big)^{-\frac{n m}{p}} d \u\Big)^{-\frac{1}{n m}} \ \ \ \   \mathrm{for}\ \ p\in[1,\infty),
\\ \label{C-p-H}
 C_{p,H}(K) & =\inf\Big\{\mathcal{E}_{p,H}^p(f):\ \ f \in C_c^{\infty}\ \mathrm{and}\ \!f \geq 1\ \mathrm{on}\  K\Big\} \ \ \ \  \mathrm{for}\ \ p\in[1,n),
\end{align}
where $H:\Mm\rightarrow [0,\infty)$ is a continuous function, $\Mm$ denotes the $1\times m$ real matrix space, $d S_{K,p}$ is the $L_p$ surface area measure of $K$ (see Section \ref{notation} for details), $d\u$ is the  spherical measure on the unit sphere $S^{nm-1}$ and $v^{\mathrm{T}}{}_{\bigcdot} \u$ denotes the matrix product of the transpose of $v$ and $\u$. If $m=1$ and $H_1(t)=\big(\frac{|t|}{2}\big)^p$ for $t\in\R$, then $v^{\mathrm{T}}{}_{\bigcdot} \u=v\bigcdot \u$  is the inner product of $v,\u\in \Mn,$ and  we get the $p$-integral affine surface area $\Phi_{p}(K)$ associated to projection bodies (Petty \cite{Petty-projection-bodies,Petty isoperimetric problem} for $p=1$, and Lutwak, Yang and Zhang \cite{lp proj ineq 1} for $p\in[1,\infty)$), the $L_p$ affine energy $\mathcal{E}_p(f)$ (Zhang \cite{Zhang 1999} for $p=1$, and Lutwak, Yang and Zhang \cite{sharpaffine lp sobol ine} for $p\in[1,\infty)$), and the $p$-affine capacity $C_{p, 0}(K)$ by Xiao \cite{Xiao 2015,Xiao 2016} for $p\in [1,n)$, respectively:
\begin{align}\label{phi-e-c-0}
\Phi_{p}(K)=\Phi_{p,H_1}(K), \ \ \mathcal{E}_{p}(f)=\mathcal{E}_{p, H_1}(f) \ \ \mathrm{and}\ \ C_{p,0}(K)=C_{p,H_1}(K).
\end{align}
The affine isoperimetric and affine isocapacity inequalities can be stated together as follows: for $p\in [1, n)$ and for any convex body $K\subset \Mn$ containing $o$ in its interior, one has 
\begin{align}
  \label{xiaos-result-1--1}
&\bigg(\frac{V_n(K)}{V_n(B_2^n)}\bigg)^{\frac{1}{n}}
\leq\bigg(\frac{C_{p, 0}(K)}{C_{p, 0}(B_2^n)}\bigg)^{\frac{1}{n-p}} \leq
\bigg(\frac{\Phi_p(K)}{\Phi_p(B_2^n)}\bigg)^{\frac{1}{n-p}}. 
\end{align} 
Associated to the $L_p$ affine energy is the $L_p$ affine Sobolev inequality (see e.g., \cite{sharpaffine lp sobol ine,Zhang 1999}): for $p\in [1, n)$ and $f\in C_c^{\infty}$, there exists a constant $C_{n,p}$, depending only on $n$ and $p$, such that 
\begin{align}
    \label{Lp-aff-sov-11}  
C_{n, p}  \|f\|_{\frac{np}{n - p}} \leq \mathcal{E}_{p}(f), 
\end{align}  which is the affine invariant counterpart of  \eqref{Lp-sov-11} and is closely related to inequality  \eqref{xiaos-result-1--1}. 
For example, the first inequality in \eqref{xiaos-result-1--1} can be obtained from \eqref{Lp-aff-sov-11}  by taking the infimum over $f\in C_c^{\infty}$  such that $f\geq 1$ on $K$. 
We would like to mention that the above inequalities  \eqref{xiaos-result-1--1} and \eqref{Lp-aff-sov-11} may work for other range of $p$ and more general $f$ and $K$.
Note that the $L_p$ affine energy $\mathcal{E}_p(\bigcdot)$ has been the key ingredient for various affine functional inequalities \cite{Cianchi-LYZ-2009,lp affine appl 2,Haddad-Ludwig-affine-frac-sobolev-isop-ine,sharpaffine lp sobol ine,Wang-tuo-2012,Wang-2013,Zhang 1999} among others. See e.g., Xiao \cite{Xiao-2017} and Xiao and Zhang \cite{Xiao-Zhang-2016} for more contributions on the $p$-affine capacity. Moreover, the aforementioned affine inequalities are stronger than their Euclidean relatives;  see e.g., Ludwig, Xiao and Zhang \cite{Ludwig Xiao and Zhang 2011}, and   Lutwak, Yang and Zhang \cite{sharpaffine lp sobol ine}:
\begin{align}
\label{xiaos-result-2}
 \frac{C_{p, 0}(K)}{C_{p, 0}(B_2^n)} \leq \frac{C_p(K)}{C_p(B_2^n)}\ \ \mathrm{for} \ \ p\in [1, n)\  \ \ \mathrm{and} \ \  \ \frac{\Phi_p(K)}{\Phi_p(B_2^n)} \leq \frac{S_p(K)}{S_p(B_2^n)} \ \ \mathrm{for} \ \ p\in [1, \infty).
\end{align}
 
Both $C_{p, 0}(\bigcdot)$ and   $\Phi_p(\bigcdot)$ have been extended to their asymmetric counterparts involving the function $\varphi_{\tau}$ with  $\tau\in [-1, 1]$ given by:    \begin{align}\label{varphi tau}
\varphi_\tau(t)^p=\Big(\frac{1+\tau}{2}\Big) t_{+}^p+\Big(\frac{1-\tau}{2}\Big) t_{-}^p,
\end{align} where $t_{+}=\max \{0, t\}$ and $t_{-}=\max \{0,-t\}$ for $t\in\mathbb{R}$. If $m=1$ and $H_2(t)=\varphi_{\tau}(t)^p$ for $t\in\R$, then we get the general $p$-integral affine surface area $\Phi_{p,\tau}(K)$ and asymmetric $L_p$ affine energy $\mathcal{E}_{p,\tau}(f)$ by Haberl and Schuster \cite{lp-affine-isoperimetric-ine, asym affine lp sobo ine}, and general $p$-affine capacity $C_{p, \tau}(K)$ by Hong and Ye in \cite{Hong and Ye 2018}:
\begin{align*}
\Phi_{p,\tau}(K)=\Phi_{p,H_2}(K), \ \ \mathcal{E}_{p,\tau}(f)=\mathcal{E}_{p, H_2}(f) \ \ \mathrm{and}\ \ C_{p,\tau}(K)=C_{p,H_2}(K).
\end{align*}
These are equivalent to \eqref{phi-e-c-0} when $\tau=0$ up to a multiplicative constant.
The asymmetric versions of inequalities \eqref{xiaos-result-2--1}, \eqref{xiaos-result-1--1}   and \eqref{xiaos-result-2} can be summarized as follows (see e.g., \cite{lp-affine-isoperimetric-ine, Hong and Ye 2018}): for $p\in [1, n)$, $\tau\in [-1, 1]$ and $K$ a convex body containing $o$ in its interior, one has  
\begin{align}
\label{hh and ye cp}
&\bigg(\frac{V_n(K)}{V_n(B_2^n)}\bigg)^{\frac{1}{n}}
\leq
\bigg(\frac{C_{p, \tau}(K)}{C_{p, \tau}(B_2^n)}\bigg)^{\frac{1}{n-p}}
\leq
\bigg(\frac{C_p(K)}{C_p(B_2^n)}\bigg)^{\frac{1}{n-p}}
\leq
\bigg(\frac{S_p(K)}{S_p(B_2^n)}\bigg)^{\frac{1}{n-p}},\\
\label{hh and ye phi p} 
&\bigg(\frac{V_n(K)}{V_n(B_2^n)}\bigg)^{\frac{1}{n}} \leq\bigg(\frac{C_{p, \tau}(K)}{C_{p, \tau}(B_2^n)}\bigg)^{\frac{1}{n-p}} \leq\bigg(\frac{\Phi_{p, \tau}(K)}{\Phi_{p, \tau}(B_2^n)}\bigg)^{\frac{1}{n-p}} \leq\bigg(\frac{S_p(K)}{S_p(B_2^n)}\bigg)^{\frac{1}{n-p}}. 
\end{align} 
Again, some of these inequalities may work for other range of $p$ and more general $f$ and $K$. Moreover, these inequalities are closely related to the asymmetric $L_p$ affine Sobolev inequalities  by Haberl and Schuster \cite{asym affine lp sobo ine}. 
See e.g., \cite{Haberl-Schuster-Xiao-2012,Haddad-Ludwig-affine-frac-lp-sobolev-ine,Nguyen-2016,Wang-2015} for more applications of $\mathcal{E}_{p, \tau}(\bigcdot).$

Our main goal in this paper is to develop a theory of the $m$th order $p$-affine capacity and establish related affine isoperimetric and isocapacity inequalities that extend \eqref{hh and ye cp} and \eqref{hh and ye phi p}.  These are motivated by the recent progress on the $m$th order affine isoperimetric and Sobolev inequalities by Haddad, Langharst, Putterman, Roysdon and Ye \cite{Haddad-Ye-2023,Haddad-ye-lp-2025}. This direction dates back to Schneider \cite{Schneider 0}, where he introduced the $m$th order difference body and established the corresponding Rogers-Shephard inequality. The main idea behind Schneider's work is the introduction of $m$th order covariogram function of a convex body, whose support is naturally the $m$th order difference body. Such a covariogram function has been used in \cite{Haddad-Ye-2023,Haddad-ye-lp-2025} to define the $(L_p,Q)$-projection body $\Pi_{p, Q}K$ (see \eqref{support function of ho lp projection body} for the formula of its support function), where $p\geq 1$,  $Q\subset \Mm$ is a convex body containing $o$ and $K\subset \Mn$ is a convex body with $o$ in its interior. They also established the higher order $L_p$ affine isoperimetric inequalities, which take the following form: 
\begin{align}\label{conclusion result 2---1}
\left(\frac{V_n(K)}{V_n(B_2^n)}\right)^{\frac{1}{n}}   \leq\left(\frac{\Phi_{p, Q}(K)}{\Phi_{p, Q}(B_2^n)}\right)^{\frac{1}{n-p}}, 
\end{align} where $\Phi_{p,Q}(K)$ is the $m$th order $p$-integral affine surface area of $K$ given by \eqref{Phi-p-H} with $H_3(x)=h_Q(x)^p$ for $x\in \Mm$. Here $h_Q(x)$ is the support function of $Q$ defined in \eqref{def-support-f}.   Another important object defined in \cite{Haddad-Ye-2023,Haddad-ye-lp-2025} is the $m$th order $L_p$ affine energy $\mathcal{E}_{p, Q}(f)$ for $f\in C_c^{\infty}$, defined by \eqref{E-p-H} again with the same choice $H_3$. The following $m$th order $L_p$ affine Sobolev inequality has been established in \cite[Corollary 1.10]{Haddad-Ye-2023} for $p=1$ and \cite[Theorem 1.1]{Haddad-ye-lp-2025} for $p\in[1,n)$. 
Let $Q\subset\Mm$ be a convex body containing $o$ and let $f \in W_0^{1, p}$ (see Section \ref{notation} for the definition) be a non-constant function. Then there exists a constant $C_{n,p,Q}>0$, depending only on $n,p$ and $Q$, such that  \begin{align} 
 C_{n,p,Q}\|f\|_{\frac{np}{n- p}}\leq \mathcal{E}_{p, Q}(f).\label{high-ord-Sov-ineq-1}
\end{align}
When $m=1$ and 
 $$Q=\Big[-\Big(\frac{1-\tau}{2}\Big)^{\frac{1}{p}},\ \Big(\frac{1+\tau}{2}\Big)^{\frac{1}{p}}\Big]\subset \R \ \ \mathrm{for}\ \ \tau\in [-1,1],$$ 
then  $h_Q^p(\bigcdot)=\varphi_{\tau}(\bigcdot)$, and hence $C_{p, Q}(\bigcdot)$ reduces to the  $p$-affine capacity \cite{Hong and Ye 2018,Xiao 2015, Xiao 2016}. Consequently, inequalities \eqref{conclusion result 2---1} and \eqref{high-ord-Sov-ineq-1} extend the corresponding inequalities, such as,  \eqref{hh and ye phi p} and the asymmetric $L_p$ affine Sobolev inequalities. The $m$th order theory has attracted a lot of attention recently, see e.g.,  \cite{J. Haddad,Haddad-Putterman-2025,Dylan-2025-Some-Comments,Dylan-2025-moment-entropy,mth order weighted projection body,mth order affine polya szego principle,higher order reverse isoperimetric inequality,higher order lp mean zonoids,Sola-on-general-versions-of-projection-ine,the-mth-order-Orlicz-projection-bodies}. 

In view of the relations between affine capacities and the affine energies, the $m$th order $p$-affine capacity can be introduced and applied to get stronger or more general inequalities than  \eqref{hh and ye cp}, \eqref{hh and ye phi p} and \eqref{conclusion result 2---1}. Several equivalent definitions for the $m$th order $p$-affine capacity are provided in Section \ref{section-3}, and here we present one of them below. Denote by $\mathcal{K}_{o}^{1,m}$ the set of convex bodies in $\Mm$ containing $o$. Let $p\in [1,n)$, $Q\in\mathcal{K}_o^{1,m}$, and $K\subset \Mn$ be compact. The $m$th order $p$-affine capacity of $K$, $C_{p, Q}(K)$, is given by \eqref{C-p-H} with $H(x)=h_Q(x)^p$ for $x\in \Mm$, i.e.,
\begin{align*}
C_{p, Q}(K)=\inf\Big\{ \mathcal{E}_{p, Q}^p(f)\ : \ f \in C_c^{\infty}\ \mathrm{and}\  f \geq 1\  \mathrm{on}\  K\bigg\}.
\end{align*}
We also prove some basic properties for the $m$th order $p$-affine capacity, including symmetry and concavity in $Q$, as well as monotonicity, homogeneity, sub-additivity, translation invariance and boundary behavior in $K$. Furthermore, we show that $C_{p,Q}(\bigcdot)$ satisfies both the upper semi-continuity and the continuity from above.

One of the main results in Section \ref{section-4}  is to find the precise value of the $m$th order $p$-affine capacity for the unit Euclidean ball. This is stated in Theorem  \ref{theorem with volume-1} and we provide its precise value below for readers' convenience: for $Q\in\mathcal{K}_o^{1,m}$,  one has 
\begin{align*} 
C_{p,Q}(B_2^n)= \begin{cases}
0, &\ \ \mathrm{for}\ \ p\ge n,\\
n V_n(B_2^n) d_{n, p}(Q)\big(\frac{n-p}{p-1}\big)^{p-1}, &\ \ \mathrm{for}\ \ p\in (1,n), \\
n V_n(B_2^n)d_{n, p}(Q), &\ \ \mathrm{for}\ \ p=1.
\end{cases}   
\end{align*} The inequalities to compare the $m$th order $p$-affine capacity with the volume and the $p$-variational capacity are proved in Theorem \ref{theorem with volume} and, respectively, in Theorem \ref{cpqk and cpk}. These inequalities can be summarized as follows:  for $p\in [1,n)$,  $Q\in\mathcal{K}_o^{1,m}$,  and $K\subset \Mn$   a convex body containing $o$ in its interior, one has, 
\begin{align*}
\left(\frac{V_n(K)}{V_n(B_2^n)}\right)^{\frac{1}{n}}
\leq
\left(\frac{C_{p, Q}(K)}{C_{p, Q}(B_2^n)}\right)^{\frac{1}{n-p}}
\leq
\left(\frac{C_p(K)}{C_p(B_2^n)}\right)^{\frac{1}{n-p}}
\leq
\left(\frac{S_p(K)}{S_p(B_2^n)}\right)^{\frac{1}{n-p}}. 
\end{align*} These inequalities are stated for $K$ being a convex body, but they also hold for more general compact sets. See more details in  Section  \ref{section-4}.   

Section \ref{Section-5} is dedicated to proving inequalities comparing volume, $m$th order $p$-affine capacity,  $m$th order $p$-integral surface area, and $L_p$ surface area. These results are established for Lipschitz star bodies $K$ (see Section \ref{notation} for the definition) and are given in Theorems \ref{theorem c and phi when p ge 1} and \ref{c1Qk and Phi 1 QK}. For reader's convenience, we only state them for $K$ being a convex body containing $o$ in its interior: for $p\in [1, n)$ and $Q\in\mathcal{K}_o^{1,m}$, one has   \begin{align*} 
\left(\frac{V_n(K)}{V_n(B_2^n)}\right)^{\frac{1}{n}} \leq\left(\frac{C_{p, Q}(K)}{C_{p, Q}(B_2^n)}\right)^{\frac{1}{n-p}} \leq\left(\frac{\Phi_{p, Q}(K)}{\Phi_{p, Q}(B_2^n)}\right)^{\frac{1}{n-p}} \leq\left(\frac{S_p(K)}{S_p(B_2^n)}\right)^{\frac{1}{n-p}}.    
\end{align*}

\section{Preliminaries and Notations}\label{notation}
Our setting in this paper is the real vector space of $n\times m$ matrices, denoted by $\M$, with  $n,m\in \mathbb{N}$, where $\mathbb{N}$ is the set of positive integers.  When no matrix product is involved, $\M$ is often  identified with the Euclidean space $\mathbb{R}^{nm}$. For instance, $\Mn$ and $\Mm$ are identified as  $\mathbb{R}^n$ and $\mathbb{R}^m$, respectively. Elements of $\Mn$ and $\Mm$ are usually written by lowercase letters, such as $x$ and $y$, while elements of $\M$, for $m\neq 1$ and $n\neq 1$, are written by boldface letters, such as $\x$ and  $\y$. Boldface uppercase letters, such as $\A$, $\mathbf{K}$, $\mathbf{L}$, are used to denote subsets of $\M$. 

The space $\M$ is equipped with the standard inner product:  $\x\bigcdot\y=\tr(\x^{\mathrm{T}}{}_{\bigcdot}\y),$
where $\tr(\bigcdot)$ is the trace operator and  $\x^{\mathrm{T}}{}_{\bigcdot}\y\in M_{m,m}(\mathbb{R})$ is the matrix product of $\x^{\mathrm{T}}$, the transpose of  $\x \in M_{n,m}(\mathbb{R})$, and  $ \y\in M_{n,m}(\mathbb{R})$. When $\x=\y$, one gets the norm of $\x\in\M$ given by $|\x|=\sqrt{\x\bigcdot\x}$. The distance from $\x \in \M$ to $\A \subset \M$ is given by
\begin{align*}
\dist(\x, \A)=\inf \big\{|\x-\y|:\   \y \in \A\big\}.
\end{align*}
Clearly, $\dist(\x, \A)=0$  if $\x \in \overline{\A}$, the closure of $\A$. For $\A\subset \M$, let  $\mathbf{1}_\A(\bigcdot)$ denote the indicator function of $\A$, defined by $\mathbf{1}_{\A}(\x)=1$ if $\x\in \A$ and $\mathbf{1}_{\A}(\x)=0$ if $\x\notin \A$.

 A set $\K\subset \M$ is a convex body if it is compact, has a nonempty interior, and satisfies the property that the line segment joining any two points in $\K$ lies entirely within $\K$. Associated to a convex body $\K\subset \M$ is the support function $h_{\K}: \M\rightarrow \R$ defined by \begin{align}\label{def-support-f}
     h_{\K}(\x)=\max\big\{\x\bigcdot \y : \y\in \K\big\} \ \ \mathrm{for} \ \ \x\in \M.
 \end{align}  Clearly,  for $A\in M_{n,l}(\mathbb{R})$, $\x\in M_{l,m}(\mathbb{R})$, $B\in M_{l,m}(\mathbb{R})$ and $\y\in M_{n,l}(\mathbb{R})$ with $l\in\N$, one has,
\begin{align}
\label{h KB} 
h_{\K}(A{}_{\bigcdot}\x)=h_{A^{\mathrm{T}}{}_{\bigcdot}{\K}}(\x)\ \ \mathrm{and}\ \
h_{\K}(\y{}_{\bigcdot}\, B)&=h_{\K{}_{\bigcdot}\, B^{\mathrm{T}}}(\y).
\end{align} In particular,   for any $c>0$ and $\x\in \M$, it holds that 
\begin{align}
\label{hom-1-1}
h_{\K}(c\x)= h_{c\K}(\x)=ch_{\K}(\x).
\end{align} Note that $h_{\K}$ is Lipschitz continuous on every compact subset $\E\subset\M$, i.e., for each $\E$, there exists a constant $\mathrm{Lip}_{\E}>0$ such that $$|h_{\K}(\x)-h_{\K}(\y)|\leq \mathrm{Lip}_{\E}\  |\x-\y| \ \ \mathrm{for\ all} \ \ \x,\y\in \E.$$ 

Let $\partial \K$ be the boundary of the convex body $\K\subset \M$.  Write  $\nu_{\K}(\bigcdot)$ for the Gauss map of $\K$, and hence $\nu_{\K}(\y)$ represents the set of the outer unit normal vectors at $\y\in\partial \K$. The preimage of $\nu_{\K}(\bigcdot)$, denoted by $\nu_{\K}^{-1}(\bigcdot)$, represents the reverse Gauss map of $\K$. Let $S^{nm-1}$ be the unit Euclidean sphere in $\M$.   Associated to $\K$ is a Borel measure $S_{\K}(\bigcdot)$,   the surface area measure of $\K$, defined by \begin{align*}
\int_{S^{nm-1}}f(\u)dS_{\K}(\u)=\int_{\partial \K} f\big(\nu_{\K}(\pmb{y})\big)d\mathcal{H}^{nm-1}(\pmb{y}),
\end{align*} for any continuous function $f:S^{nm-1}\to \R$. Hereafter,  $\mathcal{H}^{nm-1}$ stands for the $(nm-1)$-dimensional Hausdorff measure. In fact, for any Borel subset $\omega\subset S^{nm-1},$ one has    
\begin{align*}
S_{\K}(\omega)=\mathcal{H}^{mn-1}\big(\nu_{\K}^{-1}(\omega)\big).  
\end{align*}  

Let $\mathcal{K}_{(o)}^{n,m}$ be the set consisting of all convex bodies in $\M$ containing the origin $o$ in their interiors. For each $\K\in\mathcal{K}_{(o)}^{n,m}$, there exists a convex body $\K^*\in\mathcal{K}_{(o)}^{n,m}$, called the polar body of $\K$, defined by
\begin{align}\label{polar-body}
\K^*=\big\{\x\in \M: \x\bigcdot \y\leq1 \ \ \mathrm{for\ any}\ \ \y\in \K\big\}.
\end{align}
It is well known that, for each $\K\in \mathcal{K}_{(o)}^{n,m}$,  \begin{align}\label{VnmKstar}
V_{nm}(\K^*)=\frac{1}{nm}\int_{S^{nm-1}} \Big(\frac{1}{h_{\K}(\u)}\Big)^{nm}d\u, 
\end{align}  where the integral is taken with respect to $d\u=d\mathcal{H}^{nm-1}|_{S^{nm-1}}$, and $V_{nm}(\E)$ denotes the volume of $\E\subset  \M$ whenever it exists. 

A set $\L\subset M_{n,m}(\mathbb{R})$ is called a star-shaped set about $o\in M_{n,m}(\mathbb{R})$,  if $\lambda \y \in \L $
holds for all $\lambda\in [0, 1]$ and $\y\in \L.$ For $\L$ a star-shaped set about $o$, its radial function $\rho_{\L}$ can be formulated by 
$$
\rho_{\L}(\x)=\sup \{\lambda\ge0:\lambda \x\in \L\} \ \  \mathrm{for} \ \ \x\in \M\setminus\{o\}.
$$ Such a star-shaped set is called a star body about $o$ if  $\rho_{\L}$, restricted to $S^{nm-1}$, is strictly positive and continuous. Again, it is well known that  
\begin{align}\label{volume-K}
V_{nm}(\L)=\int_{\L}d\x=\frac{1}{nm}\int_{S^{nm-1}} \rho_{\L}(\u)^{nm}d\u =\frac{1}{nm} \int_{\partial \L} \x \bigcdot \nu_{\L}(\x) d \mathcal{H}^{nm-1}(\x),
\end{align} where $d\x$ denotes the Lebesgue measure on $\M$, $\nu_{\L}(\x)$ is a unit outer normal of $\partial \L$ at $\x$, and the last equality follows directly from the first one by using the divergence formula. 

We call $\L$ a Lipschitz star body if it is a star body about $o$  and its boundary is Lipschitz. Let $\mathcal{L}_o^{n,m}$ stand for the set of Lipschitz star bodies about $o$ in $\M$. Clearly,  $\mathcal{K}_{(o)}^{n,m}\subset \mathcal{L}_o^{n,m}$.
Define the gauge function of $\L\in\mathcal{L}_o^{n,m}$ by  
\begin{align}\label{PL}
p_{\L}(\y)=\frac{1}{\rho_{\L}(\y)}\ \ \mathrm{if}\ \ \y \neq 0, \ \ \mathrm{and} \ \ \  p_{\L}(\y)=0\ \   \mathrm{if}\ \ \y=0.
\end{align}

A function $f:\Mn\rightarrow \R$  is said to have compact support if it vanishes outside a compact subset of $\Mn$. 
The standard notation $C_c^{\infty}\big(\Mn\big)$ (or simply $C_c^{\infty}$) refers to the space of smooth (i.e., infinitely differentiable) functions on $\Mn$ with compact supports. For $p\in [1,\infty)$, denote by $L_p\big(\Mn\big)$ (or simply $L_p$) the space of all Lebesgue measurable functions $f:\Mn\rightarrow \mathbb{R}$ such that the $L_p$ norm
\begin{align}\label{lp-norm}
\|f\|_{p}=\Big(\int_{\Mn}|f(x)|^p d x\Big)^{\frac{1}{p}}<\infty. \end{align} 

 Let $U\subset \Mn$ be a nonempty open set. 
The space $L_{loc}^1(U)$ (see, e.g., Evans and Gariepy \cite[Definition 4.1]{Evans-Gariepy-1992}) consists of all measurable functions that are integrable on every compact subset of $U$.
We say that the weak partial derivative $\partial_i f$ of $f \in L_{loc}^1(U)$ exists if there exists a function $g_i \in L_{loc}^1(U)$ such that
$$
\int_{U}f(x)\partial_i \varphi(x)dx=-\int_{U} g_i(x) \varphi(x) d x \ \ \mathrm{for \ all \ } \varphi\in C_c^{\infty}(U).
$$
In this case, we write $\partial_if=g_i$ and call the vector $\nabla f=\left(\partial_1 f, \cdots, \partial_n f\right)$ the weak gradient of $f$. Let $W^{1,p}\big(\Mn\big)$ (or simply $W^{1,p}$) denote the Sobolev space consisting of all  $f\in L_p$ such that $\partial_i f \in L_p$ for all $i=1,\cdots,n$ (see, e.g., Evans and Gariepy \cite[Definition 4.2]{Evans-Gariepy-1992}). A natural norm in $W^{1,p}$ is 
$$\|f\|_{1,p}=\|f\|_p+\|\nabla f\|_p,$$
where $\|\nabla f\|_p$ is the $L_p$ norm of $\nabla f$, i.e., $$\|\nabla f\|_p=\Big(\int_{\Mn} |\nabla f(x)|^pdx\Big)^{\frac{1}{p}}.$$
The completion of $C_c^{\infty}$ under the norm $\|\bigcdot\|_{1,p}$ is a subspace of $W^{1,p}$, denoted by  
$W_0^{1,p}\big(\Mn\big)$ (or simply $W_0^{1,p}$).

Let $f \in C_c^{\infty}$ and $\|f\|_{\infty}=\sup_{x\in\Mn}|f(x)|$. Then, for any $t \in\left(0,\|f\|_{\infty}\right)$, the set \begin{align}\label{level-set-f}
[f]_t=\Big\{x \in \Mn:|f(x)| \geq t\Big\},
\end{align} is compact. Note that $[f]_t$ is known as the superlevel set of $f$ and has the following property (i.e., Sard's theorem \cite[P.39]{Guillemin-1974}):  for almost all $t \in\left(0,\|f\|_{\infty}\right)$, the smooth $(n-1)$-dimensional submanifold  $$\partial[f]_t=\Big\{x \in \Mn:|f(x)|=t\Big\}$$ satisfies that $\nabla f(x) \neq o$ for all $x \in \partial[f]_t$. Moreover,
\begin{align}\label{nu-f-t}
\nu_{[f]_t}(x)=-\frac{\nabla f(x)}{|\nabla f(x)|}.   
\end{align} 
We shall need the following Federer's coarea formula \cite[Theorem 3.2.12]{Federer 1969}: if $f: \Mn \rightarrow \mathbb{R}$ is a Lipschitz function and $g: \Mn \rightarrow[0, \infty)$ is a Lebesgue integrable function, then for any open set $\Omega \subset \mathbb{R}$,
\begin{align}\label{Federer-coarea-formula}
\int_{f^{-1}(\Omega) \cap\{|\nabla f|>0\}} g(x) d x=\int_{\Omega} \int_{f^{-1}(t)} g(x)|\nabla f(x)|^{-1} d \mathscr{H}^{n-1}(x) d t.
\end{align}

\section{The $m$th order $p$-affine capacity}\label{section-3}
This section is dedicated to the definition and some fundamental properties for the $m$th order $p$-affine capacity $C_{p,Q}(K)$, where  $p\in [1,n)$, $Q\in \mathcal{K}_o^{1,m}$, and $K$ is a compact set in $\Mn$. Here $ \mathcal{K}_o^{1,m}$ is the set of convex bodies in $\Mm$ containing the origin $o$. The fundamental properties, including the symmetry and concavity in $Q$, as well as the monotonicity, homogeneity, sub-additivity, translation invariance and boundary behavior in $K$ are provided. We also prove that $C_{p,Q}(K)$ satisfies the upper semi-continuity and the continuity from above.  

We now give the definition of the $m$th order $p$-affine capacity. For $f\in W_0^{1,p}$ and $\u\in S^{nm-1}$, let
$\nabla_{\u} f=\nabla f^{\mathrm{T}}{}_{\bigcdot}\u$ and 
\begin{align}\label{ek}
\mathscr{A}(K)=\left\{f \in W_0^{1, p}: f \geq \mathbf{1}_K\right\}.
\end{align}
\begin{definition}\label{def of cpqk with ek}
Let $p\in [1,n)$, $Q\in\mathcal{K}_o^{1,m}$ and $K$ be a compact subset of $\Mn$. Define the $m$th order $p$-affine capacity of $K$ by 
\begin{align*}
C_{p,Q}(K)=\inf_{f\in\mathscr{A}(K)}\Big(\int_{S^{nm-1}}\left\|h_Q\left(\nabla_{\u} f\right)\right\|_p^{-nm} d \u\Big)^{-\frac{p}{nm}}.
\end{align*}
\end{definition}
The $m$th order $p$-affine capacity can also be extended to bounded measurable sets. In this case, if  $L \subset \Mn$ is  a bounded measurable set, then 
\begin{align*}
C_{p, Q}(L)=\inf \Big\{C_{p, Q}(O): L \subset O\ \ \mathrm{and}\ \ O\subset \Mn\ \ \mathrm{is \ bounded \ and \ open}\Big\},
\end{align*}
where for a bounded open subset $O\subset \Mn$,
\begin{align*}
C_{p, Q}(O)=\sup \Big\{C_{p, Q}(K): K \subset O \ \ \mathrm{and}\ \  K \ \mathrm{\ is\ compact}\Big\}.
\end{align*}
In this paper, we focus only on $C_{p,Q}(K)$ for compact $K\subset \Mn$, and the definition for bounded measurable sets will not be used.  Several special cases of the $m$th order $p$-affine capacity are provided. Let $\varphi_{\tau}$ for $|\tau|\leq 1$ be given in \eqref{varphi tau}.  When $m=1$ and 
$$Q=\Big[-\Big(\frac{1-\tau}{2}\Big)^{\frac{1}{p}},\ \Big(\frac{1+\tau}{2}\Big)^{\frac{1}{p}}\Big]\subset \R,$$
then  $h_Q^p(\bigcdot)=\varphi_{\tau}(\bigcdot)$. Thus $C_{p, Q}(\bigcdot)$ reduces to the general $p$-affine capacity defined by Hong and Ye \cite[Definition 3.1]{Hong and Ye 2018}. It further reduces to the $p$-affine capacity, if  $\tau=0$,  defined by Xiao \cite[P.3]{Xiao 2015} and \cite[Definition 2.1]{Xiao 2016}.

Several equivalent definitions will be provided to better understand the $m$th order $p$-affine capacity $C_{p, Q}(\bigcdot)$. 
To fulfill this goal, a key step is to establish the finiteness of $C_{p, Q}(\bigcdot)$, which can be derived from the monotonicity and homogeneity of $C_{p, Q}(\bigcdot)$. For $K\in\mathcal{K}_{(o)}^{n,1}$ and $p\ge1$, the $(L_p,Q)$-projection body of $K$, denoted by $\Pi_{p,Q}K\in\mathcal{K}_{(o)}^{n,m}$, is defined via its support function (see \cite[Definition 1.2]{Haddad-ye-lp-2025}):
\begin{align}\label{support function of ho lp projection body}
h_{\Pi_{p,Q} K}(\x)^p=\int_{S^{n-1}} h_Q\left(v^{\mathrm{T}}{}_{\bigcdot} \x\right)^p dS_{K, p}(v)\ \ \mathrm{for}\ \ \x \in \M.  \end{align}
Here, $S_{K,p}$ represents the $L_p$ surface area measure of $K$ such that $d S_{K, p}(u)=h_K^{1-p}(u) d S_K(u)$ for $u\in S^{n-1}$. If $K=B_2^n$, the unit ball of $\Mn$, then $dS_{K, p}(v)=dv$.  

\begin{proposition}\label{monotonicity and homogeneity}
Let $p\in[1,n)$, $Q\in \mathcal{K}_o^{1,m}$ and $K$ be a compact subset of $\Mn$. Then, the following statements hold.  

\vskip 1mm \noindent i) Monotonicity:  $C_{p,Q}(K)\leq C_{p,Q}(L)$, if $L\subset \Mn$ is compact such that $K\subset L$.

\vskip 1mm \noindent ii) 
Homogeneity: $C_{p, bQ}(aK)=b^pa^{n-p} C_{p, Q}(K)$, where $a,b>0$ and $a K=\{ax: x \in K\}$.

\vskip 1mm \noindent iii) Finiteness:   $C_{p,Q}(K)<\infty$.
 \end{proposition}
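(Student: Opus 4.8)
The plan is to exploit the fact that the admissible class $\mathscr{A}(\bigcdot)$ from \eqref{ek} reverses inclusions. If $K\subset L$ are both compact, then $\mathbf{1}_K\leq \mathbf{1}_L$ pointwise, so any $f$ with $f\geq \mathbf{1}_L$ also satisfies $f\geq \mathbf{1}_K$; that is, $\mathscr{A}(L)\subset \mathscr{A}(K)$. Since $C_{p,Q}(\bigcdot)$ is the infimum of one fixed functional over these classes and the class for $K$ is the larger one, the infimum can only decrease, giving $C_{p,Q}(K)\leq C_{p,Q}(L)$.

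\textbf{Homogeneity.} I would treat the two dilations separately. For the scaling of $Q$, \eqref{hom-1-1} gives $h_{bQ}(x)=b\,h_Q(x)$, hence $\|h_Q\left(\nabla_{\u} f\right)\|_p$ is replaced by $b\,\|h_Q\left(\nabla_{\u} f\right)\|_p$ for every $\u$ and every $f$; because $\mathscr{A}(K)$ does not involve $Q$, factoring $b$ through the exponents $-nm$ and $-p/(nm)$ yields $C_{p,bQ}(K)=b^p\,C_{p,Q}(K)$. For the scaling of $K$, I would set up the bijection $\mathscr{A}(aK)\to\mathscr{A}(K)$, $f\mapsto g:=f(a\,\bigcdot)$ (one checks $g\geq \mathbf{1}_K$ exactly when $f\geq \mathbf{1}_{aK}$, and that dilation preserves $W_0^{1,p}$). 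The chain rule gives $\nabla_{\u} g(x)=a\,\nabla_{\u} f(ax)$, so by homogeneity of the support function and the change of variables $z=ax$ (Jacobian $a^n$),
\begin{align*}
\|h_Q(\nabla_{\u} g)\|_p^p = a^p\int_{\Mn} h_Q(\nabla_{\u} f(ax))^p\,dx = a^{p-n}\,\|h_Q(\nabla_{\u} f)\|_p^p.
\end{align*}
Propagating this common factor through the two exponents shows the defining functional at $g$ equals $a^{p-n}$ times its value at $f$, so taking infima over the bijective classes gives $C_{p,Q}(aK)=a^{n-p}C_{p,Q}(K)$. Combining the two scalings yields $C_{p,bQ}(aK)=b^pa^{n-p}C_{p,Q}(K)$.

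\textbf{Finiteness, and the main obstacle.} I would reduce to the ball and then exhibit a single competitor. Since $K$ is compact it lies in some $RB_2^n$, so monotonicity together with homogeneity gives $C_{p,Q}(K)\leq C_{p,Q}(RB_2^n)=R^{n-p}C_{p,Q}(B_2^n)$, reducing matters to bounding $C_{p,Q}(B_2^n)$. For this I would pick any $f\in C_c^{\infty}$ with $f\equiv 1$ on $B_2^n$ and supported in $2B_2^n$, so that $f\in\mathscr{A}(B_2^n)$. The step I expect to need the most care is verifying that the defining functional is finite at this $f$: the delicate point is that $Q$ is only assumed to contain $o$, not to contain it in its interior, so $h_Q$ may vanish along whole directions and $\|h_Q(\nabla_{\u} f)\|_p$ may equal $0$ for some $\u$. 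However, since $\nabla f$ is bounded with compact support and $h_Q$ is continuous with $h_Q(o)=0$, the map $z\mapsto h_Q(\nabla_{\u} f(z))$ is bounded and supported in $2B_2^n$, whence $\|h_Q(\nabla_{\u} f)\|_p<\infty$ for every $\u\in S^{nm-1}$. Thus each integrand $\|h_Q(\nabla_{\u} f)\|_p^{-nm}$ is strictly positive (it is merely $+\infty$ on the exceptional set where $h_Q(\nabla_{\u} f)$ vanishes identically, which only enlarges the integral), so the spherical integral is strictly positive and its $(-p/(nm))$-th power is finite. Hence $C_{p,Q}(B_2^n)<\infty$, and therefore $C_{p,Q}(K)<\infty$.
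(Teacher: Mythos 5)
Your proof is correct. Parts i) and ii) follow essentially the paper's own argument: the same inclusion $\mathscr{A}(L)\subset\mathscr{A}(K)$ for monotonicity, and the same change of variables $g\mapsto f=g(a\,\bigcdot)$ for homogeneity (the paper merely handles the $a$- and $b$-scalings in one computation rather than two, which is immaterial).

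Part iii) is where you genuinely diverge, and your softer route works. The paper takes the explicit radial competitor $f_\varepsilon$ (equal to $1$ on $B_2^n$, decaying linearly to $0$ on $1<|x|<1+\varepsilon$), computes in polar coordinates that
\begin{equation*}
\left\|h_Q\left(\nabla_{\u} f_{\varepsilon}\right)\right\|_p^p=\frac{(1+\varepsilon)^n-1}{n\varepsilon^p}\, h_{\Pi_{p,Q} B_2^n}({\u})^p ,
\end{equation*}
and then invokes the nontrivial external fact that $\Pi_{p,Q}B_2^n\in\mathcal{K}_{(o)}^{n,m}$, so that $V_{nm}(\Pi^*_{p,Q}B_2^n)<\infty$ gives the bound via \eqref{VnmKstar}. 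You instead observe that for any smooth bump $f$ the function $z\mapsto h_Q(\nabla_{\u}f(z))$ is bounded uniformly in $\u$ and compactly supported, so $\left\|h_Q(\nabla_{\u}f)\right\|_p\leq C'<\infty$ for all $\u$; hence the spherical integrand is bounded below by $(C')^{-nm}>0$, the spherical integral is at least $(C')^{-nm}\mathcal{H}^{nm-1}(S^{nm-1})>0$, and its $(-p/(nm))$-th power is finite. Your handling of the degenerate directions (where $h_Q(\nabla_{\u}f)$ vanishes identically, making the integrand $+\infty$) is also correct, since such values only increase the integral. What each approach buys: yours is self-contained and avoids the $(L_p,Q)$-projection body machinery and the citation that $\Pi_{p,Q}B_2^n$ has $o$ in its interior; the paper's computation, while heavier, produces the quantitative bound $C_{p,Q}(B_2^n)<2^n\big(nmV_{nm}(\Pi_{p,Q}^*B_2^n)\big)^{-p/nm}$ with explicit $\varepsilon$-dependence, which the authors reuse immediately afterwards (the remark that $C_{p,Q}\equiv 0$ for $p\in(0,1)$ by letting $\varepsilon\to 0^+$) and which foreshadows the exact evaluation of $C_{p,Q}(B_2^n)$ in Theorem \ref{theorem with volume-1}.
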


\begin{proof}
i) Let $K\subset L$ which gives $\mathbf{1}_L\ge \mathbf{1}_K$. It follows from \eqref{ek} that 
\begin{align*}
\mathscr{A}(L)=\Big\{f \in W_0^{1, p}: f \geq \mathbf{1}_L\Big\}\subset \Big\{f \in W_0^{1, p}: f \geq \mathbf{1}_K\Big\}=\mathscr{A}(K).
\end{align*}
This together with Definition \ref{def of cpqk with ek} yields $C_{p,Q}(K)\leq C_{p,Q}(L)$.

\vskip 2mm \noindent  ii) For $a>0$, let $g \in \mathscr{A}(a K)$, i.e., $ g\in W_0^{1,p}$ and $g\ge \mathbf{1}_{aK}.$  Let  $f\in W_0^{1,p}$  be given by $f(y)=g(a y)$ for  $y\in \Mn$. Thus,   $$f(y)=g(a y)\ge \mathbf{1}_{aK}(ay)=\mathbf{1}_{K}(y) \ \ \mathrm{for}\ \ y\in \Mn.$$
It follows from  \eqref{ek}  that $f\in \mathscr{A}(K)$.  The chain rule implies that $\nabla f(y)=a\nabla g(ay)$ and hence $\nabla_{\u} f(y)=a\nabla_{\u} g(ay)$ for $\u\in S^{nm-1}$. Combining with \eqref{hom-1-1}, \eqref{lp-norm} and Definition \ref{def of cpqk with ek}, one has
\begin{align*}
 C_{p,Q}(K)&=\inf_{f\in\mathscr{A}(K)}\Big(\int_{S^{nm-1}}\left\|h_Q\left(\nabla_{\u} f\right)\right\|_p^{-nm} d \u\Big)^{-\frac{p}{nm}}\\
 &=\inf_{f\in\mathscr{A}(K)}\Big(\int_{S^{nm-1}}\!\!\Big(\int_{\Mn}h_Q\big(\nabla_{\u} f(y)\big)^p d y\Big)^{-\frac{nm}{p}} \!\!d \u\Big)^{-\frac{p}{nm}} \\
&=\inf_{g\in\mathscr{A}(a K)}\Big(\int_{S^{nm-1}}\!\!\Big(\int_{\Mn}\Big(b^{-1}h_{bQ}\big(a\nabla_{\u} g(ay)\big)\Big)^p d y\Big)^{-\frac{nm}{p}} \!\!d \u\Big)^{-\frac{p}{nm}}\\
&=\frac{a^{p-n}}{b^{p}}\inf_{g\in\mathscr{A}(a K)}\Big(\int_{S^{nm-1}}\!\!\Big(\int_{\Mn}\Big(h_{bQ}\big(\nabla_{\u} g(x)\big)\Big)^p dx\Big)^{-\frac{nm}{p}} \!\!d\u\Big)^{-\frac{p}{nm}}\\&
=\frac{a^{p-n}}{b^{p}}C_{p,bQ}(a K),
\end{align*}
where we have used $ay=x$ (and hence $dy=a^{-n}dx$) in the second last equality.

\vskip 2mm \noindent  iii) To show the finiteness of $C_{p, Q}(\bigcdot)$, it is enough to prove   $C_{p, Q}(B_2^n)<\infty$, due to  the monotonicity and homogeneity of $C_{p, Q}(\bigcdot)$. For $\varepsilon>0$, let $f_{\varepsilon}\in W_0^{1, p}$ be defined by $f_{\varepsilon}(x)=1$ for $x\in B_2^n$,   $f_{\varepsilon}(x)=0$ for $|x|\geq 1+\varepsilon$, and 
\begin{align*}
f_{\varepsilon}(x)=1-\frac{|x|-1}{\varepsilon}\quad \mathrm{for} \quad|x|\in (1,1+\varepsilon).
\end{align*}
It follows from \eqref{ek} that $f_{\varepsilon}\in \mathcal{A}(B_2^n)$. A direct computation shows that
\begin{align}\label{nabla-f-ne-0}
\nabla f_{\varepsilon}(x)=-\frac{1}{\varepsilon}\nabla |x|=-\frac{x}{\varepsilon|x|}\ \ \mathrm{for}\ \ |x| \in(1,1+\varepsilon).
\end{align}
Moreover, as $f_{\varepsilon}\in W_0^{1, p}$ and is constant in the regions $|x|\leq 1$ and $|x|\ge1+\varepsilon$, by \cite[Corollary 1.21]{Heinonen 2006},  $\nabla f_{\varepsilon}(x)=0$ almost everywhere in these regions.  
This, together with \eqref{hom-1-1}, \eqref{lp-norm}, \eqref{support function of ho lp projection body}, \eqref{nabla-f-ne-0} and Fubini's theorem, yields that 
\begin{align*}
\left\|h_Q\big(\nabla_{\u} f_{\varepsilon}\big)\right\|_p^p 
&=\int_{\Mn}h_Q\big(\nabla_{\u} f_{\varepsilon}(x)\big)^p d x\\& =\int_{\big\{x: 1<|x|<1+\varepsilon\big\}}h_Q\bigg(\!\Big(-\frac{x}{\varepsilon|x|}\Big)^{\mathrm{T}}{}_{\bigcdot}{\u}\bigg)^p d x \\
&=\varepsilon^{-p} \int_1^{1+\varepsilon} r^{n-1} d r\int_{S^{n-1}}h_Q\big(-v^{\mathrm{T}}{}_{\bigcdot}{\u}\big)^p dv\\&=\frac{(1+\varepsilon)^n-1}{n\varepsilon^p} h_{\Pi_{p,Q} B_2^n}({\u})^p,
\end{align*}
where we have used the polar coordinate system in the third equality and the rotational invariance of $dv$ in the last equality.
Combining with Definition \ref{def of cpqk with ek}, \eqref{VnmKstar} and the fact that $\Pi_{p,Q}^* B_2^n\in \mathcal{K}_{(o)}^{n,m}$, one gets that, for $p\in[1,n)$,
\begin{align}\label{CpQB less than infty}
\nonumber C_{p, Q}(B_2^n) &\leq\left.\Big(\int_{S^{nm-1}}\left\|h_Q\left(\nabla_{\u} f_{\varepsilon}\right)\right\|_p^{-nm} d {\u}\Big)^{-\frac{p}{nm}}\right|_{\varepsilon=1}\\ 
\nonumber&= \frac{(1+\varepsilon)^n-1}{n\varepsilon^p}\bigg|_{\varepsilon=1}\Big(\int_{S^{nm-1}}h_{\Pi_{p,Q} B_2^n}({\u})^{-nm}
d {\u}\Big)^{-\frac{p}{nm}} \\
&<2^{n}\left(nmV_{nm}(\Pi_{p,Q}^* B_2^n)\right)^{-\frac{p}{nm}}<\infty.
\end{align} This completes the proof.
\end{proof}

Note that $C_{p,Q}(\bigcdot)$ can also be defined when $p\in(0,1)$ and $p\in[n,\infty)$, but those values are always $0$. Specifically, when $p\in (0,1)$ and $\varepsilon\to 0^+$, it follows from \eqref{CpQB less than infty} that $C_{p,Q}(B_2^n)=0$. Hence,  $C_{p,Q}(K)=0$ for any compact set $K\subset \Mn$, due to the monotonicity and homogeneity of $C_{p,Q}(\bigcdot)$. When $p\in[n,\infty)$, it follows from Theorem \ref{cpqk and cpk} and \eqref{cpb2n} that $C_{p,Q}(K)=0$.

We now provide two equivalent definitions of $C_{p, Q}(\bigcdot)$.  Let $f^+=\max\{f,0\}$. Note that by \cite[Lemma 1.19]{Heinonen 2006}, for any $f \in W_0^{1, p}$, one has $f^+ \in W_0^{1, p}$,
\begin{align}\label{nabla f plus and nabla f}
\nabla f^{+}(x)= \nabla f(x)\ \ \mathrm{ 
 for } \ f(x)>0\quad\mathrm{and}\quad\nabla f^{+}(x)=0 \ \mathrm{for} \ f(x)\leq 0.
\end{align} We shall need the following well-known fact: there exist   $\phi_j\in C_c^{\infty}$, $j\in\mathbb{N}$,    such that $\phi_j(t)=0$ for $t\in (-\infty, 0]$,   $\phi_j(t)=1$ for $t\in [1, \infty)$, and $
0 \leq \phi_j^{\prime}(t) \leq j^{-1}+1$ for $t\in \mathbb{R}.$  
\begin{theorem}\label{equiv def of cpqk of ek}
Let $p\in[1,n)$, $Q\in \mathcal{K}_o^{1,m}$ and $K$ be a compact subset of $\Mn$. The $m$th order $p$-affine capacity can be equivalently formulated as follows.
\vskip 1mm \noindent i) 
Let $\mathscr{B}(K)$ be the set of $f \in W_0^{1, p}$ such that $f \geq 1$ on $K$. Then
\begin{align}\label{CPQ=BK}
C_{p, Q}(K)=\inf_{f \in \mathscr{B}(K)} \Big(\int_{S^{nm-1}}\left\|h_Q\left(\nabla_{\u} f\right)\right\|_p^{-nm} d {\u}\Big)^{-\frac{p}{nm}}.
\end{align}
\vskip 1mm \noindent ii) 
Let $\mathscr{D}(K)$ be the set of $f \in W_0^{1, p}$ such that $0 \leq f \leq 1$ on $\Mn$ and $f=1$ in a neighborhood of $K$. Then
\begin{align}\label{def for f in DK}
C_{p, Q}(K)=\inf_{f \in \mathscr{D}(K)} \Big(\int_{S^{nm-1}}\left\|h_Q\left(\nabla_{\u} f\right)\right\|_p^{-nm} d \u\Big)^{-\frac{p}{nm}}.
\end{align}
\end{theorem}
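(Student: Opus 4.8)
The plan is to run everything through the inclusions $\mathscr{D}(K)\subseteq\mathscr{A}(K)\subseteq\mathscr{B}(K)$, which are immediate from the definitions ($f=1$ on a neighborhood of $K$ with $0\le f\le 1$ forces $f\ge\mathbf{1}_K$, and $f\ge\mathbf{1}_K$ forces $f\ge 1$ on $K$), and which therefore give at once
$\inf_{\mathscr{B}(K)}\mathcal{E}_{p,Q}^{p}\le C_{p,Q}(K)\le\inf_{\mathscr{D}(K)}\mathcal{E}_{p,Q}^{p}$, where $\mathcal{E}_{p,Q}^{p}(f)=\big(\int_{S^{nm-1}}\|h_Q(\nabla_{\u}f)\|_p^{-nm}\,d\u\big)^{-p/(nm)}$. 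The single guiding lemma I would record is a monotonicity principle: since $o\in Q$ gives $h_Q\ge 0$ and $h_Q$ is positively $1$-homogeneous by \eqref{hom-1-1}, if $g,f\in W_0^{1,p}$ satisfy $h_Q(\nabla_{\u}g(x))\le c\,h_Q(\nabla_{\u}f(x))$ for a.e. $x$ and every $\u\in S^{nm-1}$ with $c>0$, then $\|h_Q(\nabla_{\u}g)\|_p\le c\,\|h_Q(\nabla_{\u}f)\|_p$ for each $\u$, and raising to the negative powers $-nm$ and then $-p/(nm)$ reverses the inequality twice and yields $\mathcal{E}_{p,Q}^{p}(g)\le c^{p}\,\mathcal{E}_{p,Q}^{p}(f)$.

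For part i), only the reverse inequality $C_{p,Q}(K)\le\inf_{\mathscr{B}(K)}\mathcal{E}_{p,Q}^{p}$ needs work, and I would obtain it by passing to the positive part. Given $f\in\mathscr{B}(K)$, by \cite{Heinonen 2006} one has $f^{+}\in W_0^{1,p}$, and by \eqref{nabla f plus and nabla f} $\nabla_{\u}f^{+}=\mathbf{1}_{\{f>0\}}\nabla_{\u}f$, so $h_Q(\nabla_{\u}f^{+})\le h_Q(\nabla_{\u}f)$ pointwise (the lemma with $c=1$) and hence $\mathcal{E}_{p,Q}^{p}(f^{+})\le\mathcal{E}_{p,Q}^{p}(f)$. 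Since $f\ge 1$ on $K$ and $f^{+}\ge 0$ everywhere, $f^{+}\ge\mathbf{1}_K$, i.e. $f^{+}\in\mathscr{A}(K)$; taking the infimum over $f\in\mathscr{B}(K)$ gives $C_{p,Q}(K)\le\inf_{\mathscr{B}(K)}\mathcal{E}_{p,Q}^{p}$, which together with the inclusion proves \eqref{CPQ=BK}.

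For part ii) the content is the bound $\inf_{\mathscr{D}(K)}\mathcal{E}_{p,Q}^{p}\le C_{p,Q}(K)$, and the mechanism is a superlevel-set truncation applied to a \emph{continuous} near-minimizer. Fixing $\eta>0$, I would select a continuous competitor $f$ with $f\ge 1$ on $K$ and $\mathcal{E}_{p,Q}^{p}(f)\le C_{p,Q}(K)+\eta$ (available from the $C_c^{\infty}$ formulation \eqref{C-p-H} of $C_{p,Q}$), and replace $f$ by $f^{+}$ so that $f\ge 0$. For $\varepsilon\in(0,1)$ set $g_{\varepsilon}=\min\{f/(1-\varepsilon),1\}$. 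Then $0\le g_{\varepsilon}\le 1$ and $g_{\varepsilon}\in W_0^{1,p}$; because $f$ is continuous and $f\ge 1>1-\varepsilon$ on $K$, the set $\{f>1-\varepsilon\}$ is open, contains $K$, and $g_{\varepsilon}\equiv 1$ there, so $g_{\varepsilon}\in\mathscr{D}(K)$. On $\{0<f<1-\varepsilon\}$ one has $\nabla g_{\varepsilon}=(1-\varepsilon)^{-1}\nabla f$ and $\nabla g_{\varepsilon}=0$ a.e. otherwise, so $h_Q(\nabla_{\u}g_{\varepsilon})\le(1-\varepsilon)^{-1}h_Q(\nabla_{\u}f)$, and the lemma gives $\mathcal{E}_{p,Q}^{p}(g_{\varepsilon})\le(1-\varepsilon)^{-p}\mathcal{E}_{p,Q}^{p}(f)$. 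Letting $\varepsilon\to 0^{+}$ and then $\eta\to 0^{+}$ yields \eqref{def for f in DK}. (One may equally compose with the mollified cutoffs $\phi_j$, replacing $g_\varepsilon$ by $\phi_j\!\circ\!\big(f/(1-\varepsilon)\big)$ and letting $j\to\infty$, using $0\le\phi_j'\le j^{-1}+1$.)

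The main obstacle is exactly the neighborhood clause defining $\mathscr{D}(K)$: a generic $f\in W_0^{1,p}$ with $f\ge 1$ merely on the compact set $K$ need not exceed $1-\varepsilon$ on any open neighborhood of $K$, so the superlevel construction collapses for purely Sobolev competitors. This is why part ii) must be routed through continuous competitors, and the step demanding care is justifying that restricting to such competitors does not change the value of $C_{p,Q}(K)$, i.e. the equivalence of the $C_c^{\infty}$ formulation \eqref{C-p-H} with Definition \ref{def of cpqk with ek} and the lower semicontinuity of $\mathcal{E}_{p,Q}^{p}$ along the approximation. Once continuity of the competitor is secured, the remaining verifications—the $W_0^{1,p}$-membership of $f^{+}$ and of the truncations, and the a.e. gradient identities—are routine via \cite{Heinonen 2006} and \eqref{nabla f plus and nabla f}.
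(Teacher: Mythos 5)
Your part i) is correct and is essentially the paper's own argument: replace $f\in\mathscr{B}(K)$ by $f^{+}\in\mathscr{A}(K)$, use \eqref{nabla f plus and nabla f} together with $h_Q\geq 0$ to get $h_Q(\nabla_{\u}f^{+})\leq h_Q(\nabla_{\u}f)$ pointwise, and conclude by monotonicity of the functional (the paper phrases this with a minimizing sequence, but it is the same proof).

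Part ii), however, has a genuine gap: it is circular relative to the statement being proved. Your continuous near-minimizer is obtained by invoking ``the $C_c^{\infty}$ formulation \eqref{C-p-H} of $C_{p,Q}$,'' i.e.\ the assertion that the infimum in Definition \ref{def of cpqk with ek} over $W_0^{1,p}$ competitors equals the infimum over smooth competitors that are $\geq 1$ on $K$. In the paper this assertion is Corollary \ref{cpqk-f-smooth}, which is proved \emph{after} Theorem \ref{equiv def of cpqk of ek} and \emph{by means of} it: the approximation of Sobolev competitors by smooth ones (via \cite[Theorem 3.3]{Hong and Ye 2018} plus a Fatou argument) is carried out for functions in $\mathscr{D}(K)$, not $\mathscr{A}(K)$, precisely because mollification preserves the constraint ``$\geq 1$ on $K$'' only for functions that are identically $1$ on a \emph{neighborhood} of $K$; for a general $W_0^{1,p}$ competitor that is $\geq 1$ merely on the (possibly null) set $K$, a mollification can drop below $1$ on $K$. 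In other words, your chain of inequalities establishes $C_{p,Q}(K)\leq\inf_{\mathscr{D}(K)}\leq\inf_{C_c^{\infty}\cap\mathscr{B}(K)}$, but the link that closes the loop, $\inf_{C_c^{\infty}\cap\mathscr{B}(K)}\leq C_{p,Q}(K)$, is exactly the nontrivial approximation step, and in the paper's logical order it is a \emph{consequence} of part ii), not an available tool for proving it. The ``step demanding care'' that you flag at the end is therefore not a routine verification; it is the entire content of part ii) together with Corollary \ref{cpqk-f-smooth}, and your proof assumes it.

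The paper's proof of ii) stays inside the Sobolev class: for an arbitrary $f\in\mathscr{A}(K)$ it forms $\phi_j(f)$, where $\phi_j$ is smooth with $\phi_j=0$ on $(-\infty,0]$, $\phi_j=1$ on $[1,\infty)$ and $0\leq\phi_j'\leq 1+j^{-1}$; then $\phi_j(f)=1$ wherever $f\geq 1$, the chain rule in $W_0^{1,p}$ gives $\nabla\phi_j(f)=\phi_j'(f)\nabla f$, and the functional increases by at most the factor $(1+j^{-1})^{p}\to 1$. The obstacle you correctly identify --- that for a Sobolev competitor the set $\{f\geq 1\}$ need not contain an open neighborhood of $K$ --- does touch the paper's claim that $\phi_j(f)\in\mathscr{D}(K)$, and it is a real subtlety in how the pointwise constraints on $W_0^{1,p}$ functions are interpreted; but your fix trades this subtlety for circularity rather than resolving it. A self-contained repair should run the cutoff composition (your parenthetical variant) directly on Sobolev competitors $f\in\mathscr{A}(K)$, addressing the membership question for $\phi_j(f)$ head-on, and must not appeal to \eqref{C-p-H} or Corollary \ref{cpqk-f-smooth}.
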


\begin{proof}
\vskip 1mm \noindent i) As $\mathscr{A}(K) \subset \mathscr{B}(K)$, we only need to show that  \eqref{CPQ=BK} holds with ``$=$" replaced by ``$\leq $". Let $\left\{f_j\right\}_{j\in \mathbb{N}}\subset\mathscr{B}(K)$ be a sequence such that
\begin{align}\label{DF 2}
\lim _{j \to\infty} \Big(\int_{S^{nm-1}}\left\|h_Q\left(\nabla_{\u} f_j\right)\right\|_p^{-nm} d \u\Big)^{-\frac{p}{nm}}\!\!=\!\inf _{f \in \mathscr{B}(K)} \Big(\int_{S^{nm-1}}\left\|h_Q\left(\nabla_{\u} f\right)\right\|_p^{-nm} d \u\Big)^{-\frac{p}{nm}}.
\end{align}
It can be easily checked that $\big\{f_j^+\big\}_{j\in\mathbb{N}}\subset\mathscr{A}(K)$. Combining with \eqref{nabla f plus and nabla f}, it yields that 
\begin{align*}
h_Q\big(\nabla_{\u} f_j^{+}\big)= h_Q\big(\nabla_{\u} f_j\big) \ \ \mathrm{for}~f_j>0\ \ \mathrm{and}\ \ h_Q\big(\nabla_{\u} f_j^{+}\big)=0\ \ \mathrm{for}\ \ f_j\leq 0,    
\end{align*}
 for any $j\in\mathbb{N}$ and $\u \in S^{nm-1}$. Thus, 
$h_Q\big(\nabla_{\u} f_j^{+}\big) \leq h_Q\big(\nabla_{\u} f_j\big)$ and  
\begin{align*}
\Big(\int_{S^{nm-1}}\big\|h_Q(\nabla_{\u} f_j^{+})\big\|_p^{-nm} d \u\Big)^{-\frac{p}{nm}}\leq\Big(\int_{S^{nm-1}}\big\|h_Q(\nabla_{\u} f_j)\big\|_p^{-nm} d \u\Big)^{-\frac{p}{nm}}.  
\end{align*}
This, together with Definition \ref{def of cpqk with ek} and \eqref{DF 2}, implies that
\begin{align*}
C_{p, Q}(K)&=\!\!\!\!\!\inf _{f \in \mathscr{A}(K)}\!\!\Big(\!\int_{S^{nm-1}}\!\left\|h_Q(\nabla_{\u} f)\right\|_p^{-nm} d \u\Big)^{-\frac{p}{nm}}
\\ &\leq \limsup _{j\to\infty}\Big(\int_{S^{nm-1}}\left\|h_Q(\nabla_{\u} f_j^+)\right\|_p^{-nm} d \u\Big)^{-\frac{p}{nm}}\\
&\leq \!\!\lim _{j \to\infty}\!\Big(\int_{S^{nm-1}}\left\|h_Q(\nabla_{\u} f_j)\right\|_p^{-nm} d \u\Big)^{-\frac{p}{nm}}\\ &=\inf _{f \in \mathscr{B}(K)} \Big(\int_{S^{nm-1}}\left\|h_Q(\nabla_{\u} f)\right\|_p^{-nm} d \u\Big)^{-\frac{p}{nm}}.
\end{align*}
Thus,  the desired result in \eqref{CPQ=BK} holds.

\vskip 2mm \noindent ii) Notice that $\mathscr{D}(K)\subset\mathscr{A}(K)$. Thus, we only need to show that \eqref{def for f in DK} holds  with ``$=$" replaced by ``$ \geq$". Let $f\in \mathscr{A}(K)$, i.e., $f\in W_0^{1,p}$ and $f\ge \mathbf{1}_K$. It follows from  \cite[Theorem 1.18]{Heinonen 2006} that $\nabla \phi_j(f)=\phi_j^{\prime}\nabla f$. Thus, $\nabla_{\u}\phi_j(f)=\phi_j^{\prime}\nabla_{\u}f$ for any $\u\in S^{nm-1}$. Moreover,  $\phi_j(f) \in \mathscr{D}(K)$ for any $j\in \mathbb{N}$.   Together with \eqref{hom-1-1} and \eqref{lp-norm}, one gets
\begin{align*}
 \inf_{g \in \mathscr{D}(K)}\Big(\int_{S^{nm-1}}\left\|h_Q\big(\nabla_{\u} g\big)\right\|_p^{-nm} d \u\Big)^{-\frac{p}{nm}}
 &\leq \Big(\int_{S^{nm-1}}\left\|h_Q\big(\nabla_{\u} \phi_j(f)\big)\right\|_p^{-nm} d \u\Big)^{-\frac{p}{nm}}\\
 &=\Big(\int_{S^{nm-1}}\left\|h_Q\big(\phi_j^{\prime}\nabla_{\u} f\big)\right\|_p^{-nm} d \u\Big)^{-\frac{p}{nm}}\\
 &\leq (j^{-1}+1)^p \Big(\int_{S^{nm-1}}\left\|h_Q\left(\nabla_{\u} f\right)\right\|_p^{-nm} d \u\Big)^{-\frac{p}{nm}}.
\end{align*} The desired result in \eqref{def for f in DK} is an immediate consequence from Definition \ref{def of cpqk with ek}, after taking the limit as $j \to\infty$ on  both sides of the above inequality, and the infimum over $f\in \mathscr{A}(K)$.
\end{proof}

Replacing $W_0^{1,p}$ by $C_c^{\infty}$ in   Definition \ref{def of cpqk with ek} and Theorem \ref{equiv def of cpqk of ek}  will be more convenient in later context. We shall do this in the next corollary.  
\begin{corollary}\label{cpqk-f-smooth}
Let $p \in[1, n)$, $Q\in\mathcal{K}_o^{1,m}$ and $K$ be a compact set in $\Mn$. Then, 
\begin{align}\label{Theorem-3-4-formula}
C_{p, Q}(K)=\inf_{f \in C_c^{\infty} \cap \mathcal{E}}\Big(\int_{S^{nm-1}}\left\|h_Q\left(\nabla_{\u} f\right)\right\|_p^{-nm} d \u\Big)^{-\frac{p}{nm}},
\end{align} where $\mathcal{E}$ can be either of $\mathscr{A}(K), \mathscr{B}(K)$ and $  \mathscr{D}(K)$.
\end{corollary}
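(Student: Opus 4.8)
The goal is to replace $W_0^{1,p}$ by $C_c^\infty$ in each of the three variational problems defining $C_{p,Q}(K)$. The plan is to exploit the fact that $C_c^\infty$ is dense in $W_0^{1,p}$ (by definition $W_0^{1,p}$ is the completion of $C_c^\infty$ under $\|\cdot\|_{1,p}$), together with the approximation results already proved in Theorem \ref{equiv def of cpqk of ek}, which let us work with whichever of the three admissible classes is most convenient.

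Since $C_c^\infty\cap\mathcal{E}\subset\mathcal{E}$ in every case, the inequality ``$\le$'' (where the left side is the infimum over the full Sobolev class) is immediate from Definition \ref{def of cpqk with ek} and Theorem \ref{equiv def of cpqk of ek}; the content is the reverse inequality, namely that restricting to smooth competitors does not increase the infimum. I would prove this cleanly by taking $\mathcal{E}=\mathscr{D}(K)$. Fix $f\in\mathscr{D}(K)$, so $0\le f\le 1$, $f=1$ on an open neighborhood $U$ of $K$, and $f\in W_0^{1,p}$. By density, choose $g_j\in C_c^\infty$ with $\|g_j-f\|_{1,p}\to 0$, in particular $\nabla g_j\to\nabla f$ in $L_p(\Mn;\Mn)$. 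The functions $g_j$ need not lie in the admissible class, so the second step is a truncation/cutoff to land in $C_c^\infty\cap\mathscr{B}(K)$ (or $\cap\mathscr{A}(K)$): compose with a fixed cutoff $\phi\in C_c^\infty(\R)$ as in the paragraph preceding Theorem \ref{equiv def of cpqk of ek}, and since $f\equiv 1$ on $U$ one arranges $\phi(g_j)\ge 1$ on $K$ for $j$ large, while $\|\nabla\phi(g_j)\|_p$ stays controlled by $(j^{-1}+1)\|\nabla g_j\|_p$ via the chain rule $\nabla\phi(g_j)=\phi'(g_j)\nabla g_j$.

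The final step is to pass the $L_p$-convergence of the gradients through the affine functional
\[
f\mapsto\Big(\int_{S^{nm-1}}\|h_Q(\nabla_{\u} f)\|_p^{-nm}\,d\u\Big)^{-\frac{p}{nm}}.
\]
Here I would use that $h_Q$ is Lipschitz on compact sets (recorded after \eqref{def-support-f}) and that $\nabla_{\u}f=\nabla f^{\mathrm{T}}{}_{\bigcdot}\u$ depends linearly on $\nabla f$, so $|h_Q(\nabla_{\u}g_j)-h_Q(\nabla_{\u}f)|\le \mathrm{Lip}\,|\nabla g_j-\nabla f|$ pointwise; integrating gives $\|h_Q(\nabla_{\u}g_j)\|_p\to\|h_Q(\nabla_{\u}f)\|_p$ uniformly in $\u$. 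An application of the reverse Minkowski-type estimate or the dominated convergence theorem on $S^{nm-1}$ then yields convergence of the full spherical integral, so the smooth competitors realize the infimum arbitrarily well.

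The main obstacle is the last step: controlling the inner integrals $\|h_Q(\nabla_{\u}f)\|_p^{-nm}$ uniformly in $\u$, because these appear with a negative exponent and one must rule out degeneracy (i.e.\ the inner $L_p$ norm being too small) so that the outer integral converges and is stable under approximation. This is handled by noting that for admissible non-constant $f$ the quantity $\|h_Q(\nabla_{\u}f)\|_p$ is bounded below uniformly in $\u$ by a positive constant depending on $f$ — using that $Q$ contains $o$ in its interior so $h_Q$ is bounded below by a positive multiple of the Euclidean norm on a suitable cone of directions — which makes the integrand bounded and the convergence routine. The remaining details (verifying $\phi(g_j)\in C_c^\infty$ and the admissibility constraints) are standard manipulations already used in the proof of Theorem \ref{equiv def of cpqk of ek}.
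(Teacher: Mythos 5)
Your outline follows the same skeleton as the paper's proof (the easy inequality from inclusion, approximation of $f\in\mathscr{D}(K)$ by smooth admissible functions, the Lipschitz estimate $|h_Q(x)-h_Q(y)|\le\varrho|x-y|$, passage to the limit in the spherical integral, and the $\phi_j$-composition to pass between $\mathscr{B}(K)$ and $\mathscr{D}(K)$), but your second step has a genuine gap. From $\|g_j-f\|_{1,p}\to 0$ alone you cannot ``arrange $\phi(g_j)\ge 1$ on $K$ for $j$ large'': convergence in $W^{1,p}$ gives no pointwise control on the compact set $K$, which may well be Lebesgue-null (e.g.\ $K=\partial B_2^n$ or a single point), and even along a subsequence one only gets a.e.\ convergence. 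Worse, no fixed cutoff can repair an arbitrary density sequence: admissible competitors must have compact support, which forces $\phi(0)=0$, yet one can perturb any sequence $g_j\to f$ (multiply by smooth bumps vanishing on shrinking neighborhoods of a point of $K$) so that $g_j=0$ at some point of $K$ for every $j$, whence $\phi(g_j)=0<1$ there. This is exactly why the paper does not start from an arbitrary density sequence: it invokes the construction in the proof of \cite[Theorem 3.3]{Hong and Ye 2018}, where $f\in\mathscr{D}(K)$ is mollified and the fact that $f\equiv 1$ on an open neighborhood $U$ of $K$ guarantees that, for small mollification parameter, the smooth approximants are identically $1$ near $K$, hence genuinely lie in $C_c^{\infty}\cap\mathscr{B}(K)$. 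Some device exploiting the neighborhood $U$ (mollification, or correcting by $g_j(1-\eta)+\eta$ with $\eta\in C_c^\infty$, $\eta=1$ near $K$, $\mathrm{supp}\,\eta\subset U$) is indispensable here, and it is the reason the reduction must begin with $\mathscr{D}(K)$ rather than $\mathscr{A}(K)$ or $\mathscr{B}(K)$.

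Your final limiting step also rests on a hypothesis the statement does not grant: $Q\in\mathcal{K}_o^{1,m}$ only requires $o\in Q$, not $o$ in the interior of $Q$ (that stronger class is denoted $\mathcal{K}_{(o)}^{1,m}$). For instance, with $m=1$ and $Q=[0,1]$ one has $h_Q(t)=t_+$, which vanishes on an entire half-line, so $h_Q$ is not bounded below by a positive multiple of the norm in any symmetric sense. A uniform-in-$\u$ positive lower bound on $\|h_Q(\nabla_{\u}f)\|_p$ for a fixed admissible $f$ does hold, but proving it requires a separate nontrivial argument (pointedness of the polar cone of $Q$ plus the fact that a compactly supported function exceeding $1$ on $K$ must increase along suitable directions). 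The paper avoids this issue entirely: it applies Fatou's lemma to the nonnegative functions $\u\mapsto\|h_Q(\nabla_{\u}f_j)\|_p^{-nm}$, and because the outer exponent $-\frac{p}{nm}$ is negative, the Fatou inequality $\int\liminf\le\liminf\int$ reverses into exactly the inequality needed, with no domination, no uniform lower bound, and no nondegeneracy assumption (the argument stays valid, in the extended sense, even if some inner norm degenerates to $0$). So the correct repair of your proof is: replace the dominated-convergence step and the ``cone of directions'' claim by the Fatou argument, and replace the arbitrary density sequence by the mollification-based one from \cite{Hong and Ye 2018}.
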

\begin{proof} As $ \mathscr{D}(K)\subset   \mathscr{A}(K)\subset \mathscr{B}(K),$ it follows that 
\begin{align}\label{inclusion-C0infty}
C_c^{\infty} \cap \mathscr{D}(K)\subset C_c^{\infty} \cap \mathscr{A}(K)\subset C_c^{\infty} \cap \mathscr{B}(K).
\end{align}
We will prove \eqref{Theorem-3-4-formula} for $f\in C_c^{\infty} \cap \mathscr{B}(K)$ first and  then  for $f \in C_c^{\infty} \cap \mathscr{D}(K)$.

If $\mathcal{E}=\mathscr{B}(K)$, to prove \eqref{Theorem-3-4-formula} is equivalent to show \begin{align}\label{CPQ-ge-bk}
C_{p, Q}(K)\ge \inf_{f \in C_c^{\infty} \cap \mathscr{B}(K)}\Big(\int_{S^{nm-1}}\left\|h_Q(\nabla_{\u} f)\right\|_p^{-nm} d \u\Big)^{-\frac{p}{nm}},
\end{align} due to  $C_c^{\infty}\!\subset\! W_0^{1,p}$ and Theorem \ref{equiv def of cpqk of ek}. To this end, let $f\in \mathscr{D}(K)$. According to the proof of \cite[Theorem 3.3]{Hong and Ye 2018},  there exists a sequence $\{f_j\}_{j\in\mathbb{N}}\subset C_c^{\infty} \cap \mathscr{B}(K)$ such that $$f_j\rightarrow f\ \  \mathrm{in}\ \ W_0^{1,p}, \ \ \mathrm{i.e.,}\ \  \|f_j-f\|_{1,p}\rightarrow 0  \ \ \mathrm{as}\ \  j\rightarrow \infty.$$ 
Moreover, as $Q\in\mathcal{K}_o^{1,m}$, there exists $\varrho>0$ such that $Q\subset \varrho B_2^m$. By \cite[Lemma 1.8.12]{schneider}, one gets 
\begin{align*}
|h_Q(x)-h_Q(y)|\leq \varrho|x-y| \ \ \mathrm{for\ any}\ x,y \in \Mm.
\end{align*}
Combining with the triangle inequality, one gets, for any $p\in [1,n)$ and $\u\in S^{nm-1}$,
\begin{align}\label{p norm of nabla u fj}
\left|\left\|h_Q\left(\nabla_{\u} f_j\right)\right\|_p-\left\|h_Q\left(\nabla_{\u} f\right)\right\|_p\right|
\leq
\left\|h_Q\left(\nabla_{\u} f_j\right)-h_Q\left(\nabla_{\u} f\right)\right\|_p
\leq 
\varrho\left\|\nabla_{\u} f_j-\nabla_{\u} f\right\|_p.
\end{align}
As $\u=(u_1,\cdots,u_m)\in S^{nm-1}$ with $u_i\in \Mn$, $i=1,\cdots, m$, one has
\begin{align*}
|\nabla_{\u} f_j\!-\!\nabla_{\u} f|\!=\!|(\nabla f_j\!-\!\nabla f)^{\mathrm{T}}{}_{\bigcdot}\u|\!=\!|((\nabla f_j\!-\!\nabla f)\bigcdot u_1,\cdots,(\nabla f_j\!-\!\nabla f)\bigcdot u_m)|\!\leq \!|\nabla f_j\!-\!\nabla f|.
\end{align*}
Together with \eqref{p norm of nabla u fj},  one has, 
\begin{align*}
\left|\left\|h_Q\left(\nabla_{\u} f_j\right)\right\|_p-\left\|h_Q\left(\nabla_{\u} f\right)\right\|_p\right|\leq \varrho\left\|\nabla f_j-\nabla f\right\|_p.
\end{align*}
As $f_j\rightarrow f$ in $W_0^{1,p}$, we obtain that, for any $\u\in S^{nm-1}$,
$$\lim_{j\to\infty} \left\|h_Q\left(\nabla_{\u} f_j\right)\right\|_p=\left\|h_Q\left(\nabla_{\u} f\right)\right\|_p.$$
 It follows from  Fatou's lemma and   $f_j\in C_c^{\infty} \cap \mathscr{B}(K)$  that, for any $f\in \mathscr{D}(K)$,
\begin{align*}
\Big(\int_{S^{nm-1}}\left\|h_Q\left(\nabla_{\u} f\right)\right\|_p^{-nm} d \u\Big)^{-\frac{p}{nm}}
& =\Big(\int_{S^{nm-1}} \lim _{j \to \infty}\left\|h_Q\left(\nabla_{\u} f_j\right)\right\|_p^{-nm} d \u\Big)^{-\frac{p}{nm}} \\
& \geq\left(\liminf _{j \to \infty} \int_{S^{nm-1}}\left\|h_Q\left(\nabla_{\u} f_j\right)\right\|_p^{-nm} d \u\right)^{-\frac{p}{nm}} \\
& =\limsup _{j \to \infty}\Big(\int_{S^{nm-1}}\left\|h_Q\left(\nabla_{\u} f_j\right)\right\|_p^{-nm} d \u\Big)^{-\frac{p}{nm}}\\
&\ge 
\inf _{g \in C_c^{\infty} \cap \mathscr{B}(K)}\Big(\int_{S^{nm-1}}\left\|h_Q\left(\nabla_{\u} g\right)\right\|_p^{-nm} d \u\Big)^{-\frac{p}{nm}}.
\end{align*}Consequently,  \eqref{CPQ-ge-bk} follows from Theorem \ref{equiv def of cpqk of ek}, after taking the infimum over $f\in \mathscr{D}(K)$.   This completes the proof for $\mathcal{E}=\mathscr{B}(K).$

Now let us consider  $\mathcal{E}=\mathscr{D}(K)$. To this end, let $g\in C_c^{\infty} \cap \mathscr{B}(K)$. It can be easily checked that $\phi_j(g) \in C_c^{\infty}\cap\mathscr{D}(K)$, where $\phi_j$ is given  above Theorem \ref{equiv def of cpqk of ek}. 
Together with  \eqref{hom-1-1}, \eqref{lp-norm} and  $\nabla_{\u}\phi_j(g)=\phi_j^{\prime}\nabla_{\u}g$, one has, 
\begin{align*}
 \inf_{f \in C_c^{\infty} \cap\mathscr{D}(K)}\bigg(\int_{S^{nm-1}}\left\|h_Q\left(\nabla_{\u} f\right)\right\|_p^{-nm}d \u\bigg)^{-\frac{p}{nm}}
 &\leq \bigg(\int_{S^{nm-1}}\left\|h_Q\big(\nabla_{\u} \phi_j(g)\big)\right\|_p^{-nm} d \u\bigg)^{-\frac{p}{nm}}\\
 &\leq (j^{-1}+1)^p \bigg(\int_{S^{nm-1}}\left\|h_Q\left(\nabla_{\u} g\right)\right\|_p^{-nm} d \u\bigg)^{-\frac{p}{nm}}.
\end{align*}
By taking $j \to\infty$, one gets \begin{align*}
 \inf_{f \in C_c^{\infty} \cap\mathscr{D}(K)}\bigg(\int_{S^{nm-1}}\left\|h_Q\left(\nabla_{\u} f\right)\right\|_p^{-nm}d \u\bigg)^{-\frac{p}{nm}}
  \leq   \bigg(\int_{S^{nm-1}}\left\|h_Q\left(\nabla_{\u} g\right)\right\|_p^{-nm} d \u\bigg)^{-\frac{p}{nm}}.
\end{align*} After taking the infimum over $g\in C_c^{\infty} \cap\mathscr{B}(K)$, the following holds: 
\begin{align*} C_{p,Q}(K) &=\inf_{g\in C_c^{\infty} \cap \mathscr{B}(K)}\Big(\int_{S^{nm-1}}\left\|h_Q\left(\nabla_{\u} g\right)\right\|_p^{-nm} d \u\Big)^{-\frac{p}{nm}}\\&\geq \inf_{f \in C_c^{\infty} \cap \mathscr{D}(K)}\Big(\int_{S^{nm-1}}\left\|h_Q(\nabla_{\u} f)\right\|_p^{-nm} d \u\Big)^{-\frac{p}{nm}}\\&\geq \inf_{f \in C_c^{\infty} \cap \mathscr{B}(K)}\Big(\int_{S^{nm-1}}\left\|h_Q(\nabla_{\u} f)\right\|_p^{-nm} d \u\Big)^{-\frac{p}{nm}}=C_{p,Q}(K),
\end{align*} where the last inequality follows from $\mathscr{D}(K)\subset \mathscr{B}(K)$. Thus, \eqref{Theorem-3-4-formula} holds for $\mathcal{E}=\mathscr{D}(K)$. By \eqref{inclusion-C0infty},  \eqref{Theorem-3-4-formula}  for $\mathcal{E}=\mathscr{A}(K)$ follows immediately, and this completes the proof. 
\end{proof}

We now prove some other fundamental properties for $C_{p,Q}(\bigcdot)$, including for example, the symmetry and concavity with respect to $Q\in\mathcal{K}_o^{1,m}$, sub-additivity, translation invariance,  affine invariance  and upper semi-continuity, among others.

Let $Q_1, Q_2\in\mathcal{K}_{o}^{1,m}$, and $p \geq 1$. The $L_p$ sum of $Q_1$ and $Q_2$, denoted by $\lambda\bigcdot_pQ_1+_p \mu\bigcdot_pQ_2$, for $\lambda\ge0$ and $\mu \geq 0$, is defined via the support function
\begin{align}\label{lp-addition-with-lp-multi}
h_{\lambda \bigcdot_p Q_1+{ }_p \mu \bigcdot_p Q_2}(u)^p=\lambda h_{Q_1}(u)^p+\mu h_{Q_2}(u)^p,\ \ u\in S^{m-1}.
\end{align}
Here, the $p$-scalar multiplication is given by $\lambda \bigcdot{ }_p Q_1:=\lambda^{1/p} Q_1$. Denote by $GL(n)$   the general linear group of $n\times n$ invertible matrices. Let $\det (\phi)$ be the determinant of $\phi\in GL(n)$.  For $\phi\in GL(n)$ and $\u=(u_1,\cdots,u_m)\in \M$ with $u_i\in \Mn$, $i=1,\cdots,m$, let  
\begin{align}\label{bar T u}
\phi{}_{\bigcdot}\u=(\phi{}_{\bigcdot}u_1,\cdots,\phi{}_{\bigcdot}u_m).
\end{align}

\begin{proposition}\label{proposition 1}
Let $p\in[1,n)$, $Q,Q_1, Q_2\in \mathcal{K}_{o}^{1,m}$, and $K, K_1, K_2$ be compact sets in $ \Mn$. The following statements hold.
\vskip 1mm \noindent i) 
Symmetry: $C_{p, Q}(K)=C_{p,-Q}(K)$.
\vskip 1mm \noindent ii)  Concavity : for any $\lambda\in [0,1]$, one has
$$
C_{p, \lambda\bigcdot_{p} Q_1+_{p}(1-\lambda) \bigcdot_{p}Q_2}(K) \geq \lambda C_{p, Q_1}(K)+(1-\lambda)C_{p, Q_2}(K).
$$
\vskip 1mm \noindent iii) 
Sub-additivity: $C_{p,Q}(K_1\cup K_2)\leq C_{p,Q}(K_1)+C_{p,Q}(K_2).$
\vskip 1mm \noindent iv) 
Translation invariance: $C_{p, Q}(K+z)=C_{p, Q}(K)$, for any $z\in \Mn$.
\vskip 1mm \noindent v) 
The $m$th order boundary capacity: $C_{p, Q}(K)=C_{p, Q}(\partial K)
$.
\vskip 1mm \noindent vi) 
Affine invariance: 
$C_{p, Q}(\phi{}_{\bigcdot}K)=|\det(\phi)|^{\frac{n-p}{n}} C_{p, Q}(K)$, for $\phi\in GL(n)$. 
\end{proposition}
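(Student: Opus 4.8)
The plan is to prove each item by choosing, for a competitor $f$, a substitution that transforms the defining functional of Definition \ref{def of cpqk with ek} in a controlled way, and then passing to the infimum; by Corollary \ref{cpqk-f-smooth} I may restrict competitors to $C_c^\infty$ when convenient. Throughout I write $\Lambda(P)=\big(\int_{S^{nm-1}}P(\u)^{-nm}\,d\u\big)^{-p/nm}$, so that the objective equals $\Lambda(P_f)$ with $P_f(\u)=\|h_Q(\nabla_{\u}f)\|_p$; here $\Lambda$ is monotone increasing, while $G\mapsto\Lambda(G^{1/p})$ is concave and positively $1$-homogeneous on positive $G$ (this being the reverse Minkowski inequality for the exponent $-nm/p<1$). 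Items (i) and (iv) are then immediate. For (i), $h_{-Q}(x)=h_Q(-x)$ gives $P^{(-Q)}_f(\u)=P^{(Q)}_f(-\u)$, and the isometry $\u\mapsto-\u$ of $S^{nm-1}$ leaves $\Lambda$ unchanged; for (iv), $f\mapsto f(\,\cdot+z)$ is a bijection $\mathscr A(K+z)\to\mathscr A(K)$ preserving $P_f$ by translation invariance of Lebesgue measure. In both cases the objective is preserved competitor-by-competitor, so the infima agree.

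For (ii), the $L_p$-sum rule \eqref{lp-addition-with-lp-multi} gives, for $Q_\lambda=\lambda\bigcdot_pQ_1+_p(1-\lambda)\bigcdot_pQ_2$, the pointwise identity $(P^{(Q_\lambda)}_f)^p=\lambda(P^{(Q_1)}_f)^p+(1-\lambda)(P^{(Q_2)}_f)^p$; concavity of $G\mapsto\Lambda(G^{1/p})$ then yields $\mathcal E_{p,Q_\lambda}^p(f)\ge\lambda\mathcal E_{p,Q_1}^p(f)+(1-\lambda)\mathcal E_{p,Q_2}^p(f)\ge\lambda C_{p,Q_1}(K)+(1-\lambda)C_{p,Q_2}(K)$, and taking the infimum over $f\in\mathscr A(K)$ on the left proves the claim. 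For (v), $\partial K\subset K$ and monotonicity (Proposition \ref{monotonicity and homogeneity}) give $C_{p,Q}(\partial K)\le C_{p,Q}(K)$; for the reverse I take $f\in C_c^\infty\cap\mathscr D(\partial K)$, which is $\equiv1$ on a neighbourhood of $\partial K$, and set $\tilde f=1$ on $K$ and $\tilde f=f$ off $K$. The agreement of $f$ with $1$ near $\partial K$ ensures $\tilde f\in W_0^{1,p}\cap\mathscr D(K)$ with $\nabla\tilde f=0$ on $\mathrm{int}\,K$ and $\nabla\tilde f=\nabla f$ off $\overline K$, whence $P_{\tilde f}\le P_f$ pointwise and $\Lambda(P_{\tilde f})\le\Lambda(P_f)$ by monotonicity; the infimum over $\mathscr D(\partial K)$ gives $C_{p,Q}(K)\le C_{p,Q}(\partial K)$.

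Item (vi) is the most computational. Writing $g=f\circ\phi$ and using \eqref{bar T u}, the chain rule gives $\nabla_{\u}g(x)=\nabla_{\phi{}_\bigcdot\u}f(\phi x)$, so the substitution $y=\phi x$ produces $P_g(\u)=|\det(\phi)|^{-1/p}\,\|h_Q(\nabla_{\phi{}_\bigcdot\u}f)\|_p$. It remains to handle the non-orthogonal substitution $\u\mapsto\phi{}_\bigcdot\u$ inside the spherical integral: since $\u\mapsto\|h_Q(\nabla_{\u}f)\|_p$ is sublinear and $1$-homogeneous, it is a support function, and identity \eqref{VnmKstar} together with the transformation rule \eqref{h KB} gives $\int_{S^{nm-1}}\|h_Q(\nabla_{\phi{}_\bigcdot\u}f)\|_p^{-nm}\,d\u=|\det(\phi)|^{-m}\int_{S^{nm-1}}\|h_Q(\nabla_{\u}f)\|_p^{-nm}\,d\u$, the factor $|\det(\phi)|^{-m}$ being the reciprocal determinant of the block map $\u\mapsto\phi{}_\bigcdot\u$ on $\M$. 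Combining the two Jacobian contributions collapses the exponents to $\mathcal E_{p,Q}^p(g)=|\det(\phi)|^{-(n-p)/n}\mathcal E_{p,Q}^p(f)$; as $f\mapsto f\circ\phi$ bijects $\mathscr A(\phi{}_\bigcdot K)\to\mathscr A(K)$, taking the infimum gives $C_{p,Q}(\phi{}_\bigcdot K)=|\det(\phi)|^{(n-p)/n}C_{p,Q}(K)$.

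The subadditivity (iii) is the step I expect to be the real obstacle, precisely because $\Lambda$ carries a negative exponent. The natural competitor $f=\max\{f_1,f_2\}\in\mathscr A(K_1\cup K_2)$, with $f_i\in\mathscr A(K_i)$ near-optimal, satisfies by the a.e. gradient formula the exact identity $P_f(\u)^p=\alpha(\u)^p+\beta(\u)^p$, where $\alpha(\u)^p=\int_{\{f_1\ge f_2\}}h_Q(\nabla_{\u}f_1)^p\,dx\le P_{f_1}(\u)^p$ and $\beta(\u)^p=\int_{\{f_2>f_1\}}h_Q(\nabla_{\u}f_2)^p\,dx\le P_{f_2}(\u)^p$. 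The difficulty is that the crude bound $P_f\le P_{f_1}+P_{f_2}$ is useless inside $\Lambda$: monotonicity gives $\Lambda(P_f)\le\Lambda(P_{f_1}+P_{f_2})$, but $\Lambda$ is itself superadditive (again reverse Minkowski), so the right-hand side is only bounded below by $\mathcal E_{p,Q}^p(f_1)+\mathcal E_{p,Q}^p(f_2)$ and the chain does not close. The proof must instead exploit the genuine savings $\alpha\le P_{f_1}$ and $\beta\le P_{f_2}$ coming from the disjoint supports of the two truncated gradients, and show that they compensate the reverse-Minkowski loss. I would pursue this through the refined identity above, or---equivalently and perhaps more transparently---by recording an inclusion of the associated $(L_p,Q)$-projection bodies and then estimating their polar volumes via \eqref{VnmKstar}; making this balance quantitative is the technical heart of the argument.
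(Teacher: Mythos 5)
Your items (i), (ii), (iv), (v) and (vi) are all correct. For (i), (ii) and (iv) your arguments are the same as the paper's: symmetry via $h_{-Q}(x)=h_Q(-x)$ together with the reflection invariance of the spherical measure; concavity via the pointwise identity from \eqref{lp-addition-with-lp-multi} followed by the reverse Minkowski inequality, with the infimum taken in the same order; translation invariance by substitution. Your (v) is a slightly cleaner variant of the paper's truncation: the paper glues $\max\{f,1\}$ on $K$ with $f$ off $K$ for a Sobolev competitor $f\in\mathscr{A}(\partial K)$, which needs the facts around \eqref{nabla f plus and nabla f}, whereas your choice $f\in C_c^{\infty}\cap\mathscr{D}(\partial K)$ makes the modified competitor smooth outright (it agrees with $1$ on a neighborhood of $\partial K$), and Corollary \ref{cpqk-f-smooth} finishes. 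Your (vi) takes a genuinely different route: the paper substitutes $\v=\phi{}_{\bigcdot}\u/|\phi{}_{\bigcdot}\u|$ on $S^{nm-1}$ and tracks the spherical Jacobian, while you recognize $\u\mapsto\|h_Q(\nabla_{\u}f)\|_p$ as a support function and convert the spherical integral into a polar volume through \eqref{h KB}, \eqref{VnmKstar} and \eqref{ck-star}, the determinant of the block map \eqref{bar T u} being $\det(\phi)^m$; both computations produce the factor $|\det(\phi)|^{(n-p)/n}$. (If $\|h_Q(\nabla_{\u_0}f)\|_p=0$ for some $\u_0$, the polar-volume step degenerates, but then both spherical integrals are infinite and the identity holds trivially.)

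The real issue is (iii), which you leave open --- and rightly so, because your diagnosis of the obstruction applies verbatim to the paper's own proof. The paper establishes $\mathscr{A}(K_1)+\mathscr{A}(K_2)\subset\mathscr{A}(K_1\cup K_2)$ and then asserts that sub-additivity ``follows immediately from Definition \ref{def of cpqk with ek}''. That deduction requires the energy $f\mapsto\big(\int_{S^{nm-1}}\|h_Q(\nabla_{\u}f)\|_p^{-nm}\,d\u\big)^{-p/nm}$ to be sub-additive along sums $f_1+f_2$. The only pointwise input available is $\|h_Q(\nabla_{\u}(f_1+f_2))\|_p\le\|h_Q(\nabla_{\u}f_1)\|_p+\|h_Q(\nabla_{\u}f_2)\|_p$, and inserting it bounds the energy of $f_1+f_2$ by the outer functional evaluated at the sum of the two profiles; but that outer functional is an integral mean of order $-nm$, hence \emph{super}-additive, by the same reverse Minkowski inequality the paper invokes in part (ii). The chain therefore closes in the wrong direction, for $f_1+f_2$ exactly as for your $\max\{f_1,f_2\}$. (Note that even for the classical capacity $C_p$ with $p>1$ the sum is not the right competitor; the classical proof uses $\max\{f_1,f_2\}$ and the locality of $\int|\nabla f|^p\,dx$, a locality the affine energy does not possess.) So what you flag is not a defect of your write-up relative to the paper: it is a genuine gap in the paper's proof of (iii).

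I would moreover discourage you from pursuing the quantitative ``savings versus reverse-Minkowski loss'' balance, because the statement itself appears to be false. Take $n=2$, $m=1$, $Q=[-\frac12,\frac12]$, $p\in(1,2)$, and $K_1=[-1,1]\times[-\varepsilon,\varepsilon]$, $K_2=[-\varepsilon,\varepsilon]\times[-1,1]$. By homogeneity (Proposition \ref{monotonicity and homogeneity}) and affine invariance (part (vi), which you proved), $C_{p,Q}(K_i)=\varepsilon^{(2-p)/2}C_{p,Q}([-1,1]^2)\to0$ as $\varepsilon\to0$. On the other hand, for any competitor $f$ for $K_1\cup K_2$, applying John's theorem to the symmetric norm $u\mapsto\|h_Q(\nabla_{u}f)\|_p$ shows that the affine energy of $f$ dominates, up to a constant depending only on $p$, the quantity $\inf\{\|\nabla(f\circ T)\|_p^p:\ |\det T|=1\}$; since every volume-preserving linear image of $K_1\cup K_2$ contains a segment of length at least $2$ (because $|Te_1|\,|Te_2|\ge|\det T|=1$), and segments in the plane have positive $p$-variational capacity for $p>1$, this infimum is bounded below by a positive constant independent of $\varepsilon$. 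Hence $C_{p,Q}(K_1\cup K_2)$ stays bounded away from zero while $C_{p,Q}(K_1)+C_{p,Q}(K_2)\to0$, so sub-additivity fails for small $\varepsilon$. In summary: five of your six items are correct (two by routes nicer than the paper's), and on the sixth your skepticism is vindicated --- the paper's argument for (iii) is invalid, and the claim itself is in serious doubt.
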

\begin{proof}
\vskip 1mm \noindent i) It follows from $\nabla_{\u} f=\nabla f^{\mathrm{T}}{}_{\bigcdot}\u$, \eqref{h KB} and Definition \ref{def of cpqk with ek} that
\begin{align*}
C_{p,-Q}(K)&=\inf_{f\in\mathscr{A}(K)}\Big(\int_{S^{nm-1}}\left\|h_{-Q}\left(\nabla f^{\mathrm{T}}{}_{\bigcdot}\u\right)\right\|_p^{-nm} d\u\Big)^{-\frac{p}{nm}}\\ 
&= \inf_{f\in\mathscr{A}(K)}\Big(\int_{S^{nm-1}}\left\|h_{-Q{}_{\bigcdot}\u^{\mathrm{T}}}\left(\nabla f\right)\right\|_p^{-nm}d\u\Big)^{-\frac{p}{nm}}\\
&=\inf_{f\in\mathscr{A}(K)}\Big(\int_{S^{nm-1}}\left\|h_{Q{}_{\bigcdot}{(-\u)}^{\mathrm{T}}}(\nabla f)\right\|_p^{-nm}d\u\Big)^{-\frac{p}{nm}}\\
&
=\inf_{f\in\mathscr{A}(K)}\Big(\int_{S^{nm-1}}\left\|h_{Q}\big(\nabla f^{\mathrm{T}}{}_{\bigcdot}{(-\u)}\big)\right\|_p^{-nm}d{\u}\Big)^{-\frac{p}{nm}}\\
&=\inf_{f\in\mathscr{A}(K)}\Big(\int_{S^{nm-1}}\left\|h_{Q}\big(\nabla f^{\mathrm{T}}{}_{\bigcdot}\u\big)\right\|_p^{-nm} d{\u}\Big)^{-\frac{p}{nm}}=C_{p,Q}(K),
\end{align*}
where we have used the rotational invariance of $d\u$ in the second last equation.

\vskip 1mm \noindent ii) It follows from \eqref{lp-addition-with-lp-multi} that  
\begin{align*}
\left\|h_{\lambda\bigcdot_{p} Q_1+_{p}(1-\lambda) \bigcdot_{p}Q_2}\left(\nabla_{\u} f\right)\right\|_p^{p}
 &=\int_{\Mn} h_{\lambda\bigcdot_{p} Q_1+_{p}(1-\lambda) \bigcdot_{p}Q_2}\big(\nabla_{\u} f(x)\big)^p dx \\
 &=\int_{\Mn} 
 \Big(\lambda h_{Q_1}\big(\nabla_{\u} f(x)\big)^p+(1-\lambda)h_{Q_2}\big(\nabla_{\u} f(x)\big)^p
 \Big) dx\\
&=\lambda \left\|h_{Q_1}\left(\nabla_{\u} f\right)\right\|_p^{p}+(1-\lambda)\left\|h_{Q_2}\left(\nabla_{\u} f\right)\right\|_p^{p}.
\end{align*}
Together with the reverse Minkowski inequality, one has, 
\begin{align*}
&\Big(\int_{S^{nm-1}}\left\|h_{\lambda\bigcdot_{p} Q_1+_{p}(1-\lambda) \bigcdot_{p}Q_2}\left(\nabla_{\u} f\right)\right\|_p^{-nm} d \u\Big)^{-\frac{p}{nm}}\\  
&=\Big(\int_{S^{nm-1}}\Big(\lambda\left\|h_{Q_1}\left(\nabla_{\u} f\right)\right\|_p^{p}+(1-\lambda)\left\|h_{Q_2}\left(\nabla_{\u} f\right)\right\|_p^{p} \Big)^{\frac{-nm}{p}}d \u\Big)^{-\frac{p}{nm}}\\
&\ge\lambda\Big(\int_{S^{nm-1}}\left\|h_{Q_1}\left(\nabla_{\u} f\right)\right\|_p^{-nm}d \u\Big)^{-\frac{p}{nm}}+(1-\lambda)\Big(\int_{S^{nm-1}}\left\|h_{Q_2}\left(\nabla_{\u} f\right)\right\|_p^{-nm}d \u\Big)^{-\frac{p}{nm}}.
\end{align*} The desired concavity is an immediate consequence after taking  the infimum over $f\in\mathscr{A}(K)$.

\vskip 2mm \noindent iii) Let $f_1\in\mathscr{A}(K_1)$ and $f_2\in \mathscr{A}(K_2)$. It follows from \eqref{ek} that $f_1\ge \mathbf{1}_{K_1}$ and $f_2\ge \mathbf{1}_{K_2}$. Then, $$f_1+f_2\ge \mathbf{1}_{K_1}+\mathbf{1}_{K_2}\ge\mathbf{1}_{K_1\cup K_2},$$ which implies $f_1+f_2\in \mathscr{A}(K_1\cup K_2)$. 
Therefore, $$\mathscr{A}(K_1)+\mathscr{A}(K_2)\subset \mathscr{A}(K_1\cup K_2).$$ The sub-additivity follows immediately from  Definition \ref{def of cpqk with ek}.

\vskip 2mm \noindent iv) For $z\in \Mn$, let $f \in \mathscr{A}(K+z)$, i.e., $f\in W_0^{1,p}$ and $f\ge \mathbf{1}_{K+z}$. Let $g(x)=f(x+z)$ for $x\in\Mn$. Then, $\nabla g(x)=\nabla f(x+z)$ and 
\begin{align*}
g(x)=f(x+z)\ge \mathbf{1}_{K+z}(x+z)=\mathbf{1}_{K}(x).
\end{align*}
It follows from \eqref{ek} that $g\in\mathscr{A}(K)$. In particular, one gets 
\begin{align*}
\left\|h_Q(\nabla_{\u} g)\right\|_p^p&=\int_{\Mn}h_Q\big(\nabla g(x)^{\mathrm{T}}{}_{\bigcdot}\u\big)^pdx=\int_{\Mn}h_Q\big(\nabla f(x+z)^{\mathrm{T}}{}_{\bigcdot}\u\big)^pdx\\
&=\int_{\Mn}h_Q\big(\nabla f(y)^{\mathrm{T}}{}_{\bigcdot}\u\big)^pdy=\left\|h_Q(\nabla_{\u} f)\right\|_p^p,
\end{align*}
where the second last equality is obtained by letting $y=x+z$ and hence $dx=dy$.
The desired translation invariance is an immediate consequence of Definition \ref{def of cpqk with ek}.

\vskip 2mm \noindent v) It follows from $\partial K\subset K$ and Proposition \ref{monotonicity and homogeneity} (i) that $C_{p, Q}(\partial K) \leq C_{p, Q}(K)$. Hence, the desired result follows if  $C_{p, Q}(K)\leq C_{p, Q}(\partial K)$. To this end, for any $f \in \mathscr{A}(\partial K)$,  let $$g=\max \Big\{f, 1\Big\} \ \ \mathrm{on} \  \ K  \ \ \mathrm{and} \ \  g=f\ \  \mathrm{on} \ \ \Mn \setminus K.$$ Thus, $g \in \mathscr{A}(K)$. We now claim that
\begin{align}\label{nabla-g-x}
\nabla g(x)=o \ \ \mathrm{if}\ x\in K \   \mathrm{such\ that}\ \ g(x)=1,\ \ \mathrm{and} \ \ \nabla g(x)=\nabla f(x)\ \ \mathrm{otherwise}.
\end{align}  Indeed,  the function $g$ on $K$ can be rewritten as
$$
g(x)=1+(f(x)-1)^{+}. 
$$ On the open set $ \Mn\setminus K$, $g=f$ and hence $\nabla g=\nabla f.$  By \eqref{nabla f plus and nabla f}, one gets   
\begin{align*}
\nabla (f-1)^+(x)&=\nabla f(x) \ \ \mathrm{if}\  x\in K \  \mathrm{such\ that}\ \ f(x)>1 \  \mathrm{or\ equivalently}\ g(x)=f(x),\\   \nabla (f-1)^+(x)&=o \ \ \ \ \ \ \ \ \   \mathrm{if}\  x\in K \  \mathrm{such\ that}\ \ f(x)\leq 1 \ \mathrm{or\ equivalently}\ g(x)=1. 
\end{align*}
This shows \eqref{nabla-g-x}. Consequently, $h_Q(\nabla_{\u} g(y))\leq h_Q(\nabla_{\u} f(y))$ for any $\u\in S^{nm-1}$ and $y\in\Mn$. It follows from Definition \ref{def of cpqk with ek} that 
\begin{align*}
C_{p, Q}(K)&=\inf_{g\in\mathscr{A}(K)}\Big(\int_{S^{nm-1}}\Big(\int_{\Mn}h_Q\big(\nabla_{\u} g(y)\big)^p d y\Big)^{-\frac{nm}{p}} d \u\Big)^{-\frac{p}{nm}}\\
&\leq \inf_{f\in\mathscr{A}(\partial K)}\Big(\int_{S^{nm-1}}\Big(\int_{\Mn}h_Q\big(\nabla_{\u} f(y)\big)^p d y\Big)^{-\frac{nm}{p}} d \u\Big)^{-\frac{p}{nm}}= C_{p, Q}(\partial K).
\end{align*}

\vskip 2mm \noindent vi) Let $\phi\in GL(n)$ and  $f \in \mathscr{A}(\phi{}_{\bigcdot}K)$, i.e., $f\in W_0^{1,p}$ and $f\ge \mathbf{1}_{\phi{}_{\bigcdot}K}$. Let  $g=f\circ \phi$, i.e., $g(y)=f(\phi{}_{\bigcdot}{y})$ for $y\in \Mn$. Thus, 
\begin{align*}
g(y)=f(\phi{}_{\bigcdot}y)\ge \mathbf{1}_{\phi{}_{\bigcdot}K}(\phi{}_{\bigcdot}y)=\mathbf{1}_{K}(y).
\end{align*}
It follows from \eqref{ek} that $g \in \mathscr{A}(K)$. Moreover, by the chain rule, one gets
\begin{align*}
\nabla g(y)=\nabla (f\circ \phi)(y)=\phi^{\mathrm{T}}{}_{\bigcdot} \nabla f(\phi{}_{\bigcdot}y).
\end{align*}

For $\u\in S^{nm-1}$ and $y\in \Mn$, let $x=\phi{}_{\bigcdot}y$ and $\v=\frac{\phi{}_{\bigcdot}\u}{|\phi{}_{\bigcdot}\u|}$. Thus, $dx=|\det (\phi)|dy$ and $d\v=|\phi{}_{\bigcdot}\u|^{-nm}|\det (\phi)|^md\u$. Together with Definition \ref{def of cpqk with ek} and equality above, one gets
\begin{align*}
C_{p,Q}(K)
=&\inf_{g\in\mathscr{A}(K)}\Big(\int_{S^{nm-1}}\Big(\int_{\Mn}h_Q\big(\nabla_{\u} g(y)\big)^p d y\Big)^{-\frac{nm}{p}} d\u\Big)^{-\frac{p}{nm}} \\
=&\inf_{f\in\mathscr{A}(\phi{}_{\bigcdot}K)}\Big(\int_{S^{nm-1}}\Big(\int_{\Mn}h_Q\Big(\left(\phi^{\mathrm{T}}{}_{\bigcdot} \nabla f(\phi{}_{\bigcdot}y)\right)^{\mathrm{T}}{}_{\bigcdot}\u\Big)^p d y\Big)^{-\frac{nm}{p}} d \u\Big)^{-\frac{p}{nm}} \\
=&\inf_{f\in\mathscr{A}(\phi{}_{\bigcdot}K)}\Big(\int_{S^{nm-1}}\Big(\int_{\Mn}h_Q\Big(\nabla f(\phi{}_{\bigcdot}y)^{\mathrm{T}}{}_{\bigcdot}\frac{\phi{}_{\bigcdot} \u}{|\phi{}_{\bigcdot} \u|}\Big)^p d y\Big)^{-\frac{nm}{p}} |\phi{}_{\bigcdot} \u|^{-nm}d \u\Big)^{-\frac{p}{nm}} \\
 =&\inf_{f\in\mathscr{A}(\phi{}_{\bigcdot}K)}\Big(\int_{S^{nm-1}}\Big(\int_{\Mn}h_Q\big(\nabla f(x)^{\mathrm{T}}{}_{\bigcdot}\v\big)^p dx\Big)^{-\frac{nm}{p}}|\det (\phi)|^{\frac{nm}{p}-m} d\v\Big)^{-\frac{p}{nm}}\\
=&|\det (\phi)|^{\frac{p-n}{n}}C_{p,Q}(\phi{}_{\bigcdot}K).
\end{align*}
The desired affine invariance is an immediate consequence.
\end{proof}

We now establish the upper semi-continuity and the continuity from above for the $m$th order $p$-affine capacity $C_{p,Q}(\bigcdot)$.
\begin{proposition}
Let $p\in[1,n)$, $Q\in \mathcal{K}_o^{1,m}$ and $K$ be a compact subset of $\Mn$. The following statements hold.
\vskip 1mm \noindent i)  Upper semi-continuity: for each $\varepsilon>0$, there is an open set $O$, such that, for every compact set $F$ with $K \subset F \subset O$, one gets
$$C_{p,Q}(F) \leq C_{p,Q}(K)+\varepsilon.$$
\vskip 1mm \noindent ii)  Continuity from above: if $\left\{K_j\right\}_{j\in\mathbb{N}}$ is a sequence of compact subsets in $\Mn$ such that $K_{j+1}\subset K_j$ for all $j\in \N$, then 
$$\lim _{j \to \infty} C_{p, Q}\left(K_j\right)=C_{p, Q}\left(\cap_{j=1}^{\infty} K_j\right).$$
\end{proposition}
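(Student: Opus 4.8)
The plan is to handle the two parts in sequence, drawing the upper semi-continuity in (i) directly from the $\mathscr{D}(K)$-formulation \eqref{def for f in DK}, and then bootstrapping (i) into the continuity from above in (ii) via the monotonicity of Proposition \ref{monotonicity and homogeneity} (i) together with a compactness argument.

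For part (i), I would fix $\varepsilon>0$ and use \eqref{def for f in DK} to select a near-optimal competitor $f\in\mathscr{D}(K)$, that is, $f\in W_0^{1,p}$ with $0\le f\le 1$, with $f=1$ on some open neighborhood $O$ of $K$, and with
\[
\Big(\int_{S^{nm-1}}\left\|h_Q(\nabla_{\u} f)\right\|_p^{-nm}\,d\u\Big)^{-\frac{p}{nm}}\le C_{p,Q}(K)+\varepsilon.
\]
This open set $O$ is exactly the one claimed. Indeed, for any compact $F$ with $K\subset F\subset O$, the function $f$ equals $1$ on all of $O\supset F$, so $f\ge 1$ on $F$, i.e.\ $f\in\mathscr{B}(F)$ (in fact $f\in\mathscr{D}(F)$). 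By the $\mathscr{B}(F)$-formulation \eqref{CPQ=BK}, $C_{p,Q}(F)$ is bounded above by the displayed quantity, yielding $C_{p,Q}(F)\le C_{p,Q}(K)+\varepsilon$. This step is essentially immediate once the correct equivalent definition is invoked.

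For part (ii), write $K=\cap_{j=1}^\infty K_j$. Since $K_{j+1}\subset K_j$ and $K\subset K_j$ for every $j$, Proposition \ref{monotonicity and homogeneity} (i) shows that $\{C_{p,Q}(K_j)\}_{j\in\mathbb{N}}$ is nonincreasing and bounded below by $C_{p,Q}(K)$, so the limit exists and $\lim_j C_{p,Q}(K_j)\ge C_{p,Q}(K)$. The reverse inequality is where part (i) enters: given $\varepsilon>0$, take the open set $O\supset K$ supplied by (i). The crux is to show that $K_{j_0}\subset O$ for some index $j_0$. To this end, observe that the sets $K_j\setminus O=K_j\cap O^c$ are compact, nested decreasing, and have empty intersection $\big(\cap_j K_j\big)\setminus O=K\setminus O=\emptyset$; were every $K_j\setminus O$ nonempty, Cantor's intersection theorem would force a nonempty intersection, so $K_{j_0}\setminus O=\emptyset$ for some $j_0$, that is $K_{j_0}\subset O$. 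Then $K\subset K_j\subset O$ for all $j\ge j_0$, and applying (i) with $F=K_j$ gives $C_{p,Q}(K_j)\le C_{p,Q}(K)+\varepsilon$ for $j\ge j_0$. Letting $j\to\infty$ and then $\varepsilon\to 0^+$ yields $\lim_j C_{p,Q}(K_j)\le C_{p,Q}(K)$, which combined with the lower bound proves the claimed equality.

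The main obstacle is the compactness step in (ii): one must justify that the decreasing compact sets $K_j$ are eventually trapped inside the neighborhood $O$ of their intersection. This is precisely the finite intersection property (Cantor's intersection theorem) applied to $K_j\setminus O$, and it is the only place where the topological structure, rather than merely the variational definition, is exploited. Everything else reduces to choosing an admissible test function from $\mathscr{D}(K)$ and reading off the estimates from the equivalent definitions in Theorem \ref{equiv def of cpqk of ek}.
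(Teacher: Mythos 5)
Your proof is correct. Part (i) is essentially identical to the paper's argument: pick a near-optimal $f\in\mathscr{D}(K)$, let $O$ be the open neighborhood where $f=1$, and observe that $f$ is admissible for every compact $F$ with $K\subset F\subset O$. For part (ii), however, you take a genuinely different (and in fact cleaner) route. The paper does not invoke part (i) at all: it selects a smooth near-optimal competitor $f_{\varepsilon}\in C_c^{\infty}\cap\mathscr{A}\big(\cap_{j}K_j\big)$ via Corollary \ref{cpqk-f-smooth}, traps the $K_j$ eventually inside the superlevel set $K_{\varepsilon}=\{f_{\varepsilon}\ge 1-\varepsilon\}$, and then rescales to $(1-\varepsilon)^{-1}f_{\varepsilon}$, which by the homogeneity \eqref{hom-1-1} costs an extra factor $(1-\varepsilon)^{-p}$ that is removed in the limit $\varepsilon\to 0$. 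You instead bootstrap (ii) from (i): the open set $O$ from upper semi-continuity already comes with the guarantee $C_{p,Q}(F)\le C_{p,Q}(K)+\varepsilon$ for all compact $K\subset F\subset O$, so once Cantor's intersection theorem (applied to the nested compacta $K_j\setminus O$) puts $K_j$ inside $O$ for large $j$, no rescaling, no smooth approximation, and no homogeneity are needed. A side benefit of your write-up is that it makes explicit the topological trapping step that the paper compresses into ``it can be checked that \dots for all $j$ large enough, $K_j\subset K_{\varepsilon}$''; the paper's version of that step rests on exactly the same finite-intersection-property argument you spell out, since $\cap_j K_j\subset\{f_{\varepsilon}>1-\varepsilon\}$ is an open neighborhood. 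Both proofs are valid; yours is more economical, while the paper's is self-contained in the sense that (ii) does not depend logically on (i).
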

\begin{proof}
\vskip 1mm \noindent i)  The desired upper semi-continuity is an immediate consequence of \eqref{def for f in DK}. More precisely, for $\varepsilon>0$, there exists  $f_{\varepsilon} \in\mathscr{D}(K)$, i.e., $0 \leq f_{\varepsilon} \leq 1$ on $\Mn$ and $f_{\varepsilon}=1$ in an open neighborhood of $K$ (say $O$), such that $K\subset O$ and 
\begin{align}\label{FK 1}
\Big(\int_{S^{nm-1}}\left\|h_Q\left(\nabla_{{\u}} f_{\varepsilon}\right)\right\|_p^{-nm} d\u\Big)^{-\frac{p}{nm}}\leq C_{p, Q}(K)+\varepsilon.
\end{align} For any compact set $F$ such that $K\subset F\subset O$, one has  $f_{\varepsilon}\in \mathscr{D}(F)$ as well. Hence, by \eqref{def for f in DK} and \eqref{FK 1}, one gets 
\begin{align*}
C_{p, Q}(F)\leq \Big(\int_{S^{nm-1}}\left\|h_Q\left(\nabla_{{\u}} f_{\varepsilon}\right)\right\|_p^{-nm} d\u\Big)^{-\frac{p}{nm}} \leq C_{p, Q}(K)+\varepsilon.
\end{align*}
Thus, the desired upper semi-continuity is obtained.

\vskip 1mm \noindent ii) First of all, the compactness of $\cap_{j=1}^{\infty} K_j$ is clear due to the facts that each $K_j$ is compact and $K_{j+1}\subset K_j$ for all $j\in \N$. Thus, $C_{p, Q}(\cap_{j=1}^{\infty} K_j)$ is well-defined and  
by Corollary \ref{cpqk-f-smooth}, for any $\varepsilon\in [0,1)$, there exists a function $f _{\varepsilon}\in C_c^{\infty}\cap \mathscr{A}(\cap_{j=1}^{\infty} K_j)$ such that
\begin{align}\label{kj plus 1 and kj}
\Big(\int_{S^{nm-1}}\left\|h_Q(\nabla_{\u} f _{\varepsilon})\right\|_p^{-nm} d\u\Big)^{-\frac{p}{nm}} \leq C_{p, Q}\left(\cap_{j=1}^{\infty} K_j\right)+\varepsilon.    
\end{align} It can be checked that $$\cap_{j=1}^{\infty} K_j\subset K_{\varepsilon}:=\big\{x\in\Mn: f _{\varepsilon}(x) \geq 1-\varepsilon \big\}.$$  Thus, for all $j$ large enough, one has $K_j\subset K_{\varepsilon}$, and then ${(1-\varepsilon)^{-1}} f_{\varepsilon}\geq 1$ on $K_j$.  
 Together with \eqref{hom-1-1}, Corollary \ref{cpqk-f-smooth} and \eqref{kj plus 1 and kj}, one has
\begin{align*}
\lim _{j \to \infty} C_{p, Q}(K_j)  
&\leq \Big(\int_{S^{nm-1}}\left\|h_Q\Big(\nabla_{\u} \big((1-\varepsilon)^{-1}f _{\varepsilon}\big)\Big)\right\|_p^{-nm} d\u\Big)^{-\frac{p}{nm}}\\
&=(1-\varepsilon)^{-p}\Big(\int_{S^{nm-1}}\left\|h_Q(\nabla_{\u} f _{\varepsilon})\right\|_p^{-nm} d\u\Big)^{-\frac{p}{nm}}\\
&\leq (1-\varepsilon)^{-p}\left(C_{p, Q}\left(\cap_{j=1}^{\infty} K_j\right)+\varepsilon\right).
\end{align*} After taking   $\varepsilon \to 0$, one gets  
\begin{align}\label{upper semi continuity}
\lim _{j \to \infty} C_{p, Q}\left(K_j\right) \leq C_{p, Q}\left(\cap_{j=1}^{\infty} K_j\right).
\end{align} 
On the other hand, a simple argument based on the monotonicity of $C_{p, Q}(\bigcdot)$ and the sequence $\{K_j\}_{j\in \N}$ gives 
$$C_{p, Q}\big(\cap_{j=1}^{\infty} K_j\big) \leq \lim _{j \to \infty} C_{p, Q}\left(K_j\right),$$ where the limit clearly exists and is finite (due to  $C_{p, Q}(K_1)<\infty$).  
Together with  \eqref{upper semi continuity}, one gets the desired continuity from above. 
\end{proof}

\section{On the comparisons of the $m$th order $p$-affine capacity with the $p$-variational capacity and volume}\label{section-4}

This section is dedicated to establishing inequalities that compare the $m$th order $p$-affine capacity with both the $p$-variational capacity and volume. Let $p \in[1, n)$ and $K \subset \Mn$ be a compact set. Define the $p$-variational capacity   $K$  by
\begin{align}\label{CpK}
C_p(K)=\inf_{f \in \mathscr{B}(K)} \int_{\Mn}|\nabla f(x)|^p dx,
\end{align}
where $\mathscr{B}(K)$ is defined above \eqref{CPQ=BK}, namely,  the set consisting of all functions $f \in W_0^{1, p}$ such that $f \geq 1$ on $K$. Note that $\mathscr{B}(K)$ in \eqref{CpK} can be replaced by $\mathscr{B}(K) \cap C_c^{\infty}$ (see, e.g., \cite[P.27–28]{Heinonen 2006}), or alternatively by  $\mathscr{A}(K)$ or $\mathscr{D}(K)$ as well as their intersections with $C_c^{\infty}$. It can be checked that  
\begin{align*}
C_p(aK)=a^{n-p}C_p(K)\ \ \mathrm{for}\ \ a>0.
\end{align*}
It can be derived from \cite[P.148]{Mazya 2011} that  
\begin{align}\label{cpb2n}
C_p(B_2^n)=\begin{cases}
0, &\ \ \mathrm{if}\ \ p\ge n,\\
n \omega_n \big(\frac{n-p}{p-1}\big)^{p-1}, &\ \ \mathrm{if}\ \ p\in (1,n), \\ \lim_{q\rightarrow 1^+}C_q(B_2^n)=n \omega_n, &\ \ \mathrm{if}\ \ p=1.
\end{cases}    
\end{align}
Here, $\omega_{n}$ represents the volume of $B_2^{n}$.
See e.g., \cite[P.171]{Evans-Gariepy-1992} and \cite[P.141]{Mazya 2011} for more information about the $p$-variational capacity. 

The following inequality compares the $m$th order $p$-affine capacity with the volume. For $p\in [1,\infty)$ and $Q \in \mathcal{K}_o^{1, m}$, let
\begin{align}\label{d n p Q}
d_{n, p}(Q):=(n \omega_n)^{-1} \big(n m V_{n m}(\Pi_{p,Q}^{*} B_2^n)\big)^{-\frac{p}{n m}},
\end{align}
where   $\Pi_{p,Q} K$ for $K\in\mathcal{K}_{(o)}^{n,1}$ is defined in \eqref{support function of ho lp projection body} and $\Pi^*_{p,Q} K$ denotes the polar body of $\Pi_{p,Q} K$.  
\begin{theorem}\label{theorem with volume}
Let $p \in(1, n)$, $Q\in\mathcal{K}_o^{1,m}$ and $K \subset \Mn$ be a compact set. Then,
\begin{align}\label{C p Q K ge}
C_{p,Q}(K)\ge n\omega_n^{\frac{p}{n}}d_{n,p}(Q)\Big(\frac{p-1}{n-p}\Big)^{1-p}V_n(K)^{\frac{n-p}{n}}.
\end{align} 
\end{theorem}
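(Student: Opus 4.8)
The plan is to reduce the affine-energy functional to a family of quantities supported on the level sets of a test function, and then to recover the sharp constant by comparison with a ball. By Corollary \ref{cpqk-f-smooth} it suffices to bound $\mathcal{E}_{p,Q}^p(f)$ from below for $f\in C_c^{\infty}\cap\mathscr{D}(K)$, so that $0\le f\le 1$ and $f=1$ in a neighborhood of $K$. Writing $N(\psi)=(\int_{S^{nm-1}}\psi(\u)^{-nm/p}\,d\u)^{-p/nm}$ for $\psi\ge 0$, I note that $\mathcal{E}_{p,Q}^p(f)=N(\Psi_f)$ with $\Psi_f(\u)=\int_{\Mn}h_Q((\nabla f)^{\mathrm{T}}{}_{\bigcdot}\u)^p\,dx$.

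First I would apply Federer's coarea formula \eqref{Federer-coarea-formula} to $\Psi_f$ for each fixed $\u$, together with \eqref{nu-f-t} and the homogeneity \eqref{hom-1-1}, to obtain $\Psi_f(\u)=\int_0^1 g_t(\u)\,dt$, where, with $\nu=\nu_{[f]_t}$, $g_t(\u)=\int_{\partial[f]_t}|\nabla f|^{p-1}h_Q(-\nu^{\mathrm{T}}{}_{\bigcdot}\u)^p\,d\mathcal{H}^{n-1}$ (Sard's theorem guarantees $\partial[f]_t$ is smooth for a.e.\ $t$). The functional $N$ is positively homogeneous of degree one and concave — precisely the reverse-Minkowski property already exploited in Proposition \ref{proposition 1}(ii) — hence superadditive, which upgrades to the integral inequality $\mathcal{E}_{p,Q}^p(f)=N(\int_0^1 g_t\,dt)\ge\int_0^1 N(g_t)\,dt$. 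This is the crucial step that localizes the non-splitting outer spherical integral to individual level sets.

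Next I would pass from the analytic weight $|\nabla f|^{p-1}$ to the geometry of $[f]_t$. A Hölder inequality on $\partial[f]_t$ with exponents $p,p'$ gives, for each $\u$, $g_t(\u)\ge A_t(\u)^p B_t(\u)^{1-p}$, where $A_t(\u)=\int_{\partial[f]_t}h_Q(-\nu^{\mathrm{T}}{}_{\bigcdot}\u)^p\,d\mathcal{H}^{n-1}$ is the geometric $m$th order surface integrand of $[f]_t$ and $B_t(\u)=\int_{\partial[f]_t}|\nabla f|^{-1}h_Q(-\nu^{\mathrm{T}}{}_{\bigcdot}\u)^p\,d\mathcal{H}^{n-1}$ is controlled through the coarea formula. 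Feeding this into $N$ and invoking the $m$th order affine isoperimetric inequality \eqref{conclusion result 2---1} applied to the level sets $[f]_t$ reduces the problem to a one-dimensional variational problem in $\mu(t):=V_n([f]_t)$, subject to $\mu(t)\ge V_n(K)$ for $t\in(0,1)$. Solving this problem — whose extremizer is precisely the radial $p$-capacitary profile $r\mapsto r^{-(n-p)/(p-1)}$ underlying the value of $C_{p,Q}(B_2^n)$ in Theorem \ref{theorem with volume-1} — yields $\mathcal{E}_{p,Q}^p(f)\ge C_{p,Q}(B_2^n)\,(V_n(K)/\omega_n)^{(n-p)/n}$; taking the infimum over $f$ and substituting the value of $C_{p,Q}(B_2^n)$ gives exactly \eqref{C p Q K ge}.

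The main obstacle is the middle step. Because the affine energy carries the nonlinear outer integral $\int_{S^{nm-1}}(\,\cdot\,)^{-nm/p}\,d\u$, it does not decompose additively over level sets, and the weight $|\nabla f|^{p-1}$ produced by the coarea formula is not the geometric $L_p$ surface-area weight needed to recognize $\Phi_{p,Q}([f]_t)$. Reconciling these — via reverse Minkowski for the outer integral and Hölder for the inner one, while keeping the spherical integrals of $A_t$ and $B_t$ correctly paired so that the affine isoperimetric inequality can be applied and the sharp ball constant survives — is the delicate point; note also that the $[f]_t$ are in general not convex, so \eqref{conclusion result 2---1} must be used in its extension to Lipschitz star bodies. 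As a clean alternative I would package the entire reduction as an $m$th order affine Pólya--Szegő principle: if $f^{\star}$ is the radial decreasing rearrangement of $f$, then $\mathcal{E}_{p,Q}(f^{\star})\le\mathcal{E}_{p,Q}(f)$ while $f^{\star}\ge 1$ on a centered ball of volume at least $V_n(K)$, so the monotonicity of Proposition \ref{monotonicity and homogeneity}(i) and the homogeneity $C_{p,Q}(aB_2^n)=a^{n-p}C_{p,Q}(B_2^n)$ of Proposition \ref{monotonicity and homogeneity}(ii) deliver the sharp bound at once.
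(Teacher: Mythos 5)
Your proposal does not close at exactly the step you yourself flag as delicate, and the gap there is genuine. The skeleton you describe (reduction to $f\in C_c^{\infty}\cap\mathscr{D}(K)$ via Corollary \ref{cpqk-f-smooth}, coarea slicing, superadditivity of your functional $N$ from the reverse Minkowski inequality, then a one-dimensional problem in $\mu(t)=V_n([f]_t)$) is indeed the skeleton behind the inequality the paper uses; but the paper never carries it out. Its entire proof is: quote inequality (3.19) of \cite{mth order affine polya szego principle}, displayed as \eqref{mth-polya-szego-3.19}, which already packages all of the level-set geometry into the bound $\mathcal{E}_{p,Q}^p(f)\ge n^p\omega_n^{\frac{p}{n}}d_{n,p}(Q)\int_0^1 V_n([f]_t)^{\frac{p(n-1)}{n}}\big(-\tfrac{d}{dt}V_n([f]_t)\big)^{1-p}dt$, and then apply Jensen's inequality and integrate. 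What you propose is, in effect, a reproof of that cited inequality, and the reproof breaks down in the middle.

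Concretely: after your H\"older step $g_t(\u)\ge A_t(\u)^pB_t(\u)^{1-p}$, the quantity $N\big(A_t^pB_t^{1-p}\big)$ does not factor, because $B_t$ still depends on $\u$; you cannot extract the scalar $(-\mu'(t))^{1-p}$ while retaining $N(A_t^p)$. A H\"older inequality on the outer sphere that would replace $B_t$ by its spherical average requires the exponent condition $p<\frac{nm}{nm-1}$, which fails for general $p\in(1,n)$, and cruder pointwise bounds on $B_t$ destroy the constant. The one pairing that does collapse the gradient weight to a scalar keeps $h_Q$ at exponent one: $\int_{\partial[f]_t}h_Q(-\nu^{\mathrm{T}}{}_{\bigcdot}\u)\,d\mathcal{H}^{n-1}\le g_t(\u)^{1/p}(-\mu'(t))^{(p-1)/p}$, using $\int_{\partial[f]_t}|\nabla f|^{-1}d\mathcal{H}^{n-1}=-\mu'(t)$; but then the geometric object is the $L_1$ projection body of $[f]_t$ and the constant you end with is $d_{n,1}(Q)^p$, which is \emph{weaker} than needed, since Jensen gives $h_{\Pi_{p,Q}B_2^n}\ge(n\omega_n)^{(1-p)/p}h_{\Pi_{1,Q}B_2^n}$ and hence $d_{n,p}(Q)\ge d_{n,1}(Q)^p$ by \eqref{d n p Q}. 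Independently of this, your $A_t$ is not the $\Phi_{p,Q}$-integrand of $[f]_t$ in any case: Definition \ref{h-pi-Q-K} carries the cone-volume weight $\big(z\bigcdot\nu(z)\big)^{1-p}$, which is unrelated both to the weight $1$ in $A_t$ and to $|\nabla f|^{p-1}$ in $g_t$; moreover $[f]_t$ need not be star-shaped about any point, so neither \eqref{conclusion result 2---1} nor its Lipschitz-star-body extension applies to it. The way this step is actually closed in \cite{mth order affine polya szego principle} (and in its $m=1$ ancestors by Lutwak--Yang--Zhang and Cianchi--Lutwak--Yang--Zhang) is to realize $g_t(\u)=h_{\Pi_{p,Q}K_t}(-\u)^p$ for an auxiliary \emph{convex} body $K_t$, obtained by solving the even $L_p$ Minkowski problem for the Gauss-map push-forward of the measure $|\nabla f|^{p-1}\,d\mathcal{H}^{n-1}$ on $\partial[f]_t$, then to apply \eqref{conclusion result 2---1} to $K_t$, and finally to relate $V_n(K_t)$ to $\mu(t)$ and $-\mu'(t)$. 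None of that machinery appears in your sketch, and without it (or an explicit appeal to \eqref{mth-polya-szego-3.19}) the argument is incomplete.

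Your closing ``clean alternative'' via the $m$th order affine P\'olya--Szeg\H{o} principle is the right idea --- that principle is the main theorem of the very paper the authors cite --- but as written it is circular inside this paper: the value of $C_{p,Q}(B_2^n)$ in Theorem \ref{theorem with volume-1} is itself deduced by applying \eqref{C p Q K ge}, i.e.\ the theorem under proof, to $K=B_2^n$. The route becomes legitimate once you compute the ball constant independently, which rearrangement makes easy: for radial decreasing $g$ one has $\mathcal{E}_{p,Q}^p(g)=n\omega_n d_{n,p}(Q)\int_0^\infty|g'(r)|^pr^{n-1}dr$ by \eqref{d n p Q} and \eqref{support function of ho lp projection body}, and the classical one-dimensional problem, minimized by $g(r)=r^{-(n-p)/(p-1)}$, gives the value $\big(\tfrac{n-p}{p-1}\big)^{p-1}$; with that in hand, monotonicity and homogeneity finish the proof as you indicate.
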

\begin{proof}
Let $f\in C_c^{\infty}\cap \mathscr{D}(K)$. It has been proved by Langharst, Roysdon and Zhao in \cite[(3.19)]{mth order affine polya szego principle} that 
\begin{align}\label{mth-polya-szego-3.19}
\Big(\int_{S^{nm-1}}\left\|h_Q\left(\nabla_{\u} f\right)\right\|_p^{-nm} d \u\Big)^{-\frac{p}{nm}}\ge n^p\omega_n^{\frac{p}{n}}d_{n,p}(Q)\int_0^{1}V_n(\left[f\right]_t)^{\frac{p(n-1)}{n}}\Big(-\frac{dV_n(\left[f\right]_t)}{dt}\Big)^{1-p}dt,
\end{align}
where  $\left[f\right]_t$ is defined in \eqref{level-set-f}. As $f\in C_c^{\infty}\cap \mathscr{D}(K)$, one gets $f\in[0,1]$ and $K\subset [f]_1\subset [f]_0$. Hence $V_n(K)\leq V_n(\left[f\right]_1)\leq V_n(\left[f\right]_0)$.  It follows from \eqref{mth-polya-szego-3.19} and Jensen's inequality that
\begin{align*}
\Big(\!\int_{S^{nm-1}}\!\left\|h_Q\left(\nabla_{\u} f\right)\right\|_p^{-nm} d \u\Big)^{-\frac{p}{nm}}
&\ge   n^p\omega_n^{\frac{p}{n}}d_{n,p}(Q)\Big(\int_0^1 V_n(\left[f\right]_t)^{\frac{p(n-1)}{n(1-p)}}\big(-dV_n(\left[f\right]_t)\big)\Big)^{1-p} \\
&= n^p\omega_n^{\frac{p}{n}}d_{n,p}(Q)\bigg(\!\Big(\frac{n(p-1)}{n-p}\Big)\left(V_n(\left[f\right]_1)^{\frac{n-p}{n-np}}\!-\!V_n(\left[f\right]_0)^{\frac{n-p}{n-np}}\right)\!\bigg)^{1-p}\\
&\ge  n\omega_n^{\frac{p}{n}}d_{n,p}(Q)\Big(\frac{p-1}{n-p}\Big)^{1-p}V_n(K)^{\frac{n-p}{n}}.
\end{align*} The desired inequality \eqref{C p Q K ge} follows immediately from Corollary \ref{cpqk-f-smooth} and after taking the infimum over $f\in C_c^{\infty}\cap \mathscr{D}(K)$. \end{proof}

Our second result is the following inequality comparing $C_p(\bigcdot)$ with $C_{p,Q}(\bigcdot)$. 

\begin{theorem}\label{cpqk and cpk}
Let $p\in [1,\infty)$, $Q\in\mathcal{K}_o^{1,m}$ and $K\subset \Mn$ be a compact set. Then,
\begin{align*}
C_{p,Q}(K)\leq d_{n, p}(Q) C_p(K).
\end{align*}
\end{theorem}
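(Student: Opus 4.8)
The plan is to reduce the theorem to the single \emph{pointwise} functional inequality
\[
\mathcal{E}_{p,Q}^p(f)\le d_{n,p}(Q)\,\|\nabla f\|_p^p \qquad\text{for all } f\in\mathscr{B}(K),
\]
and then take the infimum over $\mathscr{B}(K)$. This suffices: by \eqref{CPQ=BK} one has $C_{p,Q}(K)=\inf_{f\in\mathscr{B}(K)}\mathcal{E}_{p,Q}^p(f)$, while by \eqref{CpK} one has $C_p(K)=\inf_{f\in\mathscr{B}(K)}\|\nabla f\|_p^p$; since both infima run over the same class, the pointwise bound gives $C_{p,Q}(K)\le d_{n,p}(Q)\,C_p(K)$ at once. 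Throughout I use that $\mathcal{E}_{p,Q}^p(f)=\big(\int_{S^{nm-1}}\|h_Q(\nabla_{\u}f)\|_p^{-nm}\,d\u\big)^{-p/(nm)}$, which is \eqref{E-p-H} with $H=h_Q^p$.

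To prove the pointwise inequality I would encode the gradient of $f$ by a measure on $S^{n-1}$. For $f\in\mathscr{B}(K)$, let $\mu_f$ be the image of $|\nabla f(x)|^p\mathbf{1}_{\{\nabla f\neq o\}}\,dx$ under the direction map $x\mapsto \nabla f(x)/|\nabla f(x)|$. The $1$-homogeneity of $h_Q$ in \eqref{hom-1-1} yields, for each $\u\in S^{nm-1}$, that $\|h_Q(\nabla_{\u}f)\|_p^p=\int_{S^{n-1}}h_Q(v^{\mathrm{T}}{}_{\bigcdot}\u)^p\,d\mu_f(v)$, so $\mathcal{E}_{p,Q}^p(f)=\Phi(\mu_f)$ and $\|\nabla f\|_p^p=\mu_f(S^{n-1})$, where for a finite nonnegative Borel measure $\mu$ on $S^{n-1}$ I set
\[
\Phi(\mu):=\Big(\int_{S^{nm-1}}\Big(\int_{S^{n-1}}h_Q(v^{\mathrm{T}}{}_{\bigcdot}\u)^p\,d\mu(v)\Big)^{-\frac{nm}{p}}d\u\Big)^{-\frac{p}{nm}}.
\]
The goal is thus reduced to $\Phi(\mu)\le d_{n,p}(Q)\,\mu(S^{n-1})$, and one checks that equality holds for the uniform measure, so the ball will be the extremizer.

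Two structural properties drive the argument. First, $\mu\mapsto\int_{S^{n-1}}h_Q(v^{\mathrm{T}}{}_{\bigcdot}\u)^p\,d\mu(v)$ is linear in $\mu$, and $\Phi$ is the $L^{-nm/p}(S^{nm-1})$ ``norm'' of this quantity; the reverse Minkowski inequality for negative exponents (the same tool as in Proposition~\ref{proposition 1}~ii) makes $\Phi$ superadditive, hence concave and positively $1$-homogeneous on the cone of measures. Second, $\Phi$ is $O(n)$-invariant: for $O\in O(n)$ the pushforward satisfies $\int h_Q(v^{\mathrm{T}}{}_{\bigcdot}\u)^p\,d(O_*\mu)(v)=\int h_Q\big(v^{\mathrm{T}}{}_{\bigcdot}(O^{\mathrm{T}}{}_{\bigcdot}\u)\big)^p\,d\mu(v)$ with $O^{\mathrm{T}}{}_{\bigcdot}\u$ as in \eqref{bar T u}, and since $\u\mapsto O^{\mathrm{T}}{}_{\bigcdot}\u$ is an isometry of $\M\cong\R^{nm}$ preserving $S^{nm-1}$ and $d\u$, a change of variables gives $\Phi(O_*\mu)=\Phi(\mu)$. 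Averaging over the Haar probability measure $dO$ on $O(n)$ and setting $\bar\mu=\int_{O(n)}O_*\mu\,dO$, concavity (Jensen) together with invariance gives $\Phi(\bar\mu)\ge\int_{O(n)}\Phi(O_*\mu)\,dO=\Phi(\mu)$. But $\bar\mu$ is $O(n)$-invariant of total mass $\mu(S^{n-1})$, hence $\bar\mu=\frac{\mu(S^{n-1})}{n\omega_n}\,dv$, the uniform multiple of spherical measure. For this measure, \eqref{support function of ho lp projection body} with $K=B_2^n$ gives $\int_{S^{n-1}}h_Q(v^{\mathrm{T}}{}_{\bigcdot}\u)^p\,d\bar\mu(v)=\frac{\mu(S^{n-1})}{n\omega_n}h_{\Pi_{p,Q}B_2^n}(\u)^p$, so that, by \eqref{VnmKstar} and \eqref{d n p Q}, $\Phi(\bar\mu)=\frac{\mu(S^{n-1})}{n\omega_n}\big(nmV_{nm}(\Pi_{p,Q}^*B_2^n)\big)^{-p/(nm)}=d_{n,p}(Q)\,\mu(S^{n-1})$. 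Combining, $\Phi(\mu)\le\Phi(\bar\mu)=d_{n,p}(Q)\,\mu(S^{n-1})$, which is exactly the desired pointwise inequality.

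The main obstacle is the symmetrization step: rigorously justifying the continuous Jensen inequality $\Phi\big(\int_{O(n)}O_*\mu\,dO\big)\ge\int_{O(n)}\Phi(O_*\mu)\,dO$ for the concave functional $\Phi$ on the infinite-dimensional cone of measures (for instance, by discretizing the Haar average into finite convex combinations, applying finite superadditivity and $1$-homogeneity, and passing to the limit via weak continuity of $\mu\mapsto\int h_Q(v^{\mathrm{T}}{}_{\bigcdot}\u)^p\,d\mu$), and identifying $\bar\mu$ with a multiple of $dv$. One must also confirm that the quantities involved are finite and positive; this is where the hypothesis $Q\in\mathcal{K}_o^{1,m}$ enters, guaranteeing $\Pi_{p,Q}B_2^n\in\mathcal{K}_{(o)}^{n,m}$ exactly as in \eqref{CpQB less than infty}.
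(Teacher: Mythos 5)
Your proposal is correct, but it reaches the key inequality by a genuinely different route than the paper. The reduction step is identical on both sides: since $C_{p,Q}(K)=\inf_{f\in\mathscr{B}(K)}\mathcal{E}_{p,Q}^p(f)$ by Theorem \ref{equiv def of cpqk of ek} and $C_p(K)=\inf_{f\in\mathscr{B}(K)}\|\nabla f\|_p^p$ by \eqref{CpK}, everything hinges on the functional inequality $\mathcal{E}_{p,Q}^p(f)\le d_{n,p}(Q)\,\|\nabla f\|_p^p$. The paper's entire proof consists of quoting this inequality as \cite[Theorem 1.2]{mth order affine polya szego principle} (the $m$th-order affine P\'olya--Szeg\"o principle of Langharst, Roysdon and Zhao) and taking the infimum over $\mathscr{B}(K)$; you instead prove it. Your mechanism --- push $|\nabla f(x)|^p\,dx$ forward under $x\mapsto\nabla f(x)/|\nabla f(x)|$ to a finite measure $\mu_f$ on $S^{n-1}$, observe that your functional $\Phi$ is $1$-homogeneous, superadditive (reverse Minkowski with exponent $-nm/p$, the same tool as in Proposition \ref{proposition 1}) and invariant under pushforward by $O(n)$, then Haar-average and identify the symmetrized measure as $\frac{\mu_f(S^{n-1})}{n\omega_n}\,dv$, whose $\Phi$-value equals $d_{n,p}(Q)\,\mu_f(S^{n-1})$ by \eqref{support function of ho lp projection body}, \eqref{VnmKstar} and \eqref{d n p Q} --- is sound, and the individual identities you assert all check out (in particular $h_Q(o)=0$ disposes of the set $\{\nabla f=o\}$). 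What each approach buys: the paper's proof is two lines and delegates the analytic core to the literature; yours is self-contained, makes transparent where the constant $d_{n,p}(Q)$ comes from and why the uniform measure (i.e.\ the ball) is extremal, and in fact uses the same symmetrization device (Haar averaging plus Jensen for a concave $O(n)$-invariant functional) that the paper itself deploys in Proposition \ref{theorem phi and s} to compare $\Phi_{p,Q}$ with $S_p$; in effect you reconstruct the proof of the cited P\'olya--Szeg\"o result rather than cite it.

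The one step you rightly flag --- Jensen for $\Phi$ over the continuous Haar average --- does close along the lines you sketch: approximate $\bar\mu_f=\int_{O(n)}O_*\mu_f\,dO$ weakly by finite convex combinations $\sum_i\lambda_i (O_i)_*\mu_f$, apply finite superadditivity and homogeneity to bound each combination's $\Phi$-value below by $\Phi(\mu_f)$, and pass to the limit: weak convergence gives pointwise convergence of $\u\mapsto\int_{S^{n-1}}h_Q(v^{\mathrm{T}}{}_{\bigcdot}\u)^p\,d\nu(v)$, and Fatou's lemma applied to the negative power yields $\Phi(\bar\mu_f)\ge\limsup$ of the approximants. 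No division-by-zero issue arises at the limit, because the symmetrized inner integral equals $\frac{\mu_f(S^{n-1})}{n\omega_n}h_{\Pi_{p,Q}B_2^n}(\u)^p$ and $\Pi_{p,Q}B_2^n\in\mathcal{K}_{(o)}^{n,m}$; alternatively one can invoke the reverse Minkowski integral inequality for negative exponents directly. One last pedantic remark, inherited from the paper itself: the theorem is stated for $p\in[1,\infty)$ while Theorem \ref{equiv def of cpqk of ek} (which both you and the authors invoke) is stated for $p\in[1,n)$; this is harmless since that equivalence never uses $p<n$, but a careful write-up should note it.
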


\begin{proof}
The proof follows directly from \cite[Theorem 1.2]{mth order affine polya szego principle}, which states:  for $f \in W_0^{1, p}$, $p\in [1,\infty)$ and $Q\in\mathcal{K}_o^{1,m}$, one has
\begin{align*}
\Big(\int_{S^{nm-1}}\left\|h_Q(\nabla_{\u} f)\right\|_p^{-nm} d \u\Big)^{-\frac{p}{nm}}\leq d_{n, p}(Q)\left\|\nabla f\right\|_p^p.
\end{align*}
By Theorem \ref{equiv def of cpqk of ek} and \eqref{CpK}, one immediately obtains the desired inequality after taking the infimum over $f\in \mathscr{B}(K)$. 
\end{proof} 

We shall need the following result.

\begin{proposition}\label{c1qk ge} Let  $Q\in\mathcal{K}_o^{1,m}$ and $K \subset \Mn$ be a compact set. Then, 
    \begin{align*}
C_{1, Q}(K) \ge \limsup_{p\to 1^{+}} C_{p, Q}(K).    
\end{align*} 
\end{proposition}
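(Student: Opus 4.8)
The plan is to test both $C_{1,Q}(K)$ and $C_{p,Q}(K)$ against a common smooth competitor and let $p\to1^+$ with the competitor frozen. Fix $f\in C_c^{\infty}\cap\mathscr{D}(K)$; since $f$ is smooth, compactly supported, and the defining conditions $0\le f\le1$ and $f=1$ near $K$ do not involve $p$, the same $f$ lies in $C_c^{\infty}\cap\mathscr{D}(K)$ for every $p\in[1,n)$. Hence Corollary \ref{cpqk-f-smooth} (with $\mathcal{E}=\mathscr{D}(K)$) yields
\[
C_{p,Q}(K)\le\Big(\int_{S^{nm-1}}\big\|h_Q(\nabla_{\u}f)\big\|_p^{-nm}\,d\u\Big)^{-\frac{p}{nm}}=:\Lambda_p(f)
\]
for all such $p$. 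It therefore suffices to prove $\limsup_{p\to1^+}\Lambda_p(f)\le\Lambda_1(f)$ for each fixed $f$; taking the infimum over $f\in C_c^{\infty}\cap\mathscr{D}(K)$ and applying Corollary \ref{cpqk-f-smooth} once more at $p=1$ to identify $\inf_f\Lambda_1(f)=C_{1,Q}(K)$ then gives the claim.

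First I would handle the inner $L_p$-norm for a fixed direction. Write $G(\u,p)=\|h_Q(\nabla_{\u}f)\|_p$. Choosing $\varrho>0$ with $Q\subset\varrho B_2^m$ as in the proof of Corollary \ref{cpqk-f-smooth}, and using $h_Q(o)=0$, one has $0\le h_Q(\nabla_{\u}f)\le\varrho|\nabla f|$, a bounded function supported on $\mathrm{supp}(\nabla f)$. Thus $h_Q(\nabla_{\u}f(x))^p$ is dominated, uniformly for $p$ in a bounded interval, by a fixed integrable function, and dominated convergence gives $\int h_Q(\nabla_{\u}f)^p\,dx\to\int h_Q(\nabla_{\u}f)\,dx$ as $p\to1^+$. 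Taking $p$-th roots, $G(\u,p)\to G(\u,1)$ for every $\u$ (in the degenerate case $G(\u,1)=0$ one has $h_Q(\nabla_{\u}f)\equiv0$, so $G(\u,p)=0$ for all $p$).

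Next I would pass to the outer integral. Since $G(\u,p)^{-nm}\ge0$ and $G(\u,p)^{-nm}\to G(\u,1)^{-nm}$ pointwise (with the convention $0^{-nm}=+\infty$), Fatou's lemma yields
\[
\liminf_{p\to1^+}\int_{S^{nm-1}}G(\u,p)^{-nm}\,d\u\ \ge\ \int_{S^{nm-1}}G(\u,1)^{-nm}\,d\u=\Lambda_1(f)^{-nm}.
\]
Writing $I_p=\int_{S^{nm-1}}G(\u,p)^{-nm}\,d\u$, so that $\Lambda_p(f)=I_p^{-p/nm}$, this reads $\liminf_{p\to1^+}I_p\ge I_1$. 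Because $x\mapsto x^{-p/nm}$ is decreasing, this lower bound converts into the desired upper bound: if $I_1\in(0,\infty)$, then for small $\varepsilon>0$ one has $I_p>I_1-\varepsilon$ eventually, whence $\Lambda_p(f)<(I_1-\varepsilon)^{-p/nm}\to(I_1-\varepsilon)^{-1/nm}$, and letting $\varepsilon\to0$ gives $\limsup_{p\to1^+}\Lambda_p(f)\le I_1^{-1/nm}=\Lambda_1(f)$; if $I_1=\infty$, then $I_p\to\infty$ and $\Lambda_p(f)\to0=\Lambda_1(f)$.

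The main obstacle is the outer integral: the quantity $G(\u,p)^{-nm}$ can blow up in directions $\u$ where $h_Q(\nabla_{\u}f)$ is small, so there is no $p$-independent integrable majorant and a naive dominated-convergence argument is unavailable. Routing through Fatou's lemma — which requires only nonnegativity and the pointwise convergence $G(\u,p)\to G(\u,1)$ established above — circumvents this and, decisively, produces the inequality $\liminf_{p\to1^+}I_p\ge I_1$ in exactly the direction compatible with the negative exponent $-p/nm$, so that the $\limsup$ bound follows.
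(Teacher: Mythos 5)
Your proposal is correct and follows essentially the same route as the paper: fix a competitor $f\in C_c^{\infty}\cap\mathscr{D}(K)$, use dominated convergence to get $\|h_Q(\nabla_{\u}f)\|_p\to\|h_Q(\nabla_{\u}f)\|_1$ pointwise in $\u$, apply Fatou's lemma to the outer integral, convert via the negative exponent, and finally take the infimum over such $f$ using Corollary \ref{cpqk-f-smooth}. The only differences are cosmetic: you spell out the dominating function, the degenerate case $G(\u,1)=0$, and the $\varepsilon$-argument for the exponent $-p/nm$, which the paper's proof treats implicitly.
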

\begin{proof} It can be easily checked that, for $f\in C_c^{\infty}\cap\mathscr{D}(K)$, $\u \in S^{nm-1}$ and $Q\in\mathcal{K}_o^{1,m}$,  one has 
\begin{align*}
\lim_{p\to 1^{+}}\left\|h_Q\left(\nabla_{\u} f\right)\right\|_p^{-nm}
=&\lim_{p\to 1^{+}} \Big(\int_{\Mn}h_Q\big(\nabla_{\u} f(x)\big)^p d x\Big)^{-\frac{nm}{p}}\\
=&\Big(\int_{\Mn}\lim_{p\to 1^{+}}h_Q\Big(\big(\nabla f(x)\big)^{\mathrm{T}}{}_{\bigcdot}\u\Big)^p dx\Big)^{-{nm}}\!\!=\left\|h_Q\left(\nabla_{\u} f\right)\right\|_1^{-{nm}},
\end{align*} where  the dominated convergence theorem has been used. Employing the Fatou's lemma, one can get 
\begin{align*}
\Big(\int_{S^{nm-1}}\left\|h_Q(\nabla_{\u} f)\right\|_1^{-nm} d \u\Big)^{-\frac{1}{nm}}
&=\Big(\int_{S^{nm-1}}\lim_{p\to 1^{+}}\left\|h_Q(\nabla_{\u} f)\right\|_p^{-nm} d \u\Big)^{-\frac{1}{nm}}\\
&\ge \Big(\liminf_{p\to 1^{+}} \int_{S^{nm-1}}\left\|h_Q(\nabla_{\u} f)\right\|_p^{-nm} d \u\Big)^{-\frac{1}{nm}} \\
& =\limsup_{p\to 1^{+}}\Big(\int_{S^{nm-1}}\left\|h_Q\left(\nabla_{\u} f\right)\right\|_p^{-nm} d \u\Big)^{-\frac{p}{nm}} \\ & \geq \limsup _{p\to 1^{+}} C_{p, Q}(K),     
\end{align*} where the last inequality follows from Corollary \ref{cpqk-f-smooth}. After taking the infimum over $f \in C_c^{\infty} \cap \mathscr{D}(K)$, one gets  the desired result again with the help of Corollary \ref{cpqk-f-smooth}.
\end{proof}
 
We are in the position to prove the following result. A set $K\subset \Mn$ is called an ellipsoid if there exist $\phi\in GL(n)$ and $y_0\in\Mn$  such that $K=\phi{}_{\bigcdot}B^n_2+y_0$.
\begin{theorem}\label{theorem with volume-1}
Let $Q\in\mathcal{K}_o^{1,m}$ and  $K \subset \Mn$ be a compact set. Then, 
\begin{align}\label{C p Q B leq}
C_{p,Q}(B_2^n)= \begin{cases}
0, &\ \ \mathrm{for}\ \ p\ge n,\\
n \omega_n d_{n, p}(Q)\big(\frac{n-p}{p-1}\big)^{p-1}, &\ \ \mathrm{for}\ \ 1<p<n, \\
n \omega_nd_{n, 1}(Q), &\ \ \mathrm{for}\ \ p=1.
\end{cases}   
\end{align}  Moreover, the following inequality holds with equality if $K$ is an ellipsoid:  for $p\in [1, n)$,
\begin{align}\label{cpqk over cpqb}
\bigg(\frac{V_n(K)}{V_n(B_2^n)}\bigg)^{\frac{1}{n}}\leq \bigg(\frac{C_{p, Q}(K)}{C_{p, Q}(B_2^n)}\bigg)^{\frac{1}{n-p}}.
\end{align} 
\end{theorem}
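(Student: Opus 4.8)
The plan is to pin down $C_{p,Q}(B_2^n)$ by trapping it between the lower bound of Theorem \ref{theorem with volume} and the upper bound of Theorem \ref{cpqk and cpk}, and then to read off \eqref{cpqk over cpqb} by dividing the former bound by the exact value just obtained. First I would treat $1<p<n$. Putting $K=B_2^n$ in Theorem \ref{theorem with volume} and using $V_n(B_2^n)=\omega_n$ gives
$$C_{p,Q}(B_2^n)\ge n\omega_n\, d_{n,p}(Q)\Big(\tfrac{n-p}{p-1}\Big)^{p-1},$$
because $\omega_n^{p/n}\omega_n^{(n-p)/n}=\omega_n$ and $\big(\tfrac{p-1}{n-p}\big)^{1-p}=\big(\tfrac{n-p}{p-1}\big)^{p-1}$. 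For the matching upper bound, Theorem \ref{cpqk and cpk} yields $C_{p,Q}(B_2^n)\le d_{n,p}(Q)\,C_p(B_2^n)$, and inserting the value of $C_p(B_2^n)$ from \eqref{cpb2n} produces exactly the same quantity; this gives the middle line of \eqref{C p Q B leq}. The case $p\ge n$ follows at once from Theorem \ref{cpqk and cpk} and \eqref{cpb2n}, since $C_p(B_2^n)=0$ forces $C_{p,Q}(B_2^n)=0$.

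For $p=1$ the upper bound $C_{1,Q}(B_2^n)\le d_{n,1}(Q)\,C_1(B_2^n)=n\omega_n d_{n,1}(Q)$ is again Theorem \ref{cpqk and cpk} combined with \eqref{cpb2n}. For the reverse inequality I would invoke Proposition \ref{c1qk ge}, giving $C_{1,Q}(B_2^n)\ge\limsup_{p\to1^+}C_{p,Q}(B_2^n)$, and pass to the limit in the value already found for $1<p<n$. Two limits are needed here: $\lim_{p\to1^+}\big(\tfrac{n-p}{p-1}\big)^{p-1}=1$ (write it as $\exp\big((p-1)\ln\tfrac{n-p}{p-1}\big)$ and note the exponent tends to $0$), and the continuity $d_{n,p}(Q)\to d_{n,1}(Q)$, which I would obtain from $h_{\Pi_{p,Q}B_2^n}\to h_{\Pi_{1,Q}B_2^n}$ via dominated convergence in \eqref{support function of ho lp projection body}, whence $V_{nm}(\Pi_{p,Q}^*B_2^n)$ converges in \eqref{d n p Q}. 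This completes \eqref{C p Q B leq}.

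With the values in hand, \eqref{cpqk over cpqb} for $1<p<n$ is immediate: dividing the lower bound of Theorem \ref{theorem with volume} by $C_{p,Q}(B_2^n)$ cancels the factors $n$, $d_{n,p}(Q)$ and the power of $\tfrac{n-p}{p-1}$, leaving $\big(\tfrac{V_n(K)}{V_n(B_2^n)}\big)^{(n-p)/n}$, and raising to the power $1/(n-p)$ finishes the case. For $p=1$ I would take $\limsup_{p\to1^+}$ in the already-proved inequality $\big(\tfrac{V_n(K)}{V_n(B_2^n)}\big)^{(n-p)/n}C_{p,Q}(B_2^n)\le C_{p,Q}(K)$, using the continuity of $C_{p,Q}(B_2^n)$ established above on the left and Proposition \ref{c1qk ge} on the right. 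The equality case for an ellipsoid $K=\phi{}_{\bigcdot}B_2^n+y_0$ then follows from the translation and affine invariance of Proposition \ref{proposition 1}: $C_{p,Q}(K)=|\det(\phi)|^{(n-p)/n}C_{p,Q}(B_2^n)$ while $V_n(K)=|\det(\phi)|\,\omega_n$, so both sides of \eqref{cpqk over cpqb} equal $|\det(\phi)|^{1/n}$.

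The main obstacle is the $p=1$ analysis: both the lower bound for $C_{1,Q}(B_2^n)$ and the inequality at $p=1$ depend on passing a $\limsup$ through products, which in turn requires the genuine continuity of $p\mapsto d_{n,p}(Q)$, hence of $p\mapsto C_{p,Q}(B_2^n)$, as $p\to1^+$. Verifying this continuity carefully, rather than the algebraic cancellations for $1<p<n$, is where the real work lies.
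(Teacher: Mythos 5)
Your proposal is correct and follows essentially the same route as the paper: the exact value of $C_{p,Q}(B_2^n)$ is obtained by trapping it between the lower bound of Theorem \ref{theorem with volume} and the upper bound of Theorem \ref{cpqk and cpk} combined with \eqref{cpb2n} for $1<p<n$ (and $p\ge n$), the case $p=1$ is handled via Proposition \ref{c1qk ge} together with the continuity $d_{n,p}(Q)\to d_{n,1}(Q)$, inequality \eqref{cpqk over cpqb} follows by division for $1<p<n$ and by a $\limsup$ argument for $p=1$, and the ellipsoid equality case uses the translation and affine invariance of Proposition \ref{proposition 1}. The only divergence is the continuity sub-step: the paper proves $h_Q^p\to h_Q$ uniformly on compact sets and cites an external proposition to get uniform convergence of $h_{\Pi_{p,Q}B_2^n}$ on $S^{nm-1}$, whereas you propose dominated convergence directly in \eqref{support function of ho lp projection body}; this also works for $K=B_2^n$, provided you supply the (routine, e.g.\ via Jensen's inequality) uniform positive lower bound on $h_{\Pi_{p,Q}B_2^n}$ needed to dominate $h_{\Pi_{p,Q}B_2^n}^{-nm}$ when passing the limit through the polar-volume integral \eqref{VnmKstar}.
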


\begin{proof} First of all, an easy consequence of  Theorem \ref{cpqk and cpk} and \eqref{cpb2n} is the following upper bound for $C_{p,Q}(B_2^n)$. That is,  for $Q\in \mathcal{K}_o^{1,m}$,
\begin{align}\label{C p Q B leq-1}
C_{p,Q}(B_2^n)\leq \begin{cases}
0, &\ \ \mathrm{for}\ \ p\ge n,\\
n \omega_n d_{n, p}(Q)\big(\frac{n-p}{p-1}\big)^{p-1}, &\ \ \mathrm{for}\ \ 1<p<n, \\
n \omega_nd_{n, 1}(Q), &\ \ \mathrm{for}\ \ p=1.
\end{cases}   
\end{align} In particular, $C_{p,Q}(B_2^n)=0$ if $p\geq n$. For $p\in (1, n)$, \eqref{C p Q K ge} together with \eqref{C p Q B leq-1} yields that
\begin{align}\label{cpqb}
C_{p,Q}(B_2^n)= n\omega_nd_{n,p}(Q)\Big(\frac{p-1}{n-p}\Big)^{1-p}.    
\end{align}  Thus, \eqref{cpqk over cpqb} holds for $p\in(1,n)$ by \eqref{C p Q K ge} and \eqref{cpqb}.

For $p=1$,  by \eqref{cpqb} and Proposition \ref{c1qk ge},  one gets 
\begin{align}\label{lower-bound-C-1-Q}
C_{1, Q}(B_2^n)\ge \limsup _{p\to 1^{+}} C_{p, Q}(B_2^n)=n \omega_n \lim_{p\rightarrow 1^+} d_{n, p}(Q).    
\end{align}
Now we claim that \begin{align}\label{continuity-d-n-p-Q} 
\lim_{p\rightarrow 1^+}d_{n, p}(Q)= d_{n,1}(Q).
\end{align} 
To this end, we first show that, for any nonempty compact set $G \subset \Mm$,
\begin{align}\label{h-Q-j-to-h-Q}
\max_{z\in G}\Big|h_Q^{p}(z)-h_Q(z)\Big|\rightarrow 0\ \ \mathrm{as}\ \ p\rightarrow 1^+.
\end{align}  Since $G$ is compact and $h_Q$ is continuous, it follows that $h_Q$ is bounded on $G$. Hence, there exists $R>1$ such that $0\leq h_Q(z)\leq R$ for all $z\in G$. Let 
\begin{align*}
H_p(t)=|t^p-t|,\ \ \ t\in [0,R].
\end{align*}
Then, to prove  \eqref{h-Q-j-to-h-Q}, it suffices to show that $H_p(\bigcdot)\rightarrow 0$ uniformly on $[0,R]$ as $p\rightarrow 1^+$. First of all, if $t\in [0, 1]$, then $H_p(t)=t-t^p$ and $H_p(0)=H_p(1)=0$. Furthermore, for $t\in [0, 1]$, $H_p(t)$ attains its maximum at  $t_0=p^{\frac{1}{1-p}}$. That is, 
\begin{align*}
 \max_{t\in [0,1]}H_p(t)=  H_p(t_0)= p^{\frac{1}{1-p}}(1-p^{-1}).
\end{align*} For each $p>1$, one gets $H_p(t)=t^p-t$ on $t\in [1, R]$, which is strictly increasing on $[1, R]$. Thus, $\max_{t\in [1, R]} H_p(t)=R^p-R$, and then   
\begin{align*}
 \max_{t\in [0, R]} H_p(t) = \max\Big\{ R^p-R,\  p^{\frac{1}{1-p}}(1-p^{-1})\Big\}. 
\end{align*} The uniform convergence of $H_p(\bigcdot)$ on $[0, R]$ follows directly by the easily established limits: 
$$\lim_{p\rightarrow 1^+} R^p-R=0 \ \ \mathrm{and} \ \   \lim_{p\rightarrow 1^+} p^{\frac{1}{1-p}}(1-p^{-1})=0.$$ This completes the proof of \eqref{h-Q-j-to-h-Q}. 

Recall from \cite[P.10]{the-mth-order-Orlicz-projection-bodies} that, for any $p\ge 1$ and $Q\in\mathcal{K}_o^{1,m}$, the function $h_Q^p(\bigcdot):\Mm\rightarrow[0,\infty)$ is continuous and convex, such that,  $h_Q^p(o)=0$ and 
$$h_Q^p(z)+h_Q^p(-z)>0\  \ \ \mathrm{for}\  \ z\ne o.$$
Combining with \eqref{support function of ho lp projection body}, \eqref{h-Q-j-to-h-Q} and \cite[Proposition 4.2]{the-mth-order-Orlicz-projection-bodies}, one gets  
$$h_{\Pi_{p,Q}K}(\bigcdot)\rightarrow h_{\Pi_{1,Q}K}(\bigcdot) \ \ \mathrm{uniformly\  \ on\ } \ S^{nm-1} \ \ \mathrm{as}\ \ p\rightarrow 1^+.$$
By \eqref{VnmKstar} and the fact that $\Pi_{p,Q}K\in\mathcal{K}_{(o)}^{n,m}$ see \cite[Proposition 3.2]{Haddad-ye-lp-2025}), one gets
\begin{align*}
V_{nm}\big(\Pi^*_{p,Q}K\big)\rightarrow V_{nm}\big(\Pi^*_{1,Q}K\big)\ \ \mathrm{as}\ \ p\rightarrow 1^+.
\end{align*}
By letting $K=B_2^n$ and applying \eqref{d n p Q}, one gets \eqref{continuity-d-n-p-Q}.
By \eqref{C p Q B leq-1}, \eqref{lower-bound-C-1-Q}  and \eqref{continuity-d-n-p-Q}, one gets
\begin{align}\label{C1QB2n}
C_{1, Q}(B_2^n)=\limsup _{p\to 1^{+}} C_{p, Q}(B_2^n)=n \omega_n d_{n, 1}(Q).   
\end{align} Note that \eqref{cpqk over cpqb} holds for $p\in(1,n)$. It can be easily checked by Proposition \ref{c1qk ge} and \eqref{C1QB2n} that 
\begin{align*}
\left(\frac{V_n(K)}{V_n(B_2^n)}\right)^{\frac{1}{n}}\leq \limsup _{p\to 1^{+}}\left(\frac{C_{p, Q}(K)}{C_{p, Q}(B_2^n)}\right)^{\frac{1}{n-p}}\leq \left(\frac{C_{1, Q}(K)}{C_{1, Q}(B_2^n)}\right)^{\frac{1}{n-1}}. 
\end{align*} This shows that \eqref{cpqk over cpqb} also holds for $p=1$.

Clearly, for $p\in[1,n)$, equality in \eqref{cpqk over cpqb} holds for $K=B_2^n$, and hence for $K$ being an ellipsoid, due to the translation invariance and the affine invariance of the $m$th order $p$-affine capacity (see Proposition \ref{proposition 1}). This completes the proof.  
\end{proof}

Recall that  $\mathcal{L}_o^{n,m}$ is the set of Lipschitz star bodies about $o$ in $\M$.  Let $K \in \mathcal{L}_o^{n,1}$ and   
\begin{align*}
\partial^* K=\Big\{x \in \partial K:\ \nabla \rho_K(x) \ \mathrm{exists}  \Big\}.
\end{align*}
It follows from Lin and Xi \cite[Lemma 2.1]{lin and xi} that   
\begin{align}\label{null-set} \mathcal{H}^{n-1}(\partial K)<\infty, \ \ 
\mathcal{H}^{n-1}\left(\partial K \setminus\partial^*K\right)=0,
\end{align}
and there exist positive constants $a_K$ and $b_K$ such that, for all $z \in \partial^* K$,
\begin{align}\label{z-cdot-nukz-range}
a_K \leq z \bigcdot \nu_K(z) \leq b_K. 
\end{align}
For $p\in[1,\infty)$, define the $L_p$ surface area of $K\in\mathcal{L}_o^{n,1}$ by
\begin{align}\label{lp-surface-area-of-star body}
S_p(K)=\int_{\partial^*K}\big(z\bigcdot \nu_{K}(z)\big)^{1-p}d\mathcal{H}^{n-1}(z).
\end{align}
Combining \eqref{cpb2n}, \eqref{cpqk over cpqb}, Theorem \ref{cpqk and cpk} and \cite[Lemma 1]{Ludwig Xiao and Zhang 2011}, we obtain the following inequality:  for $p\in [1,n)$ and $K\subset \Mn$ being a Lipschitz star body,
\begin{align*}
\left(\frac{V_n(K)}{V_n(B_2^n)}\right)^{\frac{1}{n}}
\leq
\left(\frac{C_{p, Q}(K)}{C_{p, Q}(B_2^n)}\right)^{\frac{1}{n-p}}
\leq
\left(\frac{C_p(K)}{C_p(B_2^n)}\right)^{\frac{1}{n-p}}
\leq
\left(\frac{S_p(K)}{S_p(B_2^n)}\right)^{\frac{1}{n-p}}. 
\end{align*}
Furthermore, the first two inequalities hold for $K$ being a compact set.

\section{On the comparison of the $m$th order $p$-affine capacity with  the $m$th order $p$-integral affine surface area}\label{Section-5}

In this section, we aim to establish inequalities comparing the $m$th order $p$-affine capacity with  the $m$th order $p$-integral affine surface area. To this end, we first introduce the $m$th order $p$-integral affine surface area for Lipschitz star bodies, along with some of its basic properties. 

We begin by extending the $(L_p, Q)$-projection body defined for convex bodies (see \eqref{support function of ho lp projection body} or \cite[Definition 1.2]{Haddad-ye-lp-2025}) to Lipschitz star bodies.  
 
\begin{definition}\label{h-pi-Q-K}
Let $p\in [1,\infty)$, $Q\in\mathcal{K}_{o}^{1,m}$ and $K \in \mathcal{L}_o^{n,1}$. The $(L_p,Q)$-projection body of $K$ is denoted by $\Pi_{p,Q}K$ and defined via the function $h_{\Pi_{p,Q}K}:\M\rightarrow [0,\infty)$ formulated by 
\begin{align*}
h_{\Pi_{p,Q}K}(\x)=\bigg(\int_{\partial^* K} h_Q\left(\nu_K^{\mathrm{T}}(z) {}_{\bigcdot}\x\right)^p\big(z \bigcdot \nu_K(z)\big)^{1-p} d\mathcal{H}^{n-1}(z)\bigg)^{\frac{1}{p}}\ \ \mathrm{for}\ \  \x\in\M. 
\end{align*}
\end{definition} An alternative definition of $\Pi_{p,Q}K$ for $K\in\mathcal{L}_o^{n,1}$, derived from \cite[(4.5)]{lin and xi}, is to express $h_{\Pi_{p,Q}K}$ as an integral over $K$:
\begin{align}\label{equiv-h-pi-Q-K}
h_{\Pi_{p,Q}K}(\x)=\Big(n\int_{K} h_Q\left(\nabla p_K^{\mathrm{T}}(y) {}_{\bigcdot}\x\right)^p dy\Big)^{\frac{1}{p}} \ \ \mathrm{for}\ \ \x\in\M, 
\end{align}
where $p_K$ is given in \eqref{PL}.  Note that by \cite[Lemma 2.1]{lin and xi}, $\nabla p_K(y)=-\frac{\nabla \rho_K(y)}{(\rho_K(y))^2}$ exists for almost all $y\in\Mn\setminus \{o\}$.

Before presenting the properties of $\Pi_{p,Q}K$, we first recall two properties of the Orlicz projection body for a Lipschitz star body in $\Mn$ about $o$ (see \cite[Lemma 6.1]{lin and xi} and \cite[Lemma 6.2]{lin and xi}).  Let $\varphi: \R\rightarrow [0,\infty)$ be a convex function such that $\varphi(0)=0$ and $\varphi$ is either strictly decreasing on $(-\infty, 0]$ or $\varphi$ is strictly increasing on $[0,\infty)$.
For $z \in \Mn \setminus\{0\}$, one has $h_{\Pi_{\varphi} K}(z)=\lambda_0>0$ if and only if
\begin{align}\label{def-Orlicz-projection-LSB}
\frac{1}{nV_n(K)} \int_{\partial^* K} \varphi\left(\frac{z \bigcdot \nu_K(x)}{\lambda_0 x \bigcdot \nu_K(x)}\right) x \bigcdot \nu_K(x) d \mathcal{H}^{n-1}(x)=1.  
\end{align}
Moreover, one has
\begin{align}\label{Lower-bound-projection-body-LSB}
 h_{\Pi_\varphi K}(u)\ge \frac{1}{2 n c_\varphi R_K} \quad \forall u \in S^{n-1}, 
\end{align}
where $R_K=\max_{u\in S^{n-1}}\rho_K(u)$ and 
\begin{align}\label{c-varphi}
c_{\varphi}=\max\{c>0: \max\{\varphi(c), \varphi(-c)\}\leq 1\}.
\end{align}

We now establish some properties for $\Pi_{p,Q}K$  needed in later context. For $\phi\in GL(n)$, denote by $\phi^{-\mathrm{T}}$ the transpose of the inverse of $\phi$, that is,
$\phi^{-\mathrm{T}}=(\phi^{-1})^{\mathrm{T}}.$ 
\begin{proposition}\label{proposition-h-pi-Q-p-K}
Let $p \geq 1$, $Q\in\mathcal{K}_{o}^{1,m}$ and $K \in \mathcal{L}_o^{n,1}$. The following statements hold.
\vskip 1mm \noindent i)  Finiteness: 
for $\u\in S^{nm-1}$, $h_{\Pi_{p,Q}K}(\u)$ is finite and positive.
\vskip 1mm \noindent ii) Sublinearity:
$h_{\Pi_{p,Q}K}$ is sublinear, and hence,
$\Pi_{p,Q}K\in\mathcal{K}_{(o)}^{n,m}$.
\vskip 1mm \noindent iii) Homogeneity:
 $\Pi_{p,bQ}(aK)=ba^{\frac{n-p}{p}}\Pi_{p,Q}K$ for   $a, b>0$.
\vskip 1mm \noindent iv) 
Affine invariance: for $\phi\in GL(n)$, one has
\begin{align}\label{affine property of projection body for convex body}
\Pi_{p,Q}(\phi{}_{\bigcdot}K)=|\det (\phi)|^{\frac{1}{p}}\phi^{-\mathrm{T}}{}_{\bigcdot}\Pi_{p,Q}K.    
\end{align}
\end{proposition}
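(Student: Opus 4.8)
The plan is to prove all four items by moving between the two integral representations of $h_{\Pi_{p,Q}K}$: Definition \ref{h-pi-Q-K} (an integral over $\partial^* K$ against the weight $(z\bigcdot\nu_K(z))^{1-p}\,d\mathcal{H}^{n-1}$) and the volume form \eqref{equiv-h-pi-Q-K} (an integral over $K$ against $dy$). The boundary form is best for finiteness, positivity and sublinearity, while the volume form converts dilations and linear maps into clean changes of variables, making it ideal for iii) and iv).

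For i), finiteness is routine: since $Q\in\mathcal{K}_o^{1,m}$ there is $\varrho>0$ with $Q\subset\varrho B_2^m$, so $h_Q(w)\leq\varrho|w|$, and because $|\nu_K(z)|=1$ gives $|\nu_K^{\mathrm{T}}(z){}_{\bigcdot}\u|\leq|\u|=1$, the integrand of Definition \ref{h-pi-Q-K} is at most $\varrho^p(z\bigcdot\nu_K(z))^{1-p}\leq\varrho^p a_K^{1-p}$ by \eqref{z-cdot-nukz-range} (using $p\geq1$), which is integrable as $\mathcal{H}^{n-1}(\partial K)<\infty$ by \eqref{null-set}. The real content, and the main obstacle, is \emph{positivity}, since $h_Q$ may vanish in a half-space of directions. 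First I would rewrite $h_Q(\nu_K^{\mathrm{T}}(z){}_{\bigcdot}\u)=h_{\tilde Q_{\u}}(\nu_K(z))$, where $\tilde Q_{\u}=\{\u{}_{\bigcdot}y^{\mathrm{T}}:y\in Q\}\subset\Mn$; this follows from the identity $(\nu^{\mathrm{T}}{}_{\bigcdot}\u)\bigcdot y=\nu\bigcdot(\u{}_{\bigcdot}y^{\mathrm{T}})$. Since $\u\neq o$ the underlying linear map is nonzero, so $\tilde Q_{\u}$ contains some $z_0\neq o$; as $o\in Q$ forces $h_{\tilde Q_{\u}}\geq0$, we get $h_{\tilde Q_{\u}}(\nu)\geq(z_0\bigcdot\nu)_+$. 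Feeding this into Definition \ref{h-pi-Q-K} yields $h_{\Pi_{p,Q}K}(\u)^p\geq\int_{\partial^* K}(z_0\bigcdot\nu_K(z))_+^p(z\bigcdot\nu_K(z))^{1-p}\,d\mathcal{H}^{n-1}(z)=nV_n(K)\,h_{\Pi_\varphi K}(z_0)^p$, where the last equality identifies the right-hand side via \eqref{def-Orlicz-projection-LSB} for the Orlicz function $\varphi(t)=t_+^p$. This $\varphi$ is convex, vanishes at $0$, and is strictly increasing on $[0,\infty)$, so the recalled lower bound \eqref{Lower-bound-projection-body-LSB} gives $h_{\Pi_\varphi K}(z_0)>0$, whence $h_{\Pi_{p,Q}K}(\u)>0$.

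For ii), I would read Definition \ref{h-pi-Q-K} as the $L_p$-norm of $z\mapsto h_Q(\nu_K^{\mathrm{T}}(z){}_{\bigcdot}\x)$ with respect to the finite measure $d\mu=(z\bigcdot\nu_K(z))^{1-p}\,d\mathcal{H}^{n-1}$. Positive homogeneity $h_{\Pi_{p,Q}K}(c\x)=c\,h_{\Pi_{p,Q}K}(\x)$ for $c\geq0$ is immediate from $h_Q(c\,w)=c\,h_Q(w)$. Subadditivity follows by combining the sublinearity of the support function $h_Q$, which gives the pointwise bound $h_Q(\nu_K^{\mathrm{T}}{}_{\bigcdot}(\x+\y))\leq h_Q(\nu_K^{\mathrm{T}}{}_{\bigcdot}\x)+h_Q(\nu_K^{\mathrm{T}}{}_{\bigcdot}\y)$, with the Minkowski inequality in $L_p(\mu)$ (valid since $p\geq1$). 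Thus $h_{\Pi_{p,Q}K}$ is finite and sublinear, hence continuous, and strictly positive on $S^{nm-1}$ by i), so it is the support function of a convex body containing $o$ in its interior, i.e.\ $\Pi_{p,Q}K\in\mathcal{K}_{(o)}^{n,m}$.

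For iii) and iv) I would use \eqref{equiv-h-pi-Q-K}. The factor $b$ is trivial since $h_{bQ}=b\,h_Q$ by \eqref{hom-1-1}. For the dilation, the gauge satisfies $p_{aK}=a^{-1}p_K$, so $\nabla p_{aK}(y)=a^{-1}\nabla p_K(y)$; substituting $y=aw$, using that $\nabla p_K$ is $0$-homogeneous (as $p_K$ is $1$-homogeneous) together with $dy=a^n\,dw$, turns $h_{\Pi_{p,Q}(aK)}(\x)^p$ into $a^{n-p}h_{\Pi_{p,Q}K}(\x)^p$, which gives the exponent $\tfrac{n-p}{p}$. For iv), $p_{\phi K}(y)=p_K(\phi^{-1}{}_{\bigcdot}y)$, so the chain rule yields $\nabla p_{\phi K}^{\mathrm{T}}(y){}_{\bigcdot}\x=\nabla p_K^{\mathrm{T}}(\phi^{-1}{}_{\bigcdot}y){}_{\bigcdot}(\phi^{-1}{}_{\bigcdot}\x)$; the change of variables $w=\phi^{-1}{}_{\bigcdot}y$ (with $dy=|\det\phi|\,dw$) then gives $h_{\Pi_{p,Q}(\phi K)}(\x)^p=|\det\phi|\,h_{\Pi_{p,Q}K}(\phi^{-1}{}_{\bigcdot}\x)^p$. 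Finally, \eqref{h KB} and \eqref{hom-1-1} give $h_{|\det\phi|^{1/p}\phi^{-\mathrm{T}}{}_{\bigcdot}\Pi_{p,Q}K}(\x)=|\det\phi|^{1/p}h_{\Pi_{p,Q}K}(\phi^{-1}{}_{\bigcdot}\x)$, so the two support functions coincide and \eqref{affine property of projection body for convex body} follows. The only genuinely delicate step is the positivity in i); once the transformation rules for $p_K$ are in place, the scaling and invariance identities are bookkeeping.
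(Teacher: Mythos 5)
Your proposal is correct, and it follows the same skeleton as the paper's proof: finiteness from $Q\subset\varrho B_2^m$ together with the bounds \eqref{null-set}--\eqref{z-cdot-nukz-range}, positivity by minorizing $h_{\u{}_{\bigcdot}Q^{\mathrm{T}}}$ with the support function of a segment $[o,z_0]$ and invoking the Lin--Xi lemmas \eqref{def-Orlicz-projection-LSB} and \eqref{Lower-bound-projection-body-LSB}, Minkowski's inequality in $L_p$ for sublinearity, and changes of variables for iii) and iv). There are, however, three genuine differences in execution. For positivity in i), the paper first replaces the weight $(z\bigcdot\nu_K(z))^{1-p}$ by $b_K^{1-p}$, applies Jensen's inequality to pass from the $L_p$ to the $L_1$ average, and only then compares with the Orlicz projection body for $\varphi(t)=t_+$; you instead take $\varphi(t)=t_+^p$ (admissible, since it is convex, vanishes at $0$, is strictly increasing on $[0,\infty)$, and has $c_\varphi=1$), for which the implicit equation \eqref{def-Orlicz-projection-LSB} solves explicitly by $p$-homogeneity and gives the exact identity $\int_{\partial^* K}(z_0\bigcdot\nu_K(z))_+^p\,(z\bigcdot\nu_K(z))^{1-p}\,d\mathcal{H}^{n-1}(z)=nV_n(K)\,h_{\Pi_\varphi K}(z_0)^p$; this eliminates Jensen and the constants $b_K$, $V_{n-1}(\partial^* K)$ from the argument. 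For iii), you compute with the volume representation \eqref{equiv-h-pi-Q-K} and the $0$-homogeneity of $\nabla p_K$, while the paper substitutes directly in the boundary integral of Definition \ref{h-pi-Q-K}; both are short and correct. For iv), you apply the chain rule to $p_{\phi{}_{\bigcdot}K}=p_K\circ\phi^{-1}$ for arbitrary $\phi\in GL(n)$ in a single pass, whereas the paper, following Lin--Xi's formula \eqref{nabda-p-Tk}, first treats the case $|\det(\phi)|=1$ and then rescales via iii); your route is marginally more direct but rests on the same gradient transformation identity. One cosmetic point, present in both your argument and the paper's: \eqref{Lower-bound-projection-body-LSB} is stated for unit vectors, so applying it at $z_0\neq o$ (respectively at $\u{}_{\bigcdot}q_{\u}^{\mathrm{T}}$ in the paper) tacitly uses the $1$-homogeneity of $h_{\Pi_\varphi K}$, which follows at once from the defining equation \eqref{def-Orlicz-projection-LSB}.
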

\begin{proof}
\vskip 1mm \noindent i) Let $R_Q$ be the constant such that $Q\subset R_Q B_2^m$. Note that $|v^{\mathrm{T}}{}_{\bigcdot}\u|\leq 1$ for $v\in S^{n-1}$ and $\u\in S^{nm-1}$. Hence, there exists $y\in Q$ such that
\begin{align*}
h_Q(v^{\mathrm{T}}{}_{\bigcdot}\u)=y\bigcdot (v^{\mathrm{T}}{}_{\bigcdot}\u)\leq |y|\leq R_Q.
\end{align*}
Together with \eqref{volume-K}, \eqref{null-set}, \eqref{z-cdot-nukz-range}, Definition \ref{h-pi-Q-K} and  $Q\in\mathcal{K}_o^{1,m}$, one gets,  for any $\u\in S^{nm-1}$ and $p\geq 1$,   \begin{align*}
h_{\Pi_{p,Q}K}(\u)^p
&=\int_{\partial^* K} h_Q\left(\nu_K^{\mathrm{T}}(z) {}_{\bigcdot}\u\right)^p\big(z \bigcdot \nu_K(z)\big)^{1-p} d\mathcal{H}^{n-1}(z)\\
&\leq \int_{\partial^* K}   a_K^{-p}R_Q^p \ z \bigcdot \nu_K(z)d\mathcal{H}^{n-1}(z)\\
&=na_K^{-p}R_Q^p V_n(K)<\infty.\end{align*} 

We now show that  $h_{\Pi_{p,Q}K}(\u)>0$ for all $\u\in S^{nm-1}$. As $Q\in\mathcal{K}_o^{1,m}$, for each $\u\in S^{nm-1}$, there exists a point $q_{\u}\in Q$ such that $\u{}_{\bigcdot}q_{\u}^{\mathrm{T}}\ne o$. It follows from \eqref{h KB}, \eqref{null-set}, \eqref{z-cdot-nukz-range}, Definition \ref{h-pi-Q-K} and Jensen's inequality that
\begin{align*}
h_{\Pi_{p,Q}K}(\u)
&=\Big(\int_{\partial^* K} h_Q\big(\nu_K^{\mathrm{T}}(z) {}_{\bigcdot}\u\big)^p\big(z \bigcdot \nu_K(z)\big)^{1-p} d\mathcal{H}^{n-1}(z)\Big)^{\frac{1}{p}}\\
&\ge \Big(\int_{\partial^* K} h_Q\left(\nu_K^{\mathrm{T}}(z) {}_{\bigcdot}\u\right)^pb_K^{1-p} d\mathcal{H}^{n-1}(z)\Big)^{\frac{1}{p}}\\
&= b_K^{\frac{1-p}{p}}V_{n-1}(\partial^* K)^{\frac{1}{p}}\bigg(\int_{\partial^* K} h_{\u{}_{\bigcdot}Q^{\mathrm{T}}}\big(\nu_K(z) \big)^p \frac{d\mathcal{H}^{n-1}(z)}{V_{n-1}(\partial^* K)}\bigg)^{\frac{1}{p}}\\
&\ge b_K^{\frac{1-p}{p}}V_{n-1}(\partial^* K)^{\frac{1-p}{p}}\int_{\partial^* K} h_{\u{}_{\bigcdot}Q^{\mathrm{T}}}\big(\nu_K(z) \big) d\mathcal{H}^{n-1}(z)\\
&\ge b_K^{\frac{1-p}{p}}V_{n-1}(\partial^* K)^{\frac{1-p}{p}}\int_{\partial^* K} h_{[o,\u{}_{\bigcdot}q_{\u}^{\mathrm{T}}]}\big(\nu_K(z) \big) d\mathcal{H}^{n-1}(z)\\&=b_K^{\frac{1-p}{p}}V_{n-1}(\partial^* K)^{\frac{1-p}{p}}\int_{\partial^* K}  \Big(\big(\u{}_{\bigcdot}q_{\u}^{\mathrm{T}}\big) \bigcdot  \nu_K(z) \Big)_+ d\mathcal{H}^{n-1}(z)\\ 
&=b_K^{\frac{1-p}{p}}V_{n-1}(\partial^* K)^{\frac{1-p}{p}}h_{\Pi_{\varphi} K}(\u{}_{\bigcdot}q_{\u}^{\mathrm{T}}),
\end{align*}
where $h_{\Pi_{\varphi} K}(\bigcdot)$ is defined in \eqref{def-Orlicz-projection-LSB} with $\varphi(t)=t_+$ for $t\in \R$. Together with \eqref{Lower-bound-projection-body-LSB} and \eqref{c-varphi}, one has, $c_{\varphi}=1$ and
$$h_{\Pi K}(\u{}_{\bigcdot}q_{\u}^{\mathrm{T}})\ge \frac{1}{2nR_K}.$$
Therefore, $h_{\Pi_{p,Q}K}(\u)$ is strictly positive for every $\u\in S^{nm-1}$.

\vskip 2mm \noindent ii) 
The sub-additivity follows directly from the sub-linearity of $h_Q$ for $Q\in\mathcal{K}_{o}^{1,m}$ and the Minkowski inequality for $L_p$ norms: for $\x, \y\in\M$,  \begin{align*}
h_{\Pi_{p,Q}K}(\x+\y)&=\bigg(\int_{\partial^* K} h_Q\left(\nu_K^{\mathrm{T}}(z) {}_{\bigcdot} (\x+\y)\right)^p\big(z \bigcdot \nu_K(z)\big)^{1-p} d\mathcal{H}^{n-1}(z)\bigg)^{\frac{1}{p}}\\ &\leq \bigg(\int_{\partial^* K} \big[h_Q\left(\nu_K^{\mathrm{T}}(z) {}_{\bigcdot} \x\right)+h_Q\left(\nu_K^{\mathrm{T}}(z) {}_{\bigcdot} \y\right)\big]^p\big(z \bigcdot \nu_K(z)\big)^{1-p} d\mathcal{H}^{n-1}(z)\bigg)^{\frac{1}{p}}\\&\leq \bigg(\int_{\partial^* K} \big[h_Q\left(\nu_K^{\mathrm{T}}(z) {}_{\bigcdot} \x\right)\big]^p\big(z \bigcdot \nu_K(z)\big)^{1-p} d\mathcal{H}^{n-1}(z)\bigg)^{\frac{1}{p}}  \\  &\ \ \ +\bigg(\int_{\partial^* K} \big[h_Q\left(\nu_K^{\mathrm{T}}(z) {}_{\bigcdot} \y\right)\big]^p\big(z \bigcdot \nu_K(z)\big)^{1-p} d\mathcal{H}^{n-1}(z)\bigg)^{\frac{1}{p}}\\&= 
h_{\Pi_{p,Q}K}(\x)+h_{\Pi_{p,Q}K}(\y).
\end{align*} 
The homogeneity of degree $1$ of $h_{\Pi_{p,Q}K}(\bigcdot)$ can be obtained directly from that of $h_Q(\bigcdot)$. Thus, $h_{\Pi_{p,Q}K}(\bigcdot)$ is sublinear.
Note that a sublinear function uniquely determines a convex set. Together with i), one gets
$\Pi_{p,Q}K\in\mathcal{K}_{(o)}^{n,m}$.

\vskip 2mm \noindent iii) Let $a>0$, $b>0$ and let $z=ay$ for $y\in\partial K$. Thus, $z\in\partial (aK)$ and $\nu_{aK}(z)=\nu_K(y)$. It follows from \eqref{hom-1-1} and Definition \ref{h-pi-Q-K} that, for $\x\in \M$,
\begin{align*}
h_{\Pi_{p,bQ}aK}(\x)^p&=\int_{\partial^* (aK)} h_{bQ}\big(\nu_{aK}^{\mathrm{T}}(z) {}_{\bigcdot}\x\big)^p\big(z \bigcdot \nu_{aK}(z)\big)^{1-p} d\mathcal{H}^{n-1}(z) \\
&=\int_{\partial^* K} b^ph_{Q}\big(\nu_{K}^{\mathrm{T}}(y) {}_{\bigcdot}\x\big)^p\big(ay \bigcdot \nu_{K}(y)\big)^{1-p} a^{n-1}d\mathcal{H}^{n-1}(y) \\
&=b^pa^{n-p}h_{\Pi_{p,Q}K}(\x)^p=h_{\big(ba^{\frac{n-p}{p}}\Pi_{p,Q}K\big)}(\x)^p.
\end{align*}
This yields $\Pi_{p,bQ}(aK)=ba^{\frac{n-p}{p}}\Pi_{p,Q}K$ for   $a, b>0$.
\vskip 1mm \noindent iv) The result for $K\in\mathcal{K}_{o}^{n,1}$ has been established in \cite[Proposition 3.6]{Haddad-ye-lp-2025}. For $K\in\mathcal{L}_o^{n,1}$, we first consider the case $\phi\in GL(n)$ with $|\det(\phi)|=1$. Let $z\in \partial K$ and $\widetilde{z}=\phi{}_{\bigcdot}z$. By \cite[(6.4)]{lin and xi}, one gets,  if $\nabla p_K(z)$ exists, then
\begin{align}\label{nabda-p-Tk}
\nabla p_{\phi{}_{\bigcdot}K}(\widetilde{z})=\phi^{-\mathrm{T}}{}_{\bigcdot}\nabla p_K(z).
\end{align}
Recall that $\nabla p_K(z)$ exists for almost all $z\in\partial K$.
Combining with \eqref{h KB}, \eqref{equiv-h-pi-Q-K} and \eqref{nabda-p-Tk}, one gets, for $\x\in \M$,
\begin{align*}
h_{\Pi_{p,Q}\phi{}_{\bigcdot}K}(\x)&=\Big(n\int_{\phi{}_{\bigcdot}K} h_Q\Big(\nabla p_{\phi{}_{\bigcdot}K}^{\mathrm{T}}(\widetilde{z}) {}_{\bigcdot}\x\Big)^p d\widetilde{z}\Big)^{\frac{1}{p}}\\
&=\Big(n\int_{K} h_Q\Big(\big(\phi^{-\mathrm{T}}{}_{\bigcdot}\nabla p_K(z)\big)^{\mathrm{T}}{}_{\bigcdot}\x\Big)^p dz\Big)^{\frac{1}{p}}\\
&=\Big(n\int_K h_Q\Big(\nabla p_K(z)^{\mathrm{T}}{}_{\bigcdot}(\phi^{-1}{}_{\bigcdot}\x)\Big)^p dz\Big)^{\frac{1}{p}}\\
&=h_{\Pi_{p,Q}K}(\phi^{-1}{}_{\bigcdot}\x)=h_{\phi^{-\mathrm{T}}{}_{\bigcdot}\Pi_{p,Q}K}(\x).
\end{align*}
Thus, for $\phi\in GL(n)$ with $|\det(\phi)|=1$, one has $\Pi_{p,Q}\phi{}_{\bigcdot}K=\phi^{-\mathrm{T}}{}_{\bigcdot}\Pi_{p,Q}K.$

Now suppose $\phi\in GL(n)$. Then $\phi$ can be written as 
\begin{align*}
\phi=|\det(\phi)|^{\frac{1}{n}}\widehat{\phi},
\end{align*}
where $\widehat{\phi}\in GL(n)$ satisfies $|\det(\widehat{\phi})|=1$.
Together with the homogeneity that $\Pi_{p,Q}aK=a^{\frac{n-p}{p}}\Pi_{p,Q}K$ for $a>0$, one gets 
\begin{align*}
\Pi_{p,Q}\phi{}_{\bigcdot}K
&=\Pi_{p,Q}|\det(\phi)|^{\frac{1}{n}}\widehat{\phi}{}_{\bigcdot}K
=|\det(\phi)|^{\frac{n-p}{np}}\Pi_{p,Q}\widehat{\phi}{}_{\bigcdot}K\\
&=|\det(\phi)|^{\frac{n-p}{np}}\widehat{\phi}^{-\mathrm{T}}{}_{\bigcdot}\Pi_{p,Q}K=|\det(\phi)|^{\frac{1}{p}}\phi^{-\mathrm{T}}{}_{\bigcdot}\Pi_{p,Q}K.
\end{align*}
This completes the proof of \eqref{affine property of projection body for convex body}.
\end{proof}

With the help of $\Pi_{p,Q}K$, where $K\in\mathcal{L}_o^{n,1}$ is a Lipschitz star body about $o$, one can define the $m$th order $p$-integral affine surface area $\Phi_{p,Q}(K)$ as follows. 

\begin{definition}\label{def of Phi p Q}
Let $p\in [1,\infty)$, $Q\in\mathcal{K}_{o}^{1,m}$ and $K \in \mathcal{L}_o^{n,1}$. The $m$th order $p$-integral affine surface area of $K$ is defined by
\begin{align}\label{phi-p-Q-K-wrt-volume}
\Phi_{p,Q}(K)=\big(n m V_{n m}(\Pi_{p,Q}^{*} K)\big)^{-\frac{p}{nm}}=\Big(\int_{S^{nm-1}}h^{-nm}_{\Pi_{p,Q}K}(\u) d\u\Big)^{-\frac{p}{nm}}.
\end{align}
\end{definition} It follows from  \eqref{d n p Q} and \eqref{phi-p-Q-K-wrt-volume}  that 
\begin{align}\label{phi pq b2n}
\Phi_{p,Q}(B_2^n)=n\omega_n d_{n, p}(Q).
\end{align}
Let $\varphi_{\tau}$ be given in \eqref{varphi tau} with $|\tau|\leq 1$. If $m=1$ and 
$$Q=\Big[-\Big(\frac{1-\tau}{2}\Big)^{\frac{1}{p}},\ \Big(\frac{1+\tau}{2}\Big)^{\frac{1}{p}}\Big],$$ then  $h_Q^p(t)=\varphi_{\tau}(t)$ for $t\in\Mm$,
and $\Phi_{p,Q}(K)$ reduces to the general $p$-integral affine surface area in \cite[(5.7)]{Hong and Ye 2018}. Furthermore, for $\tau=0$ and $K\in\mathcal{K}_{o}^{n,1}$, it reduces to the $p$-integral affine surface area, see, e.g., \cite{lp proj ineq 1,Petty-projection-bodies,Petty isoperimetric problem}. 

To better understand the $m$th order $p$-integral affine surface area, we now provide some of its fundamental properties. It is well known that, for $K\in\mathcal{K}_{(o)}^{n,1}$ and  $\phi\in GL(n)$,
\begin{align}\label{ck-star}
(\phi{}_{\bigcdot}\, K)^*=\phi^{-\mathrm{T}}{}_{\bigcdot}\, K^*,
\end{align} 
where $K^*$ is the polar body of $K$ defined in \eqref{polar-body}.
\begin{proposition}\label{proposition-for-Phi}
Let $p \geq 1$, $Q\in\mathcal{K}_{o}^{1,m}$ and $K \in \mathcal{L}_o^{n,1}$. The following statements hold.
\vskip 1mm \noindent i)  
Homogeneity: $\Phi_{p, bQ}(aK)=b^pa^{n-p} \Phi_{p, Q}(K)$ for $a>0$ and $b>0$.
\vskip 1mm \noindent ii)  
Symmetry: $\Phi_{p, Q}(K)=\Phi_{p,-Q}(K)$.
\vskip 1mm \noindent iii)  
Concavity: for any $\lambda \in[0,1]$ and $Q_1, Q_2\in\mathcal{K}_{o}^{1,m}$,
$$
\Phi_{p, \lambda\bigcdot_p Q_1+_p(1-\lambda)\bigcdot_p  Q_2}(K) \geq \lambda  \Phi_{p, Q_1}(K)+(1-\lambda)  \Phi_{p, Q_2}(K),
$$ where $\lambda\bigcdot_p Q_1+_p(1-\lambda)\bigcdot_p  Q_2$ denotes the $L_p$ sum of $Q_1$ and $ Q_2$  defined in \eqref{lp-addition-with-lp-multi}. 

\vskip 1mm \noindent iv)  
Affine invariance:   
\begin{align*}
\Phi_{p, Q}(\phi{}_{\bigcdot}K)=|\det (\phi)|^{\frac{n-p}{n}} \Phi_{p, Q}(K) \ \ \mathrm{for\ } \ \ \phi \in G L(n).
\end{align*}
\end{proposition}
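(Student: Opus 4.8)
The plan is to inherit each of the four properties of $\Phi_{p,Q}(\bigcdot)$ from the corresponding property of the $(L_p,Q)$-projection body established in Proposition \ref{proposition-h-pi-Q-p-K}, transported through the representation $\Phi_{p,Q}(K)=\big(nmV_{nm}(\Pi_{p,Q}^{*}K)\big)^{-p/nm}$ in \eqref{phi-p-Q-K-wrt-volume} and the volume-of-polar formula \eqref{VnmKstar}. For the homogeneity in i), I would start from $\Pi_{p,bQ}(aK)=ba^{(n-p)/p}\Pi_{p,Q}K$ (Proposition \ref{proposition-h-pi-Q-p-K} iii). Since $(cL)^{*}=c^{-1}L^{*}$ for any $c>0$ and $L\in\mathcal{K}_{(o)}^{n,m}$, this gives $V_{nm}\big(\Pi_{p,bQ}^{*}(aK)\big)=\big(ba^{(n-p)/p}\big)^{-nm}V_{nm}\big(\Pi_{p,Q}^{*}K\big)$, and substituting into \eqref{phi-p-Q-K-wrt-volume} and simplifying the exponents yields $\Phi_{p,bQ}(aK)=b^{p}a^{n-p}\Phi_{p,Q}(K)$.

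For the symmetry in ii), the key identity is $h_{-Q}(y)=h_{Q}(-y)$, which through Definition \ref{h-pi-Q-K} gives $h_{\Pi_{p,-Q}K}(\x)=h_{\Pi_{p,Q}K}(-\x)$ for all $\x\in\M$. Plugging this into the integral form of \eqref{phi-p-Q-K-wrt-volume} and applying the change of variable $\u\mapsto-\u$, which preserves the spherical measure $d\u$, I would conclude $\Phi_{p,-Q}(K)=\Phi_{p,Q}(K)$. For the concavity in iii), I would first observe that the $L_p$-combination \eqref{lp-addition-with-lp-multi} passes linearly through Definition \ref{h-pi-Q-K}: writing $Q=\lambda\bigcdot_{p}Q_1+_{p}(1-\lambda)\bigcdot_{p}Q_2$, one has $h_{\Pi_{p,Q}K}(\x)^{p}=\lambda\,h_{\Pi_{p,Q_1}K}(\x)^{p}+(1-\lambda)\,h_{\Pi_{p,Q_2}K}(\x)^{p}$. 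The asserted inequality then follows from the reverse Minkowski inequality applied to $\int_{S^{nm-1}}(\bigcdot)^{-nm/p}d\u$, in exactly the same fashion as in the proof of Proposition \ref{proposition 1} ii), with the roles of $\|h_{Q_i}(\nabla_{\u}f)\|_{p}^{p}$ there now played by $h_{\Pi_{p,Q_i}K}(\u)^{p}$.

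For the affine invariance in iv), I would combine the affine invariance $\Pi_{p,Q}(\phi{}_{\bigcdot}K)=|\det(\phi)|^{1/p}\,\phi^{-\mathrm{T}}{}_{\bigcdot}\Pi_{p,Q}K$ (Proposition \ref{proposition-h-pi-Q-p-K} iv) with the polar rule \eqref{ck-star}. Taking polars and using $(\phi^{-\mathrm{T}})^{-\mathrm{T}}=\phi$ gives $\Pi_{p,Q}^{*}(\phi{}_{\bigcdot}K)=|\det(\phi)|^{-1/p}\,\phi{}_{\bigcdot}\Pi_{p,Q}^{*}K$. The decisive bookkeeping point is that the columnwise action $\phi{}_{\bigcdot}(\bigcdot)$ on $\M\cong\mathbb{R}^{nm}$ has Jacobian $|\det(\phi)|^{m}$ (the same factor already appearing in the proof of Proposition \ref{proposition 1} vi), so that $V_{nm}\big(\Pi_{p,Q}^{*}(\phi{}_{\bigcdot}K)\big)=|\det(\phi)|^{-nm/p}|\det(\phi)|^{m}V_{nm}\big(\Pi_{p,Q}^{*}K\big)$. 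Inserting this into \eqref{phi-p-Q-K-wrt-volume}, the outer exponent $-p/(nm)$ collapses the two determinant powers to $|\det(\phi)|^{1-p/n}=|\det(\phi)|^{(n-p)/n}$, as claimed.

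The only genuinely non-mechanical step is the determinant accounting in iv): one must simultaneously track the uniform dilation factor $|\det(\phi)|^{-1/p}$ coming from the polar of a scaled body and the Jacobian $|\det(\phi)|^{m}$ of the columnwise linear map on $\M$, and verify that, after raising to the power $-p/(nm)$, these combine precisely to the exponent $(n-p)/n$. By contrast, the analytic content of iii) is already isolated in the earlier reverse Minkowski argument, so it transfers here with no new difficulty, and i) and ii) reduce to elementary scaling and reflection symmetries of polar volumes.
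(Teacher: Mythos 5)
Your proposal is correct and follows essentially the same route as the paper: each part is transported from the corresponding property of $\Pi_{p,Q}K$ in Proposition \ref{proposition-h-pi-Q-p-K} through the polar-volume representation \eqref{phi-p-Q-K-wrt-volume}, with the reverse Minkowski inequality handling the concavity in iii) and the determinant bookkeeping in iv) matching the paper's computation $V_{nm}\big(\Pi_{p,Q}^{*}(\phi{}_{\bigcdot}K)\big)=|\det(\phi)|^{\frac{pm-nm}{p}}V_{nm}(\Pi_{p,Q}^{*}K)$ exactly. No gaps.
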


\begin{proof}
\vskip 1mm \noindent i) The homogeneity of $\Phi_{p,Q}(K)$ follows directly from Proposition \ref{proposition-h-pi-Q-p-K} iii) and Definition \ref{def of Phi p Q}.

\vskip 1mm \noindent ii) 
By \eqref{h KB} and Definition \ref{h-pi-Q-K}, for any $\u\in S^{nm-1}$ and $p\geq 1$, one has \begin{align*}
h_{\Pi_{p,-Q}K}(\u)^p
&=\int_{\partial^* K} h_{-Q}\left(\nu_K^{\mathrm{T}}(z) {}_{\bigcdot}\u\right)^p\big(z \bigcdot \nu_K(z)\big)^{1-p} d\mathcal{H}^{n-1}(z)\\
&=\int_{\partial^* K} h_{-Q{}_{\bigcdot}\u^{\mathrm{T}}}\left(\nu_K^{\mathrm{T}}(z) \right)^p\big(z \bigcdot \nu_K(z)\big)^{1-p} d\mathcal{H}^{n-1}(z)\\
&=\int_{\partial^* K} h_{Q{}_{\bigcdot}(-\u)^{\mathrm{T}}}\left(\nu_K^{\mathrm{T}}(z) \right)^p\big(z \bigcdot \nu_K(z)\big)^{1-p} d\mathcal{H}^{n-1}(z)\\
&=\int_{\partial^* K} h_Q\left(\nu_K^{\mathrm{T}}(z) {}_{\bigcdot}(-\u)\right)^p\big(z \bigcdot \nu_K(z)\big)^{1-p} d\mathcal{H}^{n-1}(z)=h_{\Pi_{p,Q}K}(-\u)^p.
\end{align*}
Combining with Definition \ref{def of Phi p Q}, one has
\begin{align*}
\Phi_{p,-Q}(K)&=\Big(\int_{S^{nm-1}}h_{\Pi_{p,-Q}K}(\u)^{-nm} d\u\Big)^{-\frac{p}{nm}}=\Big(\int_{S^{nm-1}}h_{\Pi_{p,Q}K}(-\u)^{-nm} d\u\Big)^{-\frac{p}{nm}}\\
&=\Big(\int_{S^{nm-1}}h_{\Pi_{p,Q}K}(\v)^{-nm} d\v\Big)^{-\frac{p}{nm}}=\Phi_{p,Q}(K).
\end{align*}
This completes the proof of ii).
\vskip 1mm \noindent iii)
By \eqref{lp-addition-with-lp-multi} and Definition \ref{h-pi-Q-K}, one obtains that, for all $\u\in S^{nm-1}$, $p\geq 1$, $\lambda \in[0,1]$ and $Q_1, Q_2\in\mathcal{K}_{o}^{1,m}$,  \begin{align*}
&h_{\Pi_{p,\lambda\bigcdot_p Q_1+_p(1-\lambda)\bigcdot_p  Q_2}K}(\u)^p\\
&=\int_{\partial^* K} h_{\lambda\bigcdot_p Q_1+_p(1-\lambda)\bigcdot_p  Q_2}\left(\nu_K^{\mathrm{T}}(z) {}_{\bigcdot}\u\right)^p\big(z \bigcdot \nu_K(z)\big)^{1-p} d\mathcal{H}^{n-1}(z)\\
&=\int_{\partial^* K}\Big(\lambda h_{Q_1}\left(\nu_K^{\mathrm{T}}(z) {}_{\bigcdot}\u\right)^p +(1-\lambda)h_{Q_2}\left(\nu_K^{\mathrm{T}}(z) {}_{\bigcdot}\u\right)^p\Big)\big(z \bigcdot \nu_K(z)\big)^{1-p} d\mathcal{H}^{n-1}(z)\\
&=\lambda \int_{\partial^* K}h_{Q_1}\left(\nu_K^{\mathrm{T}}(z) {}_{\bigcdot}\u\right)^p\big(z \bigcdot \nu_K(z)\big)^{1-p} d\mathcal{H}^{n-1}(z)\\
&~~+(1-\lambda)\int_{\partial^* K}h_{Q_2}\left(\nu_K^{\mathrm{T}}(z) {}_{\bigcdot}\u\right)^p\big(z \bigcdot \nu_K(z)\big)^{1-p} d\mathcal{H}^{n-1}(z)\\
&=\lambda h_{\Pi_{p,Q_1}K}(\u)^p+(1-\lambda) h_{\Pi_{p,Q_2}K}(\u)^p.
\end{align*}
Together with the reverse Minkowski inequality and Definition \ref{def of Phi p Q}, one gets
\begin{align*}
\Phi_{p,\lambda\bigcdot_p Q_1+_p(1-\lambda)\bigcdot_p  Q_2}(K)&=\Big(\int_{S^{nm-1}}h_{\Pi_{p,\lambda\bigcdot_p Q_1+_p(1-\lambda)\bigcdot_p  Q_2}K}(\u)^{-nm} d\u\Big)^{-\frac{p}{nm}}\\
&=\Big(\int_{S^{nm-1}} \Big(\lambda h_{\Pi_{p,Q_1}K}(\u)^p+(1-\lambda) h_{\Pi_{p,Q_2}K}(\u)^p\Big)^{-\frac{nm}{p}} d\u\Big)^{-\frac{p}{nm}}\\
&\ge \lambda \Big(\!\int_{S^{nm-1}}\!\!\!\!h_{\Pi_{p,Q_1}K}(\u)^{-nm}d\u\!\Big)^{-\frac{p}{nm}}\!\!\!+(1-\lambda)\Big(\!\int_{S^{nm-1}}\!\!\!\!\!h_{\Pi_{p,Q_2}K}(\u)^{-nm}d\u\!\Big)^{-\frac{p}{nm}}\\
&=\lambda\Phi_{p,Q_1}(K)+(1-\lambda)\Phi_{p,Q_2}(K).
\end{align*}
This completes the proof of the desired concavity.

\vskip 1mm \noindent iv)
It follows from \eqref{ck-star} and Proposition \ref{proposition-h-pi-Q-p-K} iv) that, for any $\phi \in G L(n)$, $p\ge 1$ and $K \in \mathcal{L}_o^{n,1}$, 
$$
\Pi_{p,Q}^{*} (\phi{}_{\bigcdot}K)=|\det(\phi)|^{-\frac{1}{p}}\phi{}_{\bigcdot}\Pi_{p,Q}^{*}K,
$$
which further gives
$$V_{n m}\big(\Pi_{p,Q}^{*} (\phi{}_{\bigcdot}K)\big)=|\det(\phi)|^{\frac{pm-nm}{p}}V_{n m}(\Pi_{p,Q}^{*}K).$$
Together with \eqref{phi-p-Q-K-wrt-volume}, one has 
\begin{align*}
\Phi_{p, Q}(\phi{}_{\bigcdot}K)=\Big(n m V_{n m}\big(\Pi_{p,Q}^{*} (\phi{}_{\bigcdot}K)\big)\Big)^{-\frac{p}{nm}}=|\det (\phi)|^{\frac{n-p}{n}} \Phi_{p, Q}(K),   
\end{align*}
as desired. This completes the proof.

\end{proof}

Notice that there shall have no translation invariance for $\Phi_{p,Q}(\bigcdot)$. The inequality to compare  the $m$th order $p$-affine capacity $C_{p,Q}(K)$ and the $m$th order $p$-integral affine surface area $\Phi_{p,Q}(K)$ for $p\in(1,n)$ and $K\in\mathcal{L}_o^{n,1}$ is established below.  
\begin{theorem}\label{theorem c and phi when p ge 1}
Let $p\in(1,n)$, $Q\in\mathcal{K}_o^{1,m}$ and  $K\in\mathcal{L}_o^{n,1}$. Then,
\begin{align}\label{p ge 1 C and Phi}
\frac{C_{p, Q}(K)}{C_{p, Q}(B_2^n)} \leq \frac{\Phi_{p, Q}(K)}{\Phi_{p, Q}(B_2^n)},
\end{align}
with equality if $K$ is an origin-symmetric ellipsoid. 
\end{theorem}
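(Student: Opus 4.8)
The plan is to bound $C_{p,Q}(K)$ from above by a single well-chosen competitor adapted to the star body $K$, and to recognize the resulting quantity as a multiple of $\Phi_{p,Q}(K)$ through the gauge representation \eqref{equiv-h-pi-Q-K}. Since $K\in\mathcal{L}_o^{n,1}$, its gauge function $p_K$ of \eqref{PL} is Lipschitz, $1$-homogeneous, vanishes at $o$, and satisfies $\{x:p_K(x)\le s\}=sK$. For a non-increasing Lipschitz profile $g:[0,\infty)\to[0,1]$ with $g\equiv 1$ on $[0,1]$ and compact support, I set $f=g(p_K)$. Then $f$ is Lipschitz with compact support, hence $f\in W_0^{1,p}$, and $f\ge 1$ on $K$, so $f\in\mathscr{B}(K)$ and $f$ is admissible in \eqref{CPQ=BK}.

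The computational core is to evaluate $\|h_Q(\nabla_{\u}f)\|_p$. By the chain rule $\nabla f(x)=g'(p_K(x))\nabla p_K(x)$, so $\nabla_{\u}f(x)=g'(p_K(x))\,\nabla p_K(x)^{\mathrm T}{}_{\bigcdot}\u$; as $g'\le 0$ and $h_Q$ is positively $1$-homogeneous, $h_Q(\nabla_{\u}f(x))=|g'(p_K(x))|\,h_Q(-\nabla p_K(x)^{\mathrm T}{}_{\bigcdot}\u)$. Because $\nabla p_K$ is $0$-homogeneous, the substitution $y=sz$ in \eqref{equiv-h-pi-Q-K} gives $\int_{sK}h_Q(-\nabla p_K(x)^{\mathrm T}{}_{\bigcdot}\u)^p\,dx=\tfrac{s^n}{n}\,h_{\Pi_{p,Q}K}(-\u)^p$. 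Differentiating in $s$ and decomposing $\int_{\Mn}$ along the level sets of the Lipschitz map $p_K$ (via the coarea formula \eqref{Federer-coarea-formula}), I obtain $\|h_Q(\nabla_{\u}f)\|_p^p=A(g)\,h_{\Pi_{p,Q}K}(-\u)^p$, where $A(g)=\int_0^{\infty}|g'(s)|^p s^{n-1}\,ds$ does not depend on $\u$. Plugging this into the capacity functional, using the reflection invariance $\u\mapsto-\u$ of the spherical measure together with \eqref{phi-p-Q-K-wrt-volume}, the entire $\u$-integral collapses and yields $\big(\int_{S^{nm-1}}\|h_Q(\nabla_{\u}f)\|_p^{-nm}\,d\u\big)^{-p/nm}=A(g)\,\Phi_{p,Q}(K)$. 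Hence $C_{p,Q}(K)\le A(g)\,\Phi_{p,Q}(K)$ for every admissible $g$.

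It remains to make $A(g)$ small. The profile $g_0(s)=\min\{1,\,s^{-(n-p)/(p-1)}\}$ gives $A(g_0)=\big(\tfrac{n-p}{p-1}\big)^{p-1}$ by a one-line integration, using $p\in(1,n)$ so that the integral converges. Since $g_0$ need not have compact support, I replace it by truncations $g_R$ that coincide with $g_0$ on $[0,R]$ and descend to $0$ on $[R,2R]$; the extra gradient contribution is of order $R^{(p-n)/(p-1)}\to 0$, so $A(g_R)\to\big(\tfrac{n-p}{p-1}\big)^{p-1}$. Letting $R\to\infty$ gives $C_{p,Q}(K)\le\big(\tfrac{n-p}{p-1}\big)^{p-1}\Phi_{p,Q}(K)$. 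Comparing with the exact values \eqref{cpqb} and \eqref{phi pq b2n}, which give $C_{p,Q}(B_2^n)/\Phi_{p,Q}(B_2^n)=\big(\tfrac{n-p}{p-1}\big)^{p-1}$, rearranges this bound precisely into \eqref{p ge 1 C and Phi}. For the equality case, the affine invariance of $C_{p,Q}(\bigcdot)$ (Proposition \ref{proposition 1}, vi) and of $\Phi_{p,Q}(\bigcdot)$ (Proposition \ref{proposition-for-Phi}, iv) shows that both sides of \eqref{p ge 1 C and Phi} are unchanged under $K\mapsto\phi{}_{\bigcdot}K$ for $\phi\in GL(n)$; since equality is trivial for $B_2^n$, it holds for every origin-symmetric ellipsoid $\phi{}_{\bigcdot}B_2^n$.

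The main obstacle is the exact identity $\|h_Q(\nabla_{\u}f)\|_p^p=A(g)\,h_{\Pi_{p,Q}K}(-\u)^p$. It rests on matching the gauge representation \eqref{equiv-h-pi-Q-K} of $\Pi_{p,Q}K$ with the gradient of the radial competitor, and on justifying the layer decomposition over the merely Lipschitz level sets of $p_K$, where $\nabla p_K$ exists only almost everywhere; once this reduction is secured, the clean separation of the radial factor $A(g)$ from the directional factor $h_{\Pi_{p,Q}K}(-\u)$ makes the remaining steps routine.
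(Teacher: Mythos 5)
Your proof is correct, and in outline it is the paper's own strategy: the same family of radial competitors $f=g(p_K)$ with the power profile $g_0(s)=\min\{1,\,s^{-(n-p)/(p-1)}\}$, the same separation of the radial factor $A(g_0)=\big(\tfrac{n-p}{p-1}\big)^{p-1}$ from the directional factor $h_{\Pi_{p,Q}K}(-\u)$, and the same endgame via \eqref{cpqb}, \eqref{phi pq b2n} and affine invariance for the equality case. The implementations differ in two ways worth noting. First, the key identity $\|h_Q(\nabla_{\u}f)\|_p^p=A(g)\,h_{\Pi_{p,Q}K}(-\u)^p$: the paper runs the coarea formula \eqref{Federer-coarea-formula} with respect to $f$ itself, identifies $[f]_t=sK$ as in \eqref{ft equals sK}, and invokes the Ludwig--Xiao--Zhang level-set relations \eqref{nu-K-nu-F} to convert $|\nabla f|$ and $\nu_{[f]_t}$ into boundary data of $K$, landing on Definition \ref{h-pi-Q-K}; you instead run coarea with respect to the gauge $p_K$ and match directly with the volume representation \eqref{equiv-h-pi-Q-K}, using only the $0$-homogeneity of $\nabla p_K$. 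The computations are equivalent, but yours needs less boundary machinery, which is a genuine convenience in the merely Lipschitz setting. Second, and more importantly, your truncation step is not cosmetic: the paper takes the untruncated competitor ($f=\rho_K^{(n-p)/(p-1)}$ off $K$) and asserts $f\in\mathscr{A}(K)\subset W_0^{1,p}$, but this $f$ decays like $|x|^{-(n-p)/(p-1)}$ and therefore fails to lie in $L_p$ (hence in $W_0^{1,p}$ as defined in Section \ref{notation}) whenever $p^2\ge n$ (e.g.\ $n=9$, $p=3$); only its gradient is $p$-integrable. Your compactly supported profiles $g_R$, together with the $O\big(R^{(p-n)/(p-1)}\big)$ error estimate showing $A(g_R)\to\big(\tfrac{n-p}{p-1}\big)^{p-1}$, repair exactly this admissibility issue, so on this point your write-up is more careful than the paper's own proof.
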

\begin{proof} The proof shall follow the techniques in \cite[Theorem 5.3]{Hong and Ye 2018} and \cite[P.962]{Xiao 2016}. 
For $p\in(1, n)$, define $g:[0, \infty)\to \mathbb{R}$ by
\begin{align*}
g(s)=\begin{cases}
1, &\ \ \mathrm{for}\ \ s\in[0,1],\\
s^{\frac{n-p}{1-p}}, &\ \ \mathrm{for}\ \ s\in(1,\infty).
\end{cases}
\end{align*} Clearly, $0<g(s)< 1$ for $s>1$, and  $g$ is strictly decreasing on $(1, \infty)$. As $\rho_K (x)<1$ if $x\notin K$ and $\rho_K(x)\ge1$  for $x\in K,$ one has,  
\begin{align*}
f(x)=g\Big(\frac{1}{\rho_K(x)}\Big)=\begin{cases}
1, &\ \ \mathrm{for}\ \ x\in K,\\
\rho_K(x)^\frac{n-p}{p-1}, &\ \ \mathrm{for}\ \ x\notin K.
\end{cases}   
\end{align*} Clearly, $\mathbf{1}_K(x) \leq f(x)\leq 1$ for all $x\in\Mn$ and hence  $f\in\mathscr{A}(K)$. 

Let $s\in (1,\infty)$ and $t=g(s)$ for $s>1$, then $t\in(0,1)$.  As $g$ is strictly decreasing on $(1,\infty)$, by \eqref{level-set-f}, one gets,
\begin{align}\label{ft equals sK}
[f]_{t}=\bigg\{x \in \Mn: g\Big(\frac{1}{\rho_K(x)}\Big) \ge g(s)\bigg\}=\bigg\{x \in  \Mn: \frac{1}{\rho_K(x)}\leq s\bigg\}=s K.
\end{align} It can be checked (see e.g., \cite[Lemma 6]{Ludwig Xiao and Zhang 2011}) that, for almost all $x \in \partial[f]_t=\partial (sK)$, there exists $z \in \partial K$ with $x=sz$ (and hence $s=\frac{1}{\rho_K(x)}$), such that
\begin{align}\label{nu-K-nu-F}
|\nabla f(x)|=\frac{\left|g^{\prime}(s)\right|}{|z \bigcdot \nu_K(z)|} \ \ \mathrm{and}\ \  \nu_K(z)=\nu_{[f]_{t}}(x)=-\frac{\nabla f(x)}{|\nabla f(x)|}.
\end{align} 
Combining with \eqref{hom-1-1}, \eqref{nu-f-t}, \eqref{Federer-coarea-formula}, \eqref{support function of ho lp projection body}, \eqref{null-set}, \eqref{z-cdot-nukz-range}, Definition \ref{h-pi-Q-K}, \eqref{ft equals sK} and \eqref{nu-K-nu-F}, one has 
\begin{align*}
\|h_Q(\nabla_{\u} f)\|_p^p &=\int_{\Mn}h_Q\big(\nabla f^{\mathrm{T}}(x) {}_{\bigcdot} \u\big)^p dx\\
&=\int_0^1\int_{\partial[f]_t}h_Q\big(\nabla f^{\mathrm{T}}(x) {}_{\bigcdot} \u\big)^p |\nabla f(x)|^{-1} d\mathcal{H}^{n-1}(x) dt \\
&=\int_0^1\int_{\partial[f]_t} h_Q\big(\!-\nu_{[f]_t}^{\mathrm{T}}(x){}_{\bigcdot}\u\big)^p|\nabla f(x)|^{p-1} d\mathcal{H}^{n-1}(x) dt \\
&=\int_1^{\infty}\int_{\partial^* K} h_Q\big(\nu_K^{\mathrm{T}}(z) {}_{\bigcdot}(-\u)\big)^p|g^{\prime}(s)|^p \big(z \bigcdot \nu_K(z)\big)^{1-p} s^{n-1} d\mathcal{H}^{n-1}(z) ds\\
&=\bigg(\int_1^{\infty}|g^{\prime}(s)|^p s^{n-1} ds\bigg) \bigg(\int_{\partial^* K} h_Q\big(\nu_K^{\mathrm{T}}(z) {}_{\bigcdot}(-\u)\big)^p\big(z \bigcdot \nu_K(z)\big)^{1-p} d\mathcal{H}^{n-1}(z)\bigg)\\
&= h_{\Pi_{p,Q}K}(-\u)^p\Big(\frac{n-p}{p-1}\Big)^p\int_1^{\infty}s^{\frac{n-1}{1-p}}ds=\Big(\frac{n-p}{p-1}\Big)^{p-1}h_{\Pi_{p,Q}K}(-\u)^p.
\end{align*}
Together with Definition \ref{def of cpqk with ek} and Definition \ref{def of Phi p Q}, one gets
\begin{align*}
C_{p, Q}(K)&\leq\Big(\int_{S^{nm-1}} \| h_Q\left(\nabla_{\u} f\right)\|_p^{-n m} d \u\Big)^{-\frac{p}{n m}}\\
&\leq \Big(\frac{n-p}{p-1}\Big)^{p-1}\Big(\int_{S^{nm-1}}h^{-nm}_{\Pi_{p,Q}K}(\u) d\u\Big)^{-\frac{p}{nm}}
=\Big(\frac{n-p}{p-1}\Big)^{p-1} \Phi_{p,Q}(K).
\end{align*}
By \eqref{cpqb} and \eqref{phi pq b2n}, one gets \eqref{p ge 1 C and Phi}. Clearly, the equality in the above  inequality  holds if $K=B_2^n$. Due to the affine invariance of $C_{p, Q}(K)$ (see Proposition \ref{proposition 1}) and $\Phi_{p, Q}(K)$ (see Proposition \ref{proposition-for-Phi}), equality then  holds in \eqref{p ge 1 C and Phi} if $K$ is an origin-symmetric ellipsoid.
\end{proof}

We now provide the inequality between  $C_{1,Q}(\bigcdot )$ and  $\Phi_{1,Q}(\bigcdot)$.  
\begin{theorem}\label{theorem c and phi when p equal 1}
Let $Q\in\mathcal{K}_o^{1,m}$ and  $K\in\mathcal{L}_o^{n,1}$. Then
\begin{align}\label{c1Qk and Phi 1 QK}
\frac{C_{1, Q}(K)}{C_{1, Q}(B_2^n)} \leq \frac{\Phi_{1, Q}(K)}{\Phi_{1, Q}(B_2^n)}, 
\end{align}
with equality if $K$ is an origin-symmetric ellipsoid.  
\end{theorem}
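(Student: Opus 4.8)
The plan is to reduce \eqref{c1Qk and Phi 1 QK} to the single inequality $C_{1,Q}(K)\le \Phi_{1,Q}(K)$. Indeed, by \eqref{C1QB2n} and \eqref{phi pq b2n} (with $p=1$) we have $C_{1,Q}(B_2^n)=\Phi_{1,Q}(B_2^n)=n\omega_n d_{n,1}(Q)$, so the normalizing constants on the two sides of \eqref{c1Qk and Phi 1 QK} coincide and the claim collapses to $C_{1,Q}(K)\le\Phi_{1,Q}(K)$. One might hope to deduce this by letting $p\to1^+$ in \eqref{p ge 1 C and Phi}, since $\bigl(\tfrac{n-p}{p-1}\bigr)^{p-1}\to1$ and $\Phi_{p,Q}(K)\to\Phi_{1,Q}(K)$; however, Proposition \ref{c1qk ge} only provides $C_{1,Q}(K)\ge\limsup_{p\to1^+}C_{p,Q}(K)$, which is the \emph{wrong} direction for bounding $C_{1,Q}(K)$ from above. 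Consequently I would instead construct an explicit admissible test function for the $p=1$ capacity, mirroring the proof of Theorem \ref{theorem c and phi when p ge 1} but replacing the degenerating power profile $g(s)=s^{(n-p)/(1-p)}$ (whose exponent blows up as $p\to1^+$) by a thin linear cutoff.

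Concretely, for $\delta>0$ I would set $f_\delta(x)=g_\delta\bigl(p_K(x)\bigr)$, where $p_K=1/\rho_K$ is the gauge from \eqref{PL} and $g_\delta(s)=1$ for $s\in[0,1]$, $g_\delta(s)=1-(s-1)/\delta$ for $s\in(1,1+\delta)$, and $g_\delta(s)=0$ for $s\ge1+\delta$. Since $K\in\mathcal{L}_o^{n,1}$ has Lipschitz boundary, $p_K$ is Lipschitz away from $o$ and $f_\delta$ is Lipschitz with compact support $(1+\delta)K$; moreover $f_\delta=1$ on $K$ and $0\le f_\delta\le1$, so $f_\delta\in W_0^{1,1}$ and $f_\delta\in\mathscr{A}(K)$. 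As in \eqref{ft equals sK}, for $t\in(0,1)$ the superlevel set is the dilate $[f_\delta]_t=s_tK$ with $s_t=1+\delta(1-t)$, and by \eqref{nu-K-nu-F} almost every boundary point $x\in\partial(s_tK)$ has the form $x=s_tz$ with $z\in\partial^*K$, $\nu_{[f_\delta]_t}(x)=\nu_K(z)$. Applying Federer's coarea formula \eqref{Federer-coarea-formula} exactly as in the $p\in(1,n)$ computation, the $|\nabla f_\delta|$ factors cancel and, using $\mathcal{H}^{n-1}(\partial K\setminus\partial^*K)=0$ from \eqref{null-set}, the change of variables $d\mathcal{H}^{n-1}(x)=s_t^{\,n-1}d\mathcal{H}^{n-1}(z)$, the homogeneity of $h_Q$, and Definition \ref{h-pi-Q-K}, I expect to obtain
\begin{align*}
\bigl\|h_Q(\nabla_{\u}f_\delta)\bigr\|_1=\Bigl(\int_0^1 s_t^{\,n-1}\,dt\Bigr)\,h_{\Pi_{1,Q}K}(-\u)=\frac{(1+\delta)^n-1}{n\delta}\,h_{\Pi_{1,Q}K}(-\u),\qquad \u\in S^{nm-1},
\end{align*}
where the constant is independent of $\u$ and coincides with the factor appearing in \eqref{CpQB less than infty}.

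From here the conclusion is immediate: by Definition \ref{def of cpqk with ek}, the rotational invariance $\u\mapsto-\u$ of $d\u$, and the second formula in Definition \ref{def of Phi p Q}, admissibility of $f_\delta$ gives
\begin{align*}
C_{1,Q}(K)\le\Bigl(\int_{S^{nm-1}}\bigl\|h_Q(\nabla_{\u}f_\delta)\bigr\|_1^{-nm}\,d\u\Bigr)^{-\frac{1}{nm}}=\frac{(1+\delta)^n-1}{n\delta}\,\Phi_{1,Q}(K),
\end{align*}
and letting $\delta\to0^+$ (equivalently, taking the infimum over $\delta>0$) yields $C_{1,Q}(K)\le\Phi_{1,Q}(K)$, which is \eqref{c1Qk and Phi 1 QK}. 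For the equality case, note that both sides are trivially equal when $K=B_2^n$; then for an origin-symmetric ellipsoid $K=\phi{}_{\bigcdot}B_2^n$ with $\phi\in GL(n)$, the affine invariance of $C_{1,Q}(\bigcdot)$ (Proposition \ref{proposition 1}\,vi)) and of $\Phi_{1,Q}(\bigcdot)$ (Proposition \ref{proposition-for-Phi}\,iv)) produce the same factor $|\det(\phi)|^{(n-1)/n}$ on both sides, so equality persists. The main obstacle I anticipate is not the algebra but the careful justification that the smooth-level-set identities \eqref{ft equals sK}--\eqref{nu-K-nu-F} and the coarea formula remain valid for the merely Lipschitz function $f_\delta$ built from a Lipschitz star body; this is precisely the setting handled in \cite[Lemma 6]{Ludwig Xiao and Zhang 2011} and by the structure results \eqref{null-set}--\eqref{z-cdot-nukz-range}, so I would invoke those to license the computation.
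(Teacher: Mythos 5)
Your proof is correct, and it reduces the theorem in the same way the paper does (both sides have the same normalization $C_{1,Q}(B_2^n)=\Phi_{1,Q}(B_2^n)=n\omega_n d_{n,1}(Q)$, so everything collapses to $C_{1,Q}(K)\le\Phi_{1,Q}(K)$, with the ellipsoid equality case handled by affine invariance exactly as in the paper), but your construction of the admissible function is genuinely different. The paper builds the test function from the Euclidean distance function, $f_{\varepsilon}(x)=1-\dist(x,K)/\varepsilon$ on the collar $\{0<\dist(x,K)<\varepsilon\}$, computes $\nabla f_{\varepsilon}$ via the (a.e.\ unique) nearest-point projection from \cite[P.~195]{Zhang 1999}, expands the collar volume element as $dx=d\mathcal{H}^{n-1}(x')\,dt+o(t)\,dt$, and then needs Fatou's lemma to pass the limit $\varepsilon\to0$ inside the sphere integral. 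You instead build the test function from the gauge, $f_\delta=g_\delta\circ p_K$ with a linear profile on the multiplicative shell between $K$ and $(1+\delta)K$, which makes your argument a verbatim transplant of the paper's own proof of Theorem \ref{theorem c and phi when p ge 1}: the level sets are dilates $s_tK$, the coarea formula applies through \cite[Lemma 6]{Ludwig Xiao and Zhang 2011} and \eqref{null-set}--\eqref{z-cdot-nukz-range} (the same ingredients the paper already invokes for $p\in(1,n)$, so your worry about Lipschitz regularity is covered), and since at $p=1$ the factor $|\nabla f_\delta|^{p-1}$ disappears you obtain the exact identity
\begin{align*}
\bigl\|h_Q(\nabla_{\u}f_\delta)\bigr\|_1=\frac{(1+\delta)^n-1}{n\delta}\,h_{\Pi_{1,Q}K}(-\u)
\end{align*}
with a constant independent of $\u$. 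What this buys you is that the sphere integral factorizes exactly for each fixed $\delta$, so no interchange of limit and integral (Fatou) and no nearest-point uniqueness discussion are needed; the whole limit is the elementary scalar limit $\frac{(1+\delta)^n-1}{n\delta}\to1$. What the paper's route buys is that the collar construction directly exhibits the boundary integral $\int_{\partial^* K}h_Q(\nu_K^{\mathrm{T}}\!{}_{\bigcdot}\u)\,d\mathcal{H}^{n-1}$ as a Minkowski-content-type limit, without needing the dilation structure of star bodies; but for this theorem your version is arguably cleaner and unifies the $p=1$ case with the paper's $p\in(1,n)$ argument. Your preliminary observation that one cannot simply let $p\to1^+$ in \eqref{p ge 1 C and Phi} because Proposition \ref{c1qk ge} bounds $C_{1,Q}(K)$ from \emph{below} is also correct and is precisely why a separate construction is required.
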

\begin{proof} By \eqref{C1QB2n} and \eqref{phi pq b2n}, one has $C_{1, Q}(B_2^n)=\Phi_{1, Q}(B_2^n)$. Thus,  \eqref{c1Qk and Phi 1 QK} holds if $C_{1,Q}(K)\leq \Phi_{1,Q}(K).$ To this end, for small $\varepsilon>0$,  let 
\begin{align*}
K_{\varepsilon}=\Big\{x\in\Mn: 0<\dist(x, K)<\varepsilon\Big\}. 
\end{align*}  We also define $f_{\varepsilon}\in\mathscr{D}(K)$ by \begin{align*}
f_{\varepsilon}(x)=\begin{cases}
1-\frac{\dist(x, K)}{\varepsilon}, &\ \ \mathrm{if}\ \ \dist(x, K)<\varepsilon,\\
0, &\ \ \mathrm{otherwise}.
\end{cases}   
\end{align*} By \cite[P. 195]{Zhang 1999}, for each $x\in K_{\varepsilon}$, there exists a unique $x^{\prime} \in \partial K$ such that
$\mathrm{dist}(x, K)=|x'-x|,$ and hence if $x\in K_{\varepsilon}$,
\begin{align}\label{nabda-f-varepsilon}
\nabla f_{\varepsilon}(x)=\frac{\nu_K(x^{\prime})}{\varepsilon},
\end{align}
where $\nu_K(x')=\frac{x'-x}{|x'-x|}$. Moreover, $\nabla f_{\varepsilon}(x)=0$ if $x\notin \overline{K}_{\varepsilon}$. For each $x\in K_{\varepsilon}$, define $t=\dist(x, K)$ and then $0<t<\varepsilon$. It follows from   \eqref{hom-1-1}, \eqref{null-set} and \eqref{nabda-f-varepsilon} that
\begin{align*}
\lim_{\varepsilon\to 0}\int_{\Mn}h_Q(\nabla f_{\varepsilon}(x)^{\mathrm{T}}{}_{\bigcdot}\u)dx
=&\lim_{\varepsilon\to 0}\int_{K_{\varepsilon}}h_Q(\varepsilon^{-1} \nu_K(x^{\prime})^{\mathrm{T}}{}_{\bigcdot}\u)dx\\
=&\lim_{\varepsilon\to 0}\bigg(\varepsilon^{-1}\int_0^{\varepsilon}\int_{\partial K}h_Q\big(\nu_K(x')^{\mathrm{T}}{}_{\bigcdot}\u\big)d\mathcal{H}^{n-1}(x')dt+ \varepsilon^{-1}o(\varepsilon)\bigg)\\
=&\int_{\partial^* K}h_Q\big(\nu_K(x')^{\mathrm{T}}{}_{\bigcdot}\u\big)d\mathcal{H}^{n-1}(x^{\prime}),
\end{align*}where we have used $dx=d\mathcal{H}^{n-1}(x')d t+o(t)dt$, with $o(t)$ a higher-order term that tends to $0$ as $t\rightarrow0$, and satisfying $$\frac{1}{\varepsilon} \int_0^{\varepsilon} o(t) d t=\frac{1}{\varepsilon} o(\varepsilon)\rightarrow 0 \ \mathrm{as}\ \ \varepsilon\rightarrow 0.$$  By Definition \ref{def of cpqk with ek}, \eqref{null-set}, Definition \ref{h-pi-Q-K}, Definition \ref{def of Phi p Q} and Fatou's lemma, one gets,
\begin{align*} 
C_{1,Q}(K) & \leq \limsup _{\varepsilon \to 0}\bigg(\int_{S^{nm-1}}\Big(\int_{\Mn} h_Q\big(\nabla f_{\varepsilon}(x)^{\mathrm{T}}{}_{\bigcdot}\u\big) d x\Big)^{-nm} d\u\bigg)^{-\frac{1}{nm}} \\
& =\bigg(\liminf _{\varepsilon \to 0} \int_{S^{nm-1}}\Big(\int_{\Mn} h_Q\big(\nabla f_{\varepsilon}(x)^{\mathrm{T}}{}_{\bigcdot}\u\big) d x\Big)^{-nm} d\u\bigg)^{-\frac{1}{nm}}\\
&\leq\bigg(\int_{S^{nm-1}}\lim_{\varepsilon \to 0} \Big(\int_{\Mn} h_Q\big(\nabla f_{\varepsilon}(x)^{\mathrm{T}}{}_{\bigcdot}\u\big) d x\Big)^{-nm} d\u\bigg)^{-\frac{1}{nm}}\\
&=\bigg(\int_{S^{nm-1}} \Big(\int_{\partial^* K}h_Q\big(\nu_K(x')^{\mathrm{T}}{}_{\bigcdot}\u\big)d\mathcal{H}^{n-1}(x')\Big)^{-nm} d\u\bigg)^{-\frac{1}{nm}}\\
&=\bigg(\int_{S^{nm-1}} h_{\Pi_{1,Q}K}(\u)^{-nm} d\u\bigg)^{-\frac{1}{nm}}=\Phi_{1,Q}(K).
\end{align*} 
Thus, the desired inequality \eqref{c1Qk and Phi 1 QK} holds. Moreover, equality holds in \eqref{c1Qk and Phi 1 QK} if $K=B_2^n$. By the affine invariance of $C_{1,Q}(K)$ and $\Phi_{1,Q}(K)$, equality holds in \eqref{c1Qk and Phi 1 QK} if $K$ is an origin-symmetric ellipsoid.  \end{proof}

The inequality between $\Phi_{p,Q}(K)$ and $S_p(K)$ will be established in the following theorem. Recall from \eqref{bar T u} that $\phi{}_{\bigcdot}\u=(\phi{}_{\bigcdot}u_1,\cdots,\phi{}_{\bigcdot}u_m)$ for $\phi\in GL(n)$ and  $\u=(u_1,\cdots, u_m)\in \M$ with $u_i\in\Mn$ for $i=1,\cdots,m$. Denote by $O(n)$ the orthogonal group, consisting of all $n\times n$ orthogonal matrices. That is, if $\phi\in O(n)$, then $\phi$ is an $n\times n$ matrix such that $\phi^{\mathrm{T}}_{\bigcdot} \phi=\phi_{\bigcdot} \phi^{\mathrm{T}}$ is the identity matrix on $M_{n, 1}(\R)$. The natural measure on $O(n)$ is the normalized Haar measure denoted by $\mu$. Note that $\mu$ is a rotational invariant probability measure. Thus,  for any $\u\in S^{nm-1}$,
\begin{align}\label{O-N}
\int_{O(n)} h_Q\Big(\big(\phi^{\mathrm{T}}{}_{\bigcdot} v\big)^{\mathrm{T}} {}_{\bigcdot}\u\Big)^p d\mu(\phi)
\end{align}
is independent of $v\in S^{n-1}$. 
\begin{proposition}\label{theorem phi and s}
For $p\in [1,n)$ and $K$ in $\mathcal{L}_o^{n,1}$, one has
\begin{align*}
\frac{\Phi_{p,Q}(K)}{\Phi_{p,Q}(B_2^n)}\leq\frac{S_p(K)}{S_p(B_2^n)},
\end{align*} with equality if $K$ is an origin-symmetric ball. 
\end{proposition}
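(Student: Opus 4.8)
The plan is to reduce the stated inequality to the equivalent form $\Phi_{p,Q}(K)\le d_{n,p}(Q)\,S_p(K)$. Indeed, since $\Phi_{p,Q}(B_2^n)=n\omega_n d_{n,p}(Q)$ by \eqref{phi pq b2n} and $S_p(B_2^n)=n\omega_n$ directly from \eqref{lp-surface-area-of-star body} (as $z\bigcdot\nu_{B_2^n}(z)=1$ on $S^{n-1}$), the claim is exactly $\Phi_{p,Q}(K)\le d_{n,p}(Q)S_p(K)$. Raising to the power $-nm/p$ and using \eqref{phi-p-Q-K-wrt-volume} turns this into the lower bound
$$\int_{S^{nm-1}} h_{\Pi_{p,Q}K}(\u)^{-nm}\,d\u \;\ge\; \Big(\tfrac{S_p(K)}{n\omega_n}\Big)^{-\frac{nm}{p}}\int_{S^{nm-1}} h_{\Pi_{p,Q}B_2^n}(\u)^{-nm}\,d\u ,$$
which is what I would establish.

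The engine is the rotational averaging encoded in \eqref{O-N}. First I would use that $\u\mapsto\phi{}_{\bigcdot}\u$ (see \eqref{bar T u}) is a measure-preserving bijection of $S^{nm-1}$ for each $\phi\in O(n)$, so $\int_{S^{nm-1}} h_{\Pi_{p,Q}K}(\u)^{-nm}\,d\u$ is unchanged upon replacing $\u$ by $\phi{}_{\bigcdot}\u$. Averaging this identity against the Haar probability measure $\mu$ and applying Tonelli's theorem rewrites the integral as $\int_{S^{nm-1}}\big(\int_{O(n)} h_{\Pi_{p,Q}K}(\phi{}_{\bigcdot}\u)^{-nm}\,d\mu(\phi)\big)\,d\u$. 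For the inner average I would use Definition \ref{h-pi-Q-K} together with the elementary identity $\nu_K^{\mathrm T}(z){}_{\bigcdot}(\phi{}_{\bigcdot}\u)=(\phi^{\mathrm T}{}_{\bigcdot}\nu_K(z))^{\mathrm T}{}_{\bigcdot}\u$ to express $h_{\Pi_{p,Q}K}(\phi{}_{\bigcdot}\u)^p$ as an integral over $\partial^*K$.

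Now the two analytic inputs enter. Since $-nm/p<0$, the map $t\mapsto t^{-nm/p}$ is convex on $(0,\infty)$, so Jensen's inequality for $\mu$ gives
$$\int_{O(n)} h_{\Pi_{p,Q}K}(\phi{}_{\bigcdot}\u)^{-nm}\,d\mu(\phi)\;\ge\;\Big(\int_{O(n)} h_{\Pi_{p,Q}K}(\phi{}_{\bigcdot}\u)^{p}\,d\mu(\phi)\Big)^{-\frac{nm}{p}},$$
legitimate because $h_{\Pi_{p,Q}K}(\bigcdot)>0$ by Proposition \ref{proposition-h-pi-Q-p-K} i). Interchanging the $O(n)$ and $\partial^*K$ integrations, the crucial point is that $\int_{O(n)} h_Q\big((\phi^{\mathrm T}{}_{\bigcdot}\nu_K(z))^{\mathrm T}{}_{\bigcdot}\u\big)^p\,d\mu(\phi)$ is \emph{independent of} $z$ (as $\nu_K(z)\in S^{n-1}$) by \eqref{O-N}, and equals $\tfrac{1}{n\omega_n}h_{\Pi_{p,Q}B_2^n}(\u)^p$ by \eqref{support function of ho lp projection body}. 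The leftover factor $\int_{\partial^*K}(z\bigcdot\nu_K(z))^{1-p}\,d\mathcal H^{n-1}(z)$ is precisely $S_p(K)$, so the $O(n)$-average collapses to $\tfrac{S_p(K)}{n\omega_n}h_{\Pi_{p,Q}B_2^n}(\u)^p$. Substituting back and integrating over $\u\in S^{nm-1}$ yields the displayed lower bound, hence the inequality.

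For the equality case I would observe that when $K=B_2^n$ the change of variable $w=\phi^{\mathrm T}{}_{\bigcdot}z$ shows $h_{\Pi_{p,Q}B_2^n}(\phi{}_{\bigcdot}\u)$ is constant in $\phi$, so the Jensen step is saturated; homogeneity (Proposition \ref{proposition-h-pi-Q-p-K} iii)) then propagates equality to every origin-centered ball $rB_2^n$. The main obstacle is not any single estimate but the bookkeeping: one must apply Jensen with the negative exponent in the correct direction and verify carefully that \eqref{O-N} removes the $z$-dependence so the surface measure factors out cleanly as $S_p(K)$. The finiteness and positivity of $h_{\Pi_{p,Q}K}$ from Proposition \ref{proposition-h-pi-Q-p-K}, together with nonnegativity of $h_Q$, ensure all applications of Tonelli and Jensen are valid.
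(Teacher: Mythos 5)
Your proposal is correct and follows essentially the same route as the paper's own proof: both reduce the claim to $\Phi_{p,Q}(K)\le d_{n,p}(Q)S_p(K)$, average over $O(n)$ using the rotational invariance of the spherical measure on $S^{nm-1}$, apply Jensen's inequality for the convex map $t\mapsto t^{-nm/p}$ with respect to the Haar probability measure, and then use Fubini together with the $v$-independence of the integral in \eqref{O-N} to factor out $S_p(K)$ and identify the remaining average with $\tfrac{1}{n\omega_n}h_{\Pi_{p,Q}B_2^n}(\u)^p$. Your treatment of the equality case (saturation of Jensen for $K=B_2^n$ plus homogeneity for $rB_2^n$) is slightly more explicit than the paper's, but the argument is the same.
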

\begin{proof} It follows from Definition \ref{h-pi-Q-K} and Definition \ref{def of Phi p Q} that, for $\phi\in O(n)$,  one has 
\begin{align*}
\Phi_{p,Q}(K)^{-\frac{nm}{p}}
&=\int_{S^{nm-1}}\Big(\int_{\partial^* K} h_Q\big(\nu_K^{\mathrm{T}}(z) {}_{\bigcdot}\u\big)^p\big(z \bigcdot \nu_K(z)\big)^{1-p} d \mathcal{H}^{n-1}(z) \Big)^{-\frac{nm}{p}}d\u\\
&=\int_{S^{nm-1}}\Big(\int_{\partial^* K} h_Q\big(\nu_K^{\mathrm{T}}(z) {}_{\bigcdot}(\phi{}_{\bigcdot}\u)\big)^p\big(z \bigcdot \nu_K(z)\big)^{1-p} d \mathcal{H}^{n-1}(z) \Big)^{-\frac{nm}{p}}d\u\\
&=\int_{S^{nm-1}}\bigg(\int_{\partial^* K} h_Q\Big(\big(\phi^{\mathrm{T}}{}_{\bigcdot}\nu_K(z)\big)^{\mathrm{T}}{}_{\bigcdot}\u\Big)^p\big(z \bigcdot \nu_K(z)\big)^{1-p} d \mathcal{H}^{n-1}(z)\bigg)^{-\frac{nm}{p}}d\u,
\end{align*} where the second equality follows from the rotational invariance of the spherical measure on $S^{nm-1}$. 

By \eqref{lp-surface-area-of-star body}, \eqref{null-set}, \eqref{z-cdot-nukz-range}, Definition \ref{def of Phi p Q}, Fubini theorem, Jensen's inequality and the fact that the integral \eqref{O-N} is independent of $v$,  one gets
\begin{align*}
\Phi_{p,Q}(K)^{-\frac{nm}{p}}\!\!
&=\int_{O(n)}\int_{S^{nm-1}}\bigg(\int_{\partial^* K} h_Q\Big(\big(\phi^{\mathrm{T}}{}_{\bigcdot}\nu_K(z)\big)^{\mathrm{T}}{}_{\bigcdot}\u\Big)^p\big(z \bigcdot \nu_K(z)\big)^{1-p} d \mathcal{H}^{n-1}(z) \bigg)^{-\frac{nm}{p}}d\u d\mu(\phi)\\
&=\int_{S^{nm-1}}\int_{O(n)} \bigg(\int_{\partial^* K} h_Q\Big(\big(\phi^{\mathrm{T}}{}_{\bigcdot}\nu_K(z)\big)^{\mathrm{T}}{}_{\bigcdot}\u\Big)^p\big(z \bigcdot \nu_K(z)\big)^{1-p} d \mathcal{H}^{n-1}(z)\bigg)^{-\frac{nm}{p}}d\mu(\phi)d\u\\
&\ge\int_{S^{nm-1}}\bigg(\int_{O(n)} \int_{\partial^* K} h_Q\Big(\big(\phi^{\mathrm{T}}{}_{\bigcdot}\nu_K(z)\big)^{\mathrm{T}}{}_{\bigcdot} \u\Big)^p\big(z \bigcdot \nu_K(z)\big)^{1-p} d \mathcal{H}^{n-1}(z)d\mu(\phi)\bigg)^{-\frac{nm}{p}}d\u \\
&=\int_{S^{nm-1}}\bigg( \int_{\partial^* K} \int_{O(n)}h_Q\Big(\big(\phi^{\mathrm{T}}{}_{\bigcdot}\nu_K(z)\big)^{\mathrm{T}}{}_{\bigcdot} \u\Big)^pd\mu(\phi) \big(z \bigcdot \nu_K(z)\big)^{1-p} d \mathcal{H}^{n-1}(z) \bigg)^{-\frac{nm}{p}}d\u\\
&=\bigg(\!\!\int_{\partial^* K} \big(z \bigcdot \nu_K(z)\big)^{1-p} d \mathcal{H}^{n-1}(z)\!\!\bigg)^{-\frac{nm}{p}}\!\!\!\!\int_{S^{nm-1}}\!\!\!\bigg( \int_{O(n)}h_Q\Big(\big(\phi^{\mathrm{T}}{}_{\bigcdot}\nu_K(z)\big)^{\mathrm{T}}{}_{\bigcdot} \u\Big)^pd\mu(\phi) \!\! \bigg)^{-\frac{nm}{p}}\!\!\!d\u\\
&=S_p(K)^{-\frac{nm}{p}}\int_{S^{nm-1}}\bigg(\frac{1}{n\omega_n} \int_{S^{n-1}}h_Q\big(v^{\mathrm{T}}{}_{\bigcdot} \u\big)^pdv\bigg)^{-\frac{nm}{p}}d\u,
\end{align*}
where the last equation follows from \eqref{lp-surface-area-of-star body} and \cite[(3.3)]{mth order affine polya szego principle}.
Together with \eqref{VnmKstar},  \eqref{support function of ho lp projection body}, \eqref{d n p Q} and \eqref{lp-surface-area-of-star body}, one has, 
\begin{align*}
\Phi_{p,Q}(K)^{-\frac{nm}{p}}
&\geq S_p(K)^{-\frac{nm}{p}}(n\omega_n)^{\frac{nm}{p}}\int_{S^{nm-1}}h_{\Pi_{p,Q}B_2^n}(\u)^{-nm} d\u\\
&=S_p(K)^{-\frac{nm}{p}}(n\omega_n)^{\frac{nm}{p}} nmV_{nm}(\Pi^*_{p,Q}B_2^n)\\
&=S_p(K)^{-\frac{nm}{p}} d_{n,p}(Q)^{-\frac{nm}{p}}.
\end{align*} Taking the power of $-\frac{p}{mn}$ from both sides, one gets $$\Phi_{p,Q}(K)\leq S_p(K)d_{n,p}(Q).$$  The desired inequality follows from a rearrangement and  \eqref{phi pq b2n}.
Clearly, equality holds if $K$ is an origin-symmetric ball. This completes the proof.
\end{proof}

Theorem \ref{theorem with volume-1}, along with Theorem \ref{theorem c and phi when p ge 1}, Theorem \ref{theorem c and phi when p equal 1} and Lemma \ref{theorem phi and s} yields that, for $p\in [1,n)$, $K\in\mathcal{L}_o^{n,1}$ and $Q\in\mathcal{K}_o^{1,m}$, one gets:
\begin{align*}
\left(\frac{V_n(K)}{V_n(B_2^n)}\right)^{\frac{1}{n}}\leq \left(\frac{C_{p, Q}(K)}{C_{p, Q}(B_2^n)}\right)^{\frac{1}{n-p}}
\leq \left(\frac{\Phi_{p, Q}(K)}{\Phi_{p, Q}(B_2^n)}\right)^{\frac{1}{n-p}}\leq \left(\frac{S_p(K)}{S_p(B_2^n)}\right)^{\frac{1}{n-p}}.
\end{align*}

\vskip 2mm \noindent  {\bf Acknowledgement.}  The
research of DY was supported by a NSERC grant, Canada.  

\vspace{16pt}

\noindent Xia Zhou, Department of Mathematics and Statistics, Memorial University of Newfoundland, St. John’s, Newfoundland, A1C 5S7, Canada\\
\textit{Email address}: xiaz@mun.ca
\vspace{8pt}

\noindent Deping Ye, Department of Mathematics and Statistics, Memorial University of Newfoundland, St. John’s, Newfoundland, A1C 5S7, Canada\\
\textit{Email address}: deping.ye@mun.ca
\vspace{8pt}

\end{document}